\newif\iflight
\newif\ifnocomment
\newif\ifcolor
\titleformat{\chapter}[display]
{\LARGE\bfseries\scshape}
{{\chaptertitlename}\ \thechapter}
{.5em}
{\huge}
\titleformat{\section}[hang]
{\large\bfseries\scshape}
{\thesection.}{0.5em}{}
\setlist[enumerate]{leftmargin=7.5mm,itemsep=-1mm,topsep=0pt}
\newtheoremstyle{newdefinition}%
{\topsep}
{\topsep}
{\normalfont}
{0pt}
{\bfseries\scshape}
{.}
{ }
{}
\newtheoremstyle{newplain}%
{\topsep}
{\topsep}
{\itshape}
{0pt}
{\bfseries\scshape}
{.}
{ }
{}
\newtheoremstyle{remark}%
{\topsep}
{\topsep}
{\normalfont}
{0pt}
{\scshape}
{.}
{ }
{}
\renewenvironment{proof}[1][\proofname]{\par
	\pushQED{\qed}%
	\normalfont \topsep6\p@\@plus6\p@\relax
	\trivlist
	\item[\hskip\labelsep
	\bfseries\scshape
	#1\@addpunct{.}]\ignorespaces
}{%
	\popQED\endtrivlist\@endpefalse
}
\let\oldbibliography\thebibliography
\renewcommand{\thebibliography}[1]{%
	\oldbibliography{#1}%
	\setlength{\itemsep}{0pt}%
}
\theoremstyle{newdefinition} \newtheorem{definition}{Definition}[section]
\theoremstyle{newdefinition} \newtheorem{remark}[definition]{Remark}
\theoremstyle{newplain} \newtheorem{theorem}[definition]{Theorem}
\theoremstyle{newplain} \newtheorem{lemma}[definition]{Lemma}
\theoremstyle{newplain} \newtheorem{fact}[definition]{Proposition}
\theoremstyle{newplain} \newtheorem{corollary}[definition]{Corollary}
\theoremstyle{newplain}
\theoremstyle{newplain}\newtheorem{observation}[definition]{Observation}
\newcommand{\ray}[2]{(#1#2)^{\infty}}
\newcommand{\edgeinf}[6]
{\mathcal{E}_{\infty}(\ray{#1}{#2},\ray{#3}{#4},#5,#6)}
\newcommand{\edge}[4]
{\mathcal{E}(#1,#2,#3,#4)}
\newcommand{\bd}[1]{\partial_{\infty}#1}
\newcommand{\catz}{\mathtt{CAT(0)}}
\newcommand{\bdcatz}[1]{\partial_{\catz}#1}
\newcommand{\cay}[2]{\mathrm{Cay}(#1,#2)}
\newcommand{\intr}{\mathrm{int}}
\newcommand{\ka}{\mathrm{K}}
\newcommand{\katt}{\ka_{3,3}}
\newcommand{\kacz}{\ka_4}
\newcommand{\kapi}{\ka_5}
\newcommand{\dra}{\mathfrak{d}}
\newcommand{\square}{\Box}
\newcommand{\Cone}[1]{\mathrm{Cone}(#1)}
\newcommand{\triang}{\mathcal{T}}
\newcommand{\tra}[2]{\mathfrak{t}(#1,#2)}
\newcommand{\rys}[1]{\mathcal{R}_{#1}}
\newcommand{\gra}[1]{\mathcal{G}_{#1}}
\newcommand{\wiea}{{\Large $\bullet$}}
\newcommand{\sasa}{{\Large $\circ$}}
\newcommand{\wieb}{{\small $\spadesuit$}}
\newcommand{\sasb}{{\small $\heartsuit$}}
\newcommand{\en}{$\mathfrak{i}$}
\newcommand{\ky}{$\mathfrak{c}$}
\newcommand{\sg}{$\mathrm{SG}$}
\newcommand{\Z}{\mathbb{Z}}
\newcommand{\nb}[1]{\textcolor{blue}{\bf\large \#}\footnote{\textcolor{blue}{#1}}}
\newcommand{\nr}[1]{\textcolor{red}{\bf\large \#}\footnote{#1}}
\definecolor{darkgreen}{rgb}{0.05, 0.50, 0.06}
\newcommand{\ngr}[1]{\textcolor{darkgreen}{\bf\large \#}\footnote{\textcolor{darkgreen}{#1}}}
\definecolor{purp}{rgb}{0.53, 0.38, 0.56}
\newcommand{\npu}[1]{\textcolor{orange}{\bf\large
		\#}\footnote{#1}}
\newcommand{\nv}[1]{\textcolor{violet}{\bf\large
		\#}\footnote{\textcolor{violet}{#1}}}
\definecolor{afb}{rgb}{0.36, 0.54, 0.66}
\newcommand{\nby}[1]{\textcolor{afb}{\bf\large
		\#}\footnote{\textcolor{afb}{#1}}}
\definecolor{bro}{rgb}{0.59, 0.29, 0.0}
\newcommand{\nbz}[1]{\textcolor{bro}{\bf\large
		\#}\footnote{\textcolor{bro}{#1}}}
\definecolor{dmb}{rgb}{0.55, 0.0, 0.0}
\newcommand{\nce}[1]{\textcolor{dmb}{\bf\large
		\#}\footnote{\textcolor{dmb}{#1}}}
\renewcommand{\nb}[1]{}
\renewcommand{\nr}[1]{}
\renewcommand{\ngr}[1]{}
\renewcommand{\npu}[1]{}
\renewcommand{\nv}[1]{}
\renewcommand{\nby}[1]{}
\renewcommand{\nbz}[1]{}
\renewcommand{\nce}[1]{}
\newcommand{\cone}{black}
\newcommand{\ctwo}{gray}
\newcommand{\cthree}{blue}
\newcommand{\cfour}{orange}
\newcommand{\cfive}{darkgreen}
\newcommand{\csix}{dmb}
\newcommand{\mone}[1]{#1}
\newcommand{\tone}[1]{\textbf{\textcolor{\cone}{#1}}}
\newcommand{\ttwo}[1]{\textcolor{\ctwo}{#1}}
\newcommand{\tthree}[1]{\textcolor{\cthree}{#1}}
\newcommand{\tfour}[1]{\textcolor{\cfour}{#1}}
\newcommand{\tfive}[1]{\textcolor{\cfive}{#1}}
\newcommand{\tsix}[1]{\textcolor{\csix}{#1}}
\newcommand{\mone}[1]{#1}%
\newcommand{\tone}[1]{#1}
\newcommand{\ttwo}[1]{#1}
\newcommand{\tthree}[1]{#1}
\newcommand{\tfour}[1]{#1}
\newcommand{\tfive}[1]{#1}
\newcommand{\tsix}[1]{#1}
\newcommand{\eone}{\tone{\textbf{(s1)}}}
\newcommand{\etwo}{\ttwo{\textbf{(s2)}}}
\newcommand{\ethree}{\tthree{\textbf{(s3)}}}
\newcommand{\efour}{\tfour{\textbf{(s4)}}}
\newcommand{\efive}{\tfive{\textbf{(s5)}}}
\newcommand{\esix}{\tsix{\textbf{(s6)}}}
\newcommand{\eone}{\tone{(s1)}}
\newcommand{\etwo}{\ttwo{(s2)}}
\newcommand{\ethree}{\tthree{(s3)}}
\newcommand{\efour}{\tfour{(s4)}}
\newcommand{\efive}{\tfive{(s5)}}
\newcommand{\esix}{\tsix{(s6)}}
\title{{\sc\bfseries Right-angled Coxeter groups\\ with Menger curve boundary}}
\author{Daniel Danielski\\{\footnotesize Daniel.Danielski@math.uni.wroc.pl}\\{\small Mathematical Institute, University of Wroc\l{}aw}\\{\small pl. Grunwaldzki 2/4 50-384 Wroc\l{}aw}}
\date{\vspace{-5ex}}
\begin{document}
\maketitle

\begin{abstract}
We find a sufficient condition for a nerve of a hyperbolic right-angled Coxeter group, under which the boundary of the group is homeomorphic to the Menger curve.
We show that this condition is satisfied by many triangulations of surfaces with boundary and other 2-complexes, as well as by some triangulations of disks $D^n$ for arbitrary $n\geq3$.
\end{abstract}

\section{Introduction}\label{c:wstep}
In this paper we address the question of when the boundary of a hyperbolic right-angled Coxeter group is homeomorphic to the Menger curve.
In \cite{DaHaWa19} the question 
is answered in the case when the nerve of such a group is a graph, and in \cite{HaHrSa19} the non-hyperbolic case is considered. 
Also, note that the Menger curve is the generic case for a Gromov boundary, \cite{DaGuP11}.

We show the following result, which describes a sufficient condition for a nerve of a right-angled Coxeter group, so that the group has the Menger curve as the boundary. 
We apply this result to families of nerves that are not graphs.
Non-standard terms appearing in the statement of this result are explained in Remark \ref{u:tmain}.

\begin{theorem}\label{t:main}
Let $N$ be the nerve of 
a hyperbolic right-angled Coxeter group $W_N$.
Assume that $N$ is inseparable, 
not a simplex,
\sg-non-planar,
and for each $n>1$ and any simplex $\Delta\subseteq N$ we have 
$H^n(N)=0$ and  $H^n(N\setminus\Delta)=0$. 
Then the boundary $\bd{W_N}$ is homeomorphic to the Menger curve.
\end{theorem}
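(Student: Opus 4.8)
\emph{Strategy.} The plan is to verify Anderson's topological characterisation of the Menger curve: a space is homeomorphic to it exactly when it is compact, metrisable, connected, locally connected, $1$-dimensional, has no local cut points, and contains no nonempty open subset that embeds in the plane. Since $W_N$ is hyperbolic, $\bd{W_N}$ is its Gromov boundary, which is automatically compact and metrisable; the remaining five properties must be read off from the combinatorics of $N$, and nowhere-planarity is by far the hardest.

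\emph{Dimension.} Coxeter groups are virtually torsion-free, so fix a torsion-free finite-index $\Gamma\le W_N$; it has the same boundary and acts geometrically on the (contractible, hyperbolic) Davis complex $\Sigma_N$. By Bestvina--Mess, $\dim\bd{W_N}=\mathrm{cd}(\Gamma)-1$, and $\mathrm{cd}(\Gamma)$ equals the top degree in which $H^\ast_c(\Sigma_N)$ is nonzero. Davis's formula writes $H^n_c(\Sigma_N)$ as a direct sum, over $w\in W_N$, of $\tilde H^{n-1}$ of the full subcomplex of $N$ spanned by the vertices outside $\mathrm{In}(w)$. In a right-angled group two generators lie in a common $\mathrm{In}(w)$ only if they commute, so $\mathrm{In}(w)$ always spans a simplex; the subcomplexes occurring are therefore exactly the $N\setminus\Delta$ with $\Delta$ a simplex (and $\Delta=\emptyset$ giving $N$). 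The hypotheses $H^n(N)=0$ and $H^n(N\setminus\Delta)=0$ for $n>1$ then force $H^n_c(\Sigma_N)=0$ for $n\ge 3$, whence $\dim\bd{W_N}\le 1$; connectedness together with $|\bd{W_N}|>1$ (which holds as $N$ is not a simplex, so $W_N$ is infinite) gives $\dim\bd{W_N}\ge 1$.

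\emph{Connectedness, local connectedness, no local cut points.} All three are governed by inseparability. Since $N$ is inseparable and not a simplex it is connected with no separating simplex; as the special subgroup of a simplex is finite, this says $W_N$ does not split over a finite subgroup, i.e.\ is one-ended, so $\bd{W_N}$ is connected, and by Swarup's theorem the boundary of a one-ended hyperbolic group is locally connected and has no global cut points. For \emph{local} cut points I invoke Bowditch: a local cut point in the boundary of a one-ended hyperbolic group that is not virtually Fuchsian forces a splitting of $W_N$ over a two-ended subgroup, which for a right-angled Coxeter group again corresponds to a separating configuration excluded by inseparability; the virtually Fuchsian case is ruled out because its boundary is a circle, hence planar. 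Thus $\bd{W_N}$ has no local cut points.

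\emph{Nowhere-planarity, the main obstacle.} It remains to show that no nonempty open subset of $\bd{W_N}$ embeds in the plane, and this is exactly the role of \sg-non-planarity. Following the programme announced in the introduction, I would start from a non-planar graph placed in a controlled way inside the nerve and promote it to a topologically embedded non-planar graph in $\bd{W_N}$, built as a union of limit sets (boundaries) of special subgroups meeting along common limit points. Using the $W_N$-action and the self-similarity of the boundary, a suitable translate of this configuration can be pushed into any prescribed open set, so that every nonempty open subset contains an embedded $\katt$ or $\kapi$ and hence is non-planar. Making the passage from the combinatorial \sg-non-planarity of $N$ to an actual embedded non-planar graph in the boundary precise -- tracking which subgroup boundaries meet, and in what pattern, so that the resulting graph is genuinely embedded and genuinely non-planar -- is the technical heart of the paper and the step I expect to demand the most work. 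Once it is in place, Anderson's characterisation applies and $\bd{W_N}$ is the Menger curve.
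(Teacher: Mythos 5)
Your overall scheme coincides with the paper's: both proofs verify Anderson's characterisation, and your treatment of the five ``tame'' conditions is sound. Your dimension computation via Bestvina--Mess and Davis's formula for $H^*_c(\Sigma_N)$ (with the observation that $\mathrm{In}(w)$ spans a simplex in the right-angled case, so the relevant subcomplexes are exactly the $N\setminus\Delta$) is precisely what underlies the formula $\dim\bd{W_N}=\max\{n:\widetilde H^n(N)\neq0\;\mathrm{or}\;\widetilde H^n(N\setminus\Delta)\neq0\}$ that the paper quotes from \cite{Swiat16}, and your lower bound (a connected compact metrisable space with more than one point has dimension at least $1$) is an acceptable replacement for the paper's argument that dimension $0$ would make $W_N$ virtually free, contradicting one-endedness. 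For cut points you invoke Swarup and Bowditch directly where the paper cites the corresponding analysis in \cite[Lemma 3.1]{Swiat16}; the translation of a splitting over a two-ended subgroup into a separating pair of non-adjacent vertices or a separating suspension of a simplex is exactly what inseparability excludes. One structural difference: you rule out the virtually Fuchsian case by ``circle boundary is planar, contradicting the embedded non-planar graph'', which is valid but makes the no-local-cut-points step depend on the non-planarity step; the paper excludes the two circle cases purely combinatorially (a flag triangulation of $S^1$ has at least $4$ vertices, hence a separating pair of non-adjacent vertices, and the case $W_N=W_K\oplus W_L$ makes $N$ a join separated by a suspension of the simplex $L$).

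The genuine gap is the nowhere-planarity step, which you explicitly defer. This is not a peripheral omission: by Definition \ref{d:nieplan}, \sg-non-planarity \emph{means} that the assumptions of Theorem \ref{t:nieplan} hold for some non-planar $G$, so the hypothesis is inert in your argument until that embedding theorem is actually established. What is missing is the paper's construction: the vertices of the embedded graph are the points $\ray{a}{b},\ray{b}{a}$ coming from infinite dihedral special subgroups at the endpoints of the paths $S_i$; the edges are arcs of the circles $\bd{W_C}$ and $\bd{W_{S_i\cup L_i}}$ (types Ia, Ib, II); and the claim that these arcs meet only where intended is not formal --- it rests on Lemma \ref{l:przeccyk}, which bounds $\edgeinf{x}{y}{y}{x}{z}{C_1}\cap\bd{W_{C_2}}$ by $\{\ray{x}{y},\ray{y}{x}\}$ whenever $\edge{x}{y}{z}{C_1}\cap C_2\subseteq\{x,y\}$, proved by an edge-labelling/geodesic-bifurcation argument inside $\Sigma_{C_1}$. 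Moreover the output $H$ is only a graph having $G$ as a weak minor (one must contract the type Ib edges within each block), so even the statement ``a non-planar graph embeds'' needs this bookkeeping. Your second ingredient --- pushing the embedded graph into an arbitrary open subset --- is the one place where the paper, too, uses a black box, namely \cite[Lemma 7]{KaKl98}, and your self-similarity sketch matches it; hyperbolicity is exactly what makes that lemma available. In sum, you have correctly located every ingredient and your architecture agrees with the paper's, but the proof is complete only modulo Theorem \ref{t:nieplan}, whose construction is the paper's main contribution.
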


\begin{remark}\label{u:tmain}
\begin{enumerate}[(i)]
\item Recall that the group $W_N$ is hyperbolic if and only if the nerve $N$ satisfies the no-$\square$ condition, i.e. it contains no cycle of length 4 as a full subcomplex, 
\cite[Theorem 17.1]{Moussong88}.

\item The space $N\setminus\Delta$ is obtained by removing  the \underline{closed} simplex $\Delta$ from $N$.  

\item The nerve $N$ is \emph{inseparable} if it is connected, has no separating pair of non-adjacent vertices, no separating simplex, and no separating full subcomplex
which is 
a suspension of a simplex. 

\item For a precise definition of \sg-non-planarity,  see Definition \ref{d:nieplan}.
A nerve $N$ is \sg-non-planar, e.g.,  when it has a full subcomplex which is obtained from the $\katt$ or $\kapi$ graph by subdividing each of its edges into at least two pieces.
\end{enumerate}
\end{remark}

To prove the above result, we
use similar methods to the ones used
in \cite{Swiat16},
replacing
the Whyburn's characterisation of the Sierpiński carpet 
by
the Anderson's characterisation of the Menger curve. 
The latter 
characterisation differs from the former by replacing the
planarity condition with the requirement that no open subset is planar.
To ensure that the latter requirement holds,
we
embed a 
non-planar graph into the boundary (and, assuming hyperbolicity, into
an
arbitrary open subset of the boundary) using some non-planar graphs contained in the nerve. 
In Subsection \ref{s:nieplanelem} we discuss some building blocks of such an embedding, which are then used
in the proof of Theorem \ref{t:nieplan}. 
The latter result is the
most original ingredient in 
the proof of Theorem \ref{t:main}.  

We apply Theorem \ref{t:main} to some families of nerves. 
The first class of examples are certain triangulations of surfaces with boundary.
We actually consider a wider class of 2-complexes, 
namely the ones without lonely edges (see Definition \ref{d:lonelyedge}).
We fully characterise those 2-complexes without lonely edges which admit a triangulation that is a nerve of a right-angled Coxeter group with Menger curve boundary (Theorem \ref{t:kompleksy}, Corollary \ref{c:powierzchnie}).
As 
another application, we show that the $n$-disk $D^n$ admits a triangulation that is a nerve of the right-angled Coxeter group with Menger curve boundary if and only if $n\geq3$ (Theorem \ref{t:dysk}).

\smallskip
\textbf{Organisation of the paper.} In Section \ref{c:prel} we introduce some notation and basic notions and we describe the case of the nerve being a cycle.
In Sections \ref{s:nieplanelem}--\ref{s:nieplandowod} we prove Theorem \ref{t:nieplan} concerning embeddings of graphs in boundaries.
In Section \ref{c:dowod} we prove Theorem \ref{t:main} and discuss necessity of its assumptions.
In Section \ref{c:prz} we show the above mentioned applications of Theorem \ref{t:main}.

\smallskip
\textbf{Acknowledgements.} The author would like to thank Jacek \'Swiątkowski for the introduction to the topic of this paper.
This research was partially supported by (Polish) Narodowe Centrum Nauki, grant UMO-2017/25/B/ST1/01335.

\section{Preliminaries}\label{c:prel}

In this section we introduce some basic notation and notions that are used in this paper. We will also recall some of their basic properties.
The reader may refer to the books \cite{HatBook01,DruKaBook18}.

\subsection{Right-angled Coxeter groups and their boundaries}

\begin{definition}\label{d:pgc}
Let $\Gamma=(V_{\Gamma},E_{\Gamma})$ be a graph. The \emph{right-angled Coxeter group} $W_{\Gamma}$ is the group given by the presentation $W_{\Gamma}:=\langle \{v:v\in V_{\Gamma}\}|\{v^2=1:v\in V_{\Gamma}\}\cup\{(uv)^2=1:(u,v)\in E_{\Gamma}\}\rangle$.
The \emph{nerve} $N_{\Gamma}$ of the group $W_{\Gamma}$ is a simplicial complex obtained by spanning a simplex on each full subgraph of $\Gamma$.

\end{definition}

\begin{remark}
By the definition we have a one-to-one correspondence between \emph{flag simplicial complexes} (i.e.~the ones having the property that each full subgraph of their 1-skeleton spans a simplex) and right-angled Coxeter groups.
\end{remark}

Our next goal is to define the Davis complex. 
In order to do this, we first discuss
special subgroups of right-angled Coxeter groups and define the Cayley graph.

\begin{definition} Let $W_N$ be a right-angled Coxeter group with nerve $N$ and let $T$ be a subset of the set of vertices of the complex $N$. 
The subgroup of $W_N$ generated by the set $T$ is called the \emph{special subgroup} of $W_N$ corresponding to the set $T$.
\end{definition}

\begin{remark}
\begin{enumerate}[(i)]
\item The special subgroup $G_T$ corresponding to the set $T$ is canonically isomorphic to the group $W_K$, where $K$ is the \emph{full subcomplex} of $N$ (i.e. the simplices of $N$ spanned on the vertices of $K$ are also simplices of $K$) having the set of vertices $T$, see \cite[Theorem 4.1.6(i)]{DavBook08}.

\item In particular, for a fixed nerve $N$, we have a one-to-one correspondence between full subcomplexes of $N$ and special subgroups of $W_N$.

\end{enumerate}
\end{remark}

\begin{definition}
	Let $G$ be a group with a set of generators $S$. The \emph{Cayley graph} $\cay{G}{S}$ is an undirected graph with the set of vertices $G$ and the set of edges $\{\{g,gs\}:g\in G,s\in S\}$. 
	We label the edge $\{g,gs\}$ with $s$.	
\end{definition}

\begin{remark}
	\begin{enumerate}[(i)]
\item In the remaining part of the paper we will consider only Cayley graphs of right-angled Coxeter groups $W_N$ with generating set $N^{(0)}$. In this case, we can see that each edge of the graph $\cay{W_N}{N^{(0)}}$ has exactly 1 label
and for each vertex $g$ of the graph $\cay{W_N}{N^{(0)}}$ and label $s\in N^{(0)}$ there is a unique edge of the graph $\cay{W_N}{N^{(0)}}$ labelled with $s$ having $g$ as one of its ends.

\item If $K$ is a full subcomplex of the nerve $N$, then the graph $\cay{W_K}{K^{(0)}}$ is a subgraph of $\cay{W_N}{N^{(0)}}$.

\item 
If $\Delta\subseteq N$ is a simplex, then
the graph $\cay{W_{\Delta}}{\Delta^{(0)}}$ is the 1-skeleton of a ($\dim\Delta +1$)-cube ($W_{\Delta}$ is isomorphic to $\Z_2^{\dim\Delta +1}$), 
for
each (left) coset of the subgroup  $W_{\Delta}$ of the group $W_N$ there 
is a corresponding
1-skeleton of a ($\dim\Delta +1$)-cube in the graph $\cay{W_N}{N^{(0)}}$. 	

\end{enumerate}
\end{remark}

Now we can state the definition of the Davis complex, which, owing to the above remarks, is 
well-defined.

\begin{definition}
Let $N$ be the nerve of the right-angled Coxeter group $W_N$.	
The \emph{Davis complex} $\Sigma_N$ is a cubical 
complex having $\cay{W_N}{N^{(0)}}$ as its 1-skeleton, in which for each simplex $\Delta\subseteq N$ we span a $(\dim\Delta+1)$-cube on each set of vertices of the graph $\cay{W_N}{N^{(0)}}$ corresponding to a left coset of the special subgroup $W_{\Delta}$.	
\end{definition}

\begin{remark}\label{u:komdav}
Let $W_N$ be a right-angled Coxeter group with nerve $N$.	

\begin{enumerate}[(i)]
\item The natural action of the group $W_N$ on its Cayley graph $\cay{W_N}{N^{(0)}}$ can be extended to an action by automorphisms on the whole Davis complex $\Sigma_N$.

\item The link of each vertex of the complex $\Sigma_N$ is 
isomorphic to the nerve $N$. Moreover, the labels of the vertices of $N$ are the same as the labels of the corresponding 
edges in the complex $\Sigma_N$. 

\item If $K$ is a full subcomplex of $N$, then $\Sigma_K\subseteq\Sigma_N$.

\item The cubical complex $\Sigma_N$ 
has a 
natural piecewise euclidean $\catz$ metric,
\cite{Moussong88}.
This metric is given by taking the euclidean metric of a unit cube on each of the cubes and extending it to the whole complex by taking the infima of the lengths of 
chains of segments 
such that each of these segments is contained in a single cube.
The reader is referred to \cite{BriHaeBook99} for more information about $\catz$ geometry.

\end{enumerate}
\end{remark}

Now we define the boundary of a right-angled Coxeter group.

\begin{definition}
\begin{enumerate}[(i)]
\item Let $X$ be a $\catz$ space. The \emph{$\catz$ boundary} (also known as the \emph{visual boundary}) $\bdcatz{X}$ of the space $X$ is the space of geodesic rays starting at some fixed point $x_0$, with the topology of the inverse system $(\{S_R:R>0\},\{\pi^R_r:R>r>0\})$, where $S_R$ are the points at distance $R$ from $x_0$, 
and $\pi^R_r$ is the natural projection from $S_R$ onto $S_r$ (mapping the point $x$ of the larger sphere to the unique point $x'$ of the smaller sphere lying on the geodesic that joins $x_0$ with $x$).

\item Let $W_N$ be a right-angled Coxeter group. The \emph{boundary} 
of 
$W_N$ is the space $\bd{W_N}:=\bdcatz{\Sigma_N}$.
\end{enumerate}
\end{definition}

\begin{remark}\label{u:wlbrzeg}
\begin{enumerate}[(i)]
\item The $\catz$ boundary (up to a natural homeomorphism) is independent of the choice of the origin of the geodesic rays, \cite[Section I.8]{DavBook08}.
In the 
remainder 
of the paper we will consider Davis complexes with the base point at the vertex corresponding to the 
identity
element, and for a geodesic ray $\varrho$ we will denote by $[\varrho]$ the corresponding point of the boundary. 

\item The boundary of any right-angled Coxeter group is metrisable (as an inverse limit of metric spaces) and compact, \cite[Section I.8]{DavBook08}.
\label{u:wlbrzegzwme} 

\item 
In the case when the group $W_N$ is hyperbolic, its boundary $\bd{W_N}$ is homeomorphic to its Gromov boundary, see \cite[Chapter III.H.3]{BriHaeBook99} for more details. 
\end{enumerate}
\end{remark}

The following is a folklore result, see \cite[Appendix]{Swiat16} for a proof.

\begin{fact}\label{f:brzegpodkpl}
Let $K$ be a full subcomplex  of the nerve $N$ of the group $W_N$. Then
\begin{enumerate}[(i)]
	\item The complex $\Sigma_K$ is a convex subcomplex of  $\Sigma_N$.
	
	\item The boundary $\bd{W_K}$ is a subspace of the boundary $\bd{W_N}$.
\end{enumerate}
\end{fact}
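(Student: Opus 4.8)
The plan is to derive (ii) from (i), so the real content is the convexity assertion. For (i) I would invoke the local-to-global principle for convexity in $\catz$ spaces: a complete, connected, locally convex subset of a complete $\catz$ space is convex (see \cite{BriHaeBook99}). By Remark~\ref{u:komdav}(iii) we already have the inclusion $\Sigma_K\subseteq\Sigma_N$ of cubical complexes, and by Remark~\ref{u:komdav}(iv) the ambient complex $\Sigma_N$ is a complete $\catz$ space. Since $\Sigma_K$ is a closed subcomplex it is complete, and as a Davis complex it is connected. Thus the entire problem reduces to checking local convexity of $\Sigma_K$ inside $\Sigma_N$.

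For local convexity I would argue vertex by vertex, using that for a $\catz$ cube complex built from unit Euclidean cubes the link of each vertex is an all-right spherical complex, and a subcomplex is locally convex at a vertex exactly when its link is a \emph{full} subcomplex of the ambient link (full subcomplexes of all-right spherical complexes are $\pi$-convex, and local convexity everywhere is governed by the vertices, since every cube meets a vertex). By Remark~\ref{u:komdav}(ii) the link of any vertex $g$ of $\Sigma_N$ is a copy of $N$, in which the vertices of $N$ label the edges of $\cay{W_N}{N^{(0)}}$ issuing from $g$. For $g\in W_K$ the edges of $\Sigma_K$ at $g$ are precisely those labelled by some $s\in K^{(0)}$, and the cubes of $\Sigma_K$ at $g$ correspond to the simplices of $K$; hence the link of $\Sigma_K$ at $g$ is exactly the copy of $K$ sitting inside that copy of $N$. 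Since $K$ is a full subcomplex of $N$, this copy of $K$ is full in the link, giving local convexity at every vertex and therefore convexity of $\Sigma_K$.

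Given (i), for (ii) I would use the inverse-system description of the $\catz$ boundary. Take the basepoint $x_0$ to be the identity vertex, which lies in both $\Sigma_K$ and $\Sigma_N$. Convexity of $\Sigma_K$ means that for any $y\in\Sigma_K$ the geodesic $[x_0,y]$ stays inside $\Sigma_K$, so the radius-$R$ sphere of $\Sigma_K$ about $x_0$ is exactly $S_R\cap\Sigma_K$, and the projection $\pi^R_r$ of $\Sigma_N$ restricts to the corresponding projection of $\Sigma_K$. Consequently the inverse system $(\{S_R\cap\Sigma_K\},\{\pi^R_r\})$ computing $\bdcatz{\Sigma_K}$ is the restriction of the one computing $\bdcatz{\Sigma_N}$ to the compatible family of closed subspaces $S_R\cap\Sigma_K\subseteq S_R$. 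The inverse limit of such a family embeds as a subspace of the ambient inverse limit, which realises $\bd{W_K}=\bdcatz{\Sigma_K}$ as a subspace of $\bd{W_N}=\bdcatz{\Sigma_N}$; injectivity is clear since two rays of $\Sigma_K$ asymptotic in $\Sigma_N$ remain at bounded distance in the restricted metric, hence asymptotic in $\Sigma_K$.

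The main obstacle is the local convexity step in (i): one needs the correct combinatorial criterion that local convexity of a subcomplex is equivalent to its vertex links being full subcomplexes of the all-right spherical links of $\Sigma_N$, together with the fact that full subcomplexes of all-right spherical complexes are $\pi$-convex. A secondary point to confirm is that the generators of $W_N$ lying in the special subgroup $W_K$ are exactly the vertices in $K^{(0)}$, so that the link of $\Sigma_K$ at $g$ is genuinely $K$ and nothing larger; this follows from $W_K$ being canonically the Coxeter group on $K^{(0)}$. The boundary step (ii) is then essentially formal, the only care being to base the geodesic rays at a vertex of $\Sigma_K$ so that spheres and projections restrict cleanly.
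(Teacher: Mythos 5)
Your proposal is correct, but note that the paper itself contains no proof of this proposition: both parts are delegated wholesale to \cite[Propositions A.1 and A.2]{Swiat16}, so what you have written is a self-contained reconstruction of the standard argument behind those citations rather than an alternative to anything in the text. Your route is the usual one in the literature: reduce convexity to local convexity via the local-to-global principle for complete, connected, locally convex subsets of $\catz$ spaces, and verify local convexity through Gromov's link condition, using that the link of any vertex $g\in W_K$ in $\Sigma_K$ is the copy of $K$ inside the copy of $N$ (Remark \ref{u:komdav}(ii)) and that full subcomplexes of flag all-right spherical complexes are $\pi$-convex. Two points you wave at deserve one line each if this were to be written out: completeness and properness of $\Sigma_N$ (needed both for the local-to-global principle and for the spheres $S_R$ to be compact in the inverse system) follow from Bridson's theorem since the finite nerve gives finitely many isometry types of cubes; and the claim that vertex links control local convexity everywhere follows because fullness passes to links, i.e.\ if $A$ is full in a flag complex $L$ then the link of any simplex of $A$ is full in its link in $L$, which handles points in the interiors of cubes. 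Your derivation of (ii) from (i) -- convexity implies the spheres and projections of $\Sigma_K$ are the restrictions $S_R\cap\Sigma_K$ and $\pi^R_r|_{\Sigma_K}$, so the inverse limit embeds -- is exactly compatible with the inverse-system definition of the boundary adopted in the paper, and the injectivity remark is even simpler than you make it: with a common basepoint, each boundary point of $\Sigma_K$ is represented by a unique ray, which is also a geodesic ray of $\Sigma_N$ by convexity. In short: no gap, essentially the standard proof of the cited propositions.
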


\subsection{Groups with a cycle as a nerve}

In this section we give an example that illustrates the definitions from the previous section and is important later in this paper.

\begin{figure}
	\centering
\iflight	
	\centerline{\begin{tikzpicture}[scale=0.65]
\ifcolor
\newcommand{\gcol}{red}
\else
\newcommand{\gcol}{gray}
\fi
\newcommand{\gcolcoeff}{70}

\begin{scope}[xshift=-70]
\begin{scope}[rotate=-90]
\newcommand{\ang}{72}

\filldraw (0,0) circle (2pt);

\newcommand{\rone}{2}

\foreach \x in {1,2,3,4,5}
{
	\filldraw (\x*\ang:\rone) circle (2pt);
	\draw (0,0) -- (\x*\ang:\rone);		
}

\newcommand{\rtwo}{3.2}
\newcommand{\offstwo}{\ang*0.5}

\foreach \x in {1,2,3,4,5}
{
	\filldraw (\x*\ang+\offstwo:\rtwo) circle (2pt);
	\draw (\x*\ang:\rone) -- (\x*\ang+\offstwo:\rtwo) -- (\x*\ang+\ang:\rone);
}

\newcommand{\rthree}{4}
\newcommand{\devthree}{10}

\foreach \x in {1,2,3,4,5}
{
	\filldraw (\x*\ang-\devthree:\rthree) circle (2pt);
	\filldraw (\x*\ang+\devthree:\rthree) circle (2pt);
	\draw (\x*\ang-\devthree:\rthree) -- (\x*\ang:\rone) -- (\x*\ang+\devthree:\rthree);
}

\newcommand{\rfour}{5}
\newcommand{\devfour}{10}

\foreach \x in {1,2,3,4,5}
{
	\filldraw (\x*\ang+\offstwo-\devfour:\rfour) circle (2pt);
	\filldraw (\x*\ang+\offstwo:\rfour) circle (2pt);
	\filldraw (\x*\ang+\offstwo+\devfour:\rfour) circle (2pt);
	
	\draw (\x*\ang+\devthree:\rthree) -- (\x*\ang+\offstwo-\devfour:\rfour) -- (\x*\ang+\offstwo:\rtwo) -- (\x*\ang+\offstwo:\rfour) -- (\x*\ang+\offstwo:\rtwo) -- (\x*\ang+\offstwo+\devfour:\rfour) -- (\x*\ang+\ang-\devthree:\rthree); 
}

\foreach \x in {1,2,3,4,5}
{
	\filldraw (\x*\ang:\rfour) circle (2pt);
	\draw (\x*\ang-\devthree:\rthree) -- (\x*\ang:\rfour) -- (\x*\ang+\devthree:\rthree);
}

\newcommand{\rsix}{6}
\newcommand{\devsix}{\devfour*0.5}

\foreach \x in {1,2,3,4,5}
{
	\filldraw (\x*\ang+\offstwo-\devsix:\rsix) circle (2pt);
	\filldraw (\x*\ang+\offstwo+\devsix:\rsix) circle (2pt);
	 \draw (\x*\ang+\offstwo-\devfour:\rfour) -- (\x*\ang+\offstwo-\devsix:\rsix) -- (\x*\ang+\offstwo:\rfour) -- (\x*\ang+\offstwo+\devsix:\rsix) -- (\x*\ang+\offstwo+\devfour:\rfour);
}


\foreach \x in {1,2,3,4,5}
{
	\draw[color=\gcol,thin] (0,0) -- (\x*\ang+\offstwo:\rtwo);
	\foreach \y in {1,2,3,4}
	{
		\draw[color=\gcol,thin] (0,0) -- ($(\x*\ang+\offstwo:\rtwo)+\y*0.25*(\x*\ang:\rone)-\y*0.25*(\x*\ang+\offstwo:\rtwo)$);	
	}
	\draw[color=\gcol,thin] (0,0) -- (\x*\ang:\rone);
	\foreach \y in {0,1,2,3}
	{
		\draw[color=\gcol,thin] (0,0) -- ($(\x*\ang+\offstwo:\rtwo)+\y*0.25*(\x*\ang+\ang:\rone)-\y*0.25*(\x*\ang+\offstwo:\rtwo)$);	
	}
	\draw[color=\gcol,thin] (0,0) -- (\x*\ang+\ang+\offstwo:\rtwo);	
}

\foreach \x in {1,2,3,4,5}
{
	\draw[color=\gcol!\gcolcoeff!black,thin] (0,0) -- (\x*\ang:\rone);
}

\foreach \x in {0,1,2}
{
	\draw[color=\gcol] (3*\ang:\rone) -- ($(3*\ang:\rfour)-0.5*\x*(3*\ang:\rfour)+0.5*\x*(3*\ang-\devthree:\rthree)$);	
	\draw[color=\gcol] (3*\ang:\rone) -- ($(3*\ang:\rfour)-0.5*\x*(3*\ang:\rfour)+0.5*\x*(3*\ang+\devthree:\rthree)$);
}

\foreach \x in {0,1,2}
{
	\draw[color=\gcol] (2*\ang+\offstwo:\rtwo) -- ($(2*\ang+\offstwo:\rfour)-0.5*\x*(2*\ang+\offstwo:\rfour)+0.5*\x*(2*\ang+\offstwo-\devsix:\rsix)$);	
	\draw[color=\gcol] (2*\ang+\offstwo:\rtwo) -- ($(2*\ang+\offstwo:\rfour)-0.5*\x*(2*\ang+\offstwo:\rfour)+0.5*\x*(2*\ang+\offstwo+\devsix:\rsix)$);	
}

\draw[color=\gcol] ($0.75*(3*\ang:\rone)+0.25*(3*\ang+\offstwo:\rtwo)$) -- ($0.5*(3*\ang+\devthree:\rthree)+0.5*(3*\ang+\offstwo-\devfour:\rfour)$); 
\draw[color=\gcol] ($0.5*(3*\ang:\rone)+0.5*(3*\ang+\offstwo:\rtwo)$) -- (3*\ang+\offstwo-\devfour:\rfour);
\draw[color=\gcol] ($0.25*(3*\ang:\rone)+0.75*(3*\ang+\offstwo:\rtwo)$) -- ($0.5*(3*\ang+\offstwo:\rtwo)+0.5*(3*\ang+\offstwo-\devfour:\rfour)$) -- ($0.5*(3*\ang+\offstwo-\devsix:\rsix)+0.5*(3*\ang+\offstwo-\devfour:\rfour)$);

\draw[color=\gcol] ($0.75*(3*\ang:\rone)+0.25*(3*\ang-\offstwo:\rtwo)$) -- ($0.5*(3*\ang-\devthree:\rthree)+0.5*(3*\ang-\offstwo+\devfour:\rfour)$); 
\draw[color=\gcol] ($0.5*(3*\ang:\rone)+0.5*(3*\ang-\offstwo:\rtwo)$) -- (3*\ang-\offstwo+\devfour:\rfour);
\draw[color=\gcol] ($0.25*(3*\ang:\rone)+0.75*(3*\ang-\offstwo:\rtwo)$) -- ($0.5*(3*\ang-\offstwo:\rtwo)+0.5*(3*\ang-\offstwo+\devfour:\rfour)$) -- ($0.5*(3*\ang-\offstwo+\devsix:\rsix)+0.5*(3*\ang-\offstwo+\devfour:\rfour)$);

\foreach \x in {2}
{
	\draw[color=\gcol!\gcolcoeff!black] (3*\ang:\rone) -- ($(3*\ang:\rfour)-0.5*\x*(3*\ang:\rfour)+0.5*\x*(3*\ang-\devthree:\rthree)$);	
	\draw[color=\gcol!\gcolcoeff!black] (3*\ang:\rone) -- ($(3*\ang:\rfour)-0.5*\x*(3*\ang:\rfour)+0.5*\x*(3*\ang+\devthree:\rthree)$);
}

\foreach \x in {0}
{
	\draw[color=\gcol!\gcolcoeff!black] (2*\ang+\offstwo:\rtwo) -- ($(2*\ang+\offstwo:\rfour)-0.5*\x*(2*\ang+\offstwo:\rfour)+0.5*\x*(2*\ang+\offstwo-\devsix:\rsix)$);	
	\draw[color=\gcol!\gcolcoeff!black] (2*\ang+\offstwo:\rtwo) -- ($(2*\ang+\offstwo:\rfour)-0.5*\x*(2*\ang+\offstwo:\rfour)+0.5*\x*(2*\ang+\offstwo+\devsix:\rsix)$);	
}
\end{scope}

\draw[->,decorate,decoration={snake},very thick] (5,0) -- (7.5,0);

\end{scope}

\begin{scope}[xshift=300]
\newcommand{\ronea}{2.2}
\newcommand{\roneb}{2.7}
\newcommand{\ang}{24}
\newcommand{\angdev}{3}

\draw (0,0) circle (\ronea);
\draw (0,0) circle (\roneb);

\foreach \x in {1,2,3,4,5,6,7,8,9,10,11,12,13,14,15}
{
	\foreach \y in {-2,-1,0,1,2}
	{
		\draw[color=\gcol!85!black] (\x*\ang:\ronea) -- (\x*\ang+\y*\angdev:\roneb);
	}
	\foreach \y in {1,2,3}
	{
		\draw[color=\gcol] (\x*\ang+\ang-0.25*\y*\ang:\ronea) -- (0.25*\y*\x*\ang+0.25*\y*2*\angdev+\x*\ang+\ang-2*\angdev-0.25*\y*\x*\ang-0.25*\y*\ang+0.25*\y*2*\angdev:\roneb);
	}
}

\newcommand{\sele}{10pt};
\newcommand{\amp}{3pt}
\newcommand{\rout}{5}

\draw[thick,color=\gcol!\gcolcoeff!black,rounded corners,decorate,decoration={random steps,segment length=\sele,amplitude=\amp}]  (0,0) -- (\ang*7+0.5*\ang:\ronea);
\draw[thick,color=\gcol!\gcolcoeff!black] (\ang*7+0.5*\ang:\ronea) --  (\ang*7+0.5*\ang:\roneb);
\draw[thick,color=\gcol!\gcolcoeff!black,rounded corners,decorate,decoration={random steps,segment length=\sele,amplitude=\amp}]  (\ang*7+0.5*\ang:\roneb) -- (\ang*7+0.5*\ang:\rout);

\draw[thick,color=\gcol!\gcolcoeff!black,rounded corners,decorate,decoration={random steps,segment length=\sele,amplitude=\amp}]  (0,0) -- (\ang*11:\ronea);
\draw[thick,color=\gcol!\gcolcoeff!black] (\ang*11:\ronea) --  (\ang*11-\angdev:\roneb);
\draw[thick,color=\gcol!\gcolcoeff!black,rounded corners,decorate,decoration={random steps,segment length=\sele,amplitude=\amp}]  (\ang*11-\angdev:\roneb) -- (\ang*11-\angdev:\rout);

\draw[thick,color=\gcol!\gcolcoeff!black,rounded corners,decorate,decoration={random steps,segment length=\sele,amplitude=\amp}]  (0,0) -- (\ang*15+0.75*\ang:\ronea);
\draw[thick,color=\gcol!\gcolcoeff!black] (\ang*15+0.75*\ang:\ronea) --  (\ang*15+0.5*\ang+0.125*\ang-0.125*\angdev:\roneb);
\draw[thick,color=\gcol!\gcolcoeff!black,rounded corners,decorate,decoration={random steps,segment length=\sele,amplitude=\amp}]  (\ang*15+0.5*\ang+0.125*\ang-0.125*\angdev:\roneb) -- (\ang*15+0.5*\ang+0.125*\ang-0.125*\angdev:\rout) node[midway,below] {\textcolor{black}{{$\varrho$}}};

\filldraw (0,0) circle (2pt);
\draw[thick] (0,0) circle (\rout);

\filldraw[color=\gcol!\gcolcoeff!black] 
(\ang*7+0.5*\ang:\rout) circle (2pt);
\filldraw[color=\gcol!\gcolcoeff!black] (\ang*11-\angdev:\rout) circle (2pt);
\filldraw[color=\gcol!\gcolcoeff!black] (\ang*15+0.5*\ang+0.125*\ang-0.125*\angdev:\rout) circle (2pt) node[right] {\textcolor{black}{{$[\varrho]$}}};

\path (90:\rout*1.1) node {\large {$\bd{W_C}$}};
\path (120:\rout*0.75) node {\Large {$\Sigma_C$}};
\path (60:\ronea*0.87) node {\footnotesize {$S_r$}};
\path (60:\roneb*1.13) node {\footnotesize {$S_{r+\varepsilon}$}};

\end{scope}

\end{tikzpicture}}
\else
	\includegraphics[width=0.9\textwidth]{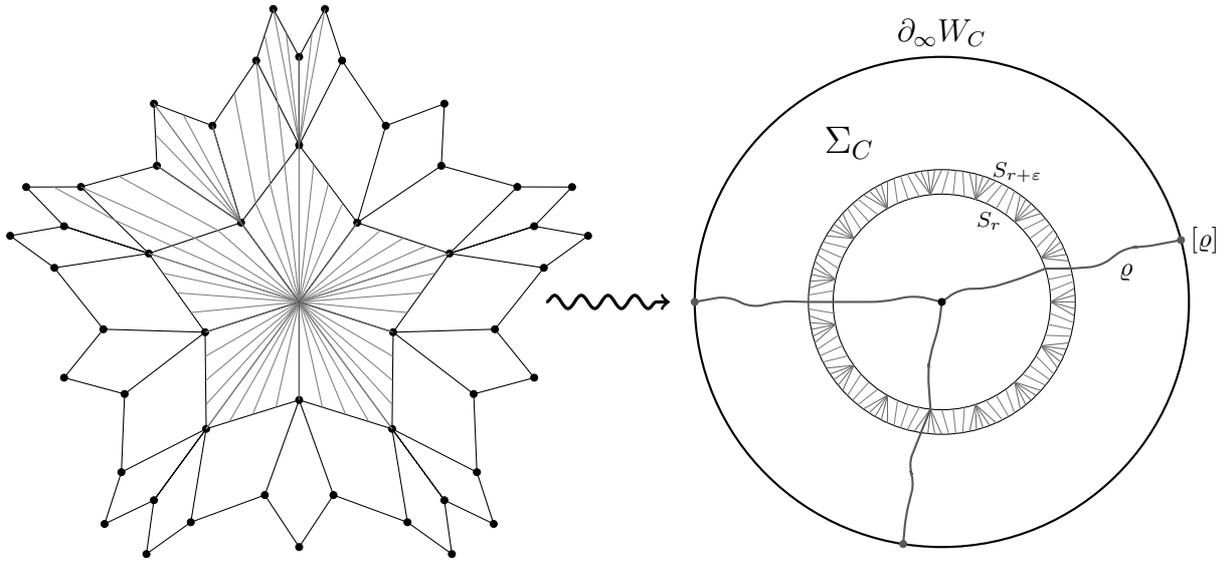}
\fi
	
	\caption{\emph{Left:} part of the complex $\Sigma_C$ with $n=5$ with some geodesics. \emph{Right:} some way of ``drawing'' $\Sigma_C\cup\bd{W_C}$ on the disk $D^2$ that illustrates the conclusion of Proposition \ref{f:brzegcyklu}.}
	\label{fig:brzegcyklu}
\end{figure}

Let $C$ be an $n$-cycle for $n\geq4$. 
Then $C$ is a flag simplicial complex, and therefore it is the nerve of the right-angled Coxeter group $W_C$. 
We argue that the Davis complex $\Sigma_C$ 
is homeomorphic to a tiling of the interior $\intr D^2$ of the disk $D^2$, such that each vertex is of degree $n$, with points in the boundary $\partial D^2$ corresponding to
the points in the boundary $\bdcatz{\Sigma_C}$
(see Proposition \ref{f:brzegcyklu}) 

We analyse the geodesic rays based at the vertex $e\in\Sigma_C$. By Proposition \ref{f:brzegpodkpl}, for each 2-cube of the complex $\Sigma_C$, the metric induced from $\Sigma_C$ is the standard Euclidean metric. In particular, the geodesic rays are chains of segments with each segment contained in a 2-cube and geodesic rays starting at $e$ go radially to the boundary of the union of the 2-cubes that contain $e$.
By shadow 
characterisation of bifurcations of geodesics,
\cite[Lemma 2d.1]{DavJan91},
a geodesic can be extended if and only if the angle between it and its continuation is at least $2\pi$ in both possible measuring directions. In particular, a geodesic ending in the interior of a 1-cell can be extended in a unique way, and a geodesic ending at a vertex can be extended in directions spanning an angle $(n-4)\pi$, in particular, we have a bifurcation iff $n\geq5$.
See Figure \ref{fig:brzegcyklu}.
Now we proceed to the description of the inverse system $(S_R,\pi_r^R)$, that 
appears
in the definition of the boundary $\bd{W_C}$. 
Each geodesic can be extended to a geodesic ray and the geodesic rays (based at the vertex $e$) cover the whole complex $\Sigma_C$. 
Furthermore, looking at the local behaviour of the geodesics, the spaces $\{S_R:R>0\}$ correspond to concentrically embedded copies of the circle $S^1$ and the projections $\pi_r^R$ are monotonic (i.e. preimage of each point is connected). 
Thus we can view the complex $\Sigma_C$ as an inverse system, which can be ``placed'' on a plane, and, furthermore, we have the following fact.

\begin{fact}\label{f:brzegcyklu}
There exist homeomorphisms $h_1:\Sigma_C\to\intr D^2$ and $h_2:\bd{W_C}\to\partial D^2$ such that for each geodesic ray $\varrho$ in $\Sigma_C$ that starts at the vertex $e$ 
we have
$\overline{h_1(\varrho)}\cap\partial D^2=\{h_2([\varrho])\}$. 		
\end{fact}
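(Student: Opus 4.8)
The plan is to produce the two homeomorphisms simultaneously, as the restrictions of a single homeomorphism of pairs $\Phi\colon(D^2,\partial D^2)\to(\Sigma_C\cup\bd{W_C},\,\bd{W_C})$, where $\Sigma_C\cup\bd{W_C}$ carries the visual topology from the definition of $\bdcatz{\Sigma_C}$. Once such a $\Phi$ is available I set $h_1:=\Phi^{-1}|_{\Sigma_C}$ and $h_2:=\Phi^{-1}|_{\bd{W_C}}$, and the required compatibility is then automatic: in $\Sigma_C\cup\bd{W_C}$ the closure of a geodesic ray $\varrho$ is $\varrho\cup\{[\varrho]\}$, since a ray converges to its own ideal endpoint and accumulates at no other boundary point, so $\overline{\varrho}\cap\bd{W_C}=\{[\varrho]\}$; applying the homeomorphism $\Phi^{-1}$ gives $\overline{h_1(\varrho)}\cap\partial D^2=\{h_2([\varrho])\}$.

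Two topological identifications feed into $\Phi$. First I would record that $\Sigma_C$ is a proper $\catz$ $2$-manifold without boundary: the link of every vertex is the nerve $C$, an $n$-cycle and hence a circle, while each $1$-cube lies in exactly two squares, because each generator is an endpoint of exactly two edges of the cycle $C$; thus $\Sigma_C$ is locally Euclidean, and being $\catz$ it is contractible, so $\Sigma_C\cong\R^2\cong\intr D^2$. Second, $\bd{W_C}\cong S^1$. For this I would use the inverse system set up before the statement: the metric balls $B_R=\{x:d(e,x)\le R\}$ are convex, hence disks with $S_R=\partial B_R\cong S^1$, and the bonding maps $\pi^R_r\colon S_R\to S_r$, which are nearest-point projection onto $B_r$ (that is, $x\mapsto\gamma_{[e,x]}(r)$), are surjective by geodesic completeness and monotone, their point-preimages being the connected fans of geodesic extensions. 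An inverse limit of circles under monotone surjections is again a circle (monotone surjections of $S^1$ are near-homeomorphisms), so $\bd{W_C}=\lim_{\leftarrow}S_R\cong S^1$.

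It remains to assemble $\Phi$, i.e. to show that the pair $(\Sigma_C\cup\bd{W_C},\bd{W_C})$ is homeomorphic to $(D^2,\partial D^2)$. I would realise $\Sigma_C\cup\bd{W_C}$ as the inverse limit of the pairs $(B_R,S_R)$ and build $\Phi$ by stacking the circles $S_R$ as the concentric circles $\{|z|=\rho(R)\}$ for an increasing homeomorphism $\rho\colon[0,\infty]\to[0,1]$. Concretely I would choose compatible homeomorphisms $B_R\to\{|z|\le\rho(R)\}$ inductively on $R$: the closed region of $\Sigma_C$ between $S_r$ and $S_R$ is an annulus, a disk minus an open convex subdisk, and because $\pi^R_r$ is a monotone surjection of circles this annulus can be mapped onto the flat annulus $\{\rho(r)\le|z|\le\rho(R)\}$ so as to intertwine $\pi^R_r$ with radial projection. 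Sending $e$ to $0$ and letting $\rho(R)\to1$ as $R\to\infty$ then produces a homeomorphism onto $D^2$ restricting to $h_2^{-1}$ on $\partial D^2$.

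The main obstacle is exactly the behaviour at the cone points. For $n\ge5$ the $\catz$ metric on $\Sigma_C$ has vertices of cone angle $>2\pi$, so geodesics from $e$ bifurcate there; the naive ``polar coordinate'' map $(\xi,t)\mapsto\gamma_\xi(t)$ therefore fails to be injective at such vertices, which is why one cannot simply invoke a cone homeomorphism $\Sigma_C\cup\bd{W_C}\cong\Cone{\bd{W_C}}$. The crux is that every instance of this non-injectivity is absorbed into the monotonicity of the $\pi^R_r$: a single point of $S_r$ opening into an arc of $S_R$ is precisely a connected point-preimage, and this is exactly what both the identification $\lim_{\leftarrow}S_R\cong S^1$ and the annulus-by-annulus construction of $\Phi$ require. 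The remaining care is in verifying that each $B_R$ is genuinely a disk with Jordan boundary $S_R$, which follows from convexity of metric balls in the $\catz$ plane $\Sigma_C$ together with the Schoenflies theorem, and that the inductive annulus homeomorphisms can be chosen compatibly, a routine consequence of the monotone structure.
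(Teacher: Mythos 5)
Your overall architecture is exactly the folklore argument the paper alludes to (it explicitly skips the proof, referring to the preceding discussion of the inverse system $(S_R,\pi^R_r)$ with monotone projections): your first two paragraphs are sound --- $\Sigma_C$ is a contractible $2$-manifold, hence homeomorphic to $\intr D^2$; the spheres $S_R$ are Jordan curves bounding the convex disks $B_R$; $\bd{W_C}=\lim_{\leftarrow}S_R\cong S^1$ because monotone surjections of circles are near-homeomorphisms; and once a homeomorphism of pairs $\Phi\colon(D^2,\partial D^2)\to(\Sigma_C\cup\bd{W_C},\bd{W_C})$ exists, the ray-closure property is indeed automatic, since in the cone topology $\overline{\varrho}=\varrho\cup\{[\varrho]\}$.

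The gap is in the assembly step, at the very point you flag yourself and then claim is ``absorbed.'' For $n\geq5$ the annulus $A=B_R\setminus\intr B_r$ \emph{cannot} be mapped onto the flat annulus ``so as to intertwine $\pi^R_r$ with radial projection'': radial projection from the outer to the inner circle of a flat annulus is injective, so a homeomorphism $\phi$ of $A$ onto the flat annulus satisfying $p\circ\phi|_{S_R}=\phi|_{S_r}\circ\pi^R_r$ would force $\pi^R_r$ to be injective, which fails as soon as $A$ contains a vertex (cone angle $n\pi/2>2\pi$), where a point of $S_r$ opens into a nondegenerate arc of $S_R$; equivalently, a two-dimensional geodesic fan cannot be carried homeomorphically onto a radial segment. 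So exact intertwining is obstructed by the cone points rather than absorbed by monotonicity. The repair is standard but must be stated: replace exact by \emph{approximate} intertwining. Choose the boundary homeomorphism $h_{R_{n+1}}$ to be $\varepsilon_n$-close to $h_{R_n}\circ\pi^{R_{n+1}}_{R_n}$ (possible precisely because monotone circle surjections are near-homeomorphisms --- the fact you already invoke), squeezing the countably many nondegenerate preimage arcs to prescribed small angular sizes, and extend over the annulus so that the image of each geodesic fan has angular diameter at most its outer-arc image plus $\varepsilon_n$; the existence of such controlled extensions is exactly Moore's theorem that monotone (cell-like) maps of compact $2$-manifolds are near-homeomorphisms, and with $\sum\varepsilon_n<\infty$ the angular coordinate of $h_1$ converges along every ray, so $\Phi$ is a homeomorphism of pairs. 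Alternatively, quote Brown's inverse-limit theorem for the system of disks $(B_{R_n})$ with geodesic retractions as bonding maps, which are cell-like by the same fan analysis. With that substitution your proof goes through and coincides with the argument the paper leaves to the reader.
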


We skip the proof of the above proposition as it is a well known folklore fact. It can be also derived elementarily, using the observations made in the discussion above its statement.
Proposition \ref{f:brzegcyklu} will be used throughout the course of this paper and we will not refer to it explicitly.

\subsection{Menger curve}

In this paper, we use the following characterisation of the Menger curve due to Anderson \cite{An58a,An58b}.

\begin{fact}\label{f:charmenger}
Each topological space that is metrisable, compact, 1-dimen-sional, connected, locally connected, has no local cut-points and has no open planar subsets is homeomorphic to the Menger curve.
\end{fact}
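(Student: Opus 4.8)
The statement is R.\,D.\ Anderson's topological characterisation of the Menger curve, so in practice only the nontrivial implication needs an argument: an abstract space $X$ satisfying the seven listed properties is homeomorphic to the Menger curve $M$. That $M$ itself is a $1$-dimensional Peano continuum with no local cut points and no open planar subset is a routine verification from the standard sponge construction, which I would dispatch first. The plan for the substantive direction is a back-and-forth construction that realises $X$ and $M$ simultaneously as inverse limits of finite graphs with combinatorially matching bonding data, and then reads off a homeomorphism from a compatible tower of nerve isomorphisms.

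The technical engine is a refinement lemma for covers of $X$. Since $X$ is a $1$-dimensional Peano continuum, every finite open cover admits arbitrarily fine refinements by connected ``bricks'' whose nerve is a finite graph (the nerve can be kept at most $1$-dimensional because $\dim X=1$). Using local connectedness and the absence of local cut points I would arrange each successive partition so that its incidence graph has no free ends and no local separations, matching the combinatorics of the finite stages $M_n$ of the sponge. The role of ``no open planar subset'' is to force, via Kuratowski's theorem, that the nerve graph restricted to every brick contains a subdivided \katt{} or \kapi; this is exactly the local nonplanarity that the stages $M_n$ exhibit and that a planar Peano continuum — for instance the Sierpi\'nski carpet, which satisfies all the \emph{other} axioms — cannot.

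With such refinements available I would build nested partitions $\mathcal{P}_1\succ\mathcal{P}_2\succ\cdots$ of $X$ and $\mathcal{Q}_1\succ\mathcal{Q}_2\succ\cdots$ of $M$ together with nerve isomorphisms $\varphi_n$ commuting with the refinement (projection) maps. The existence of a common refinement at each stage is an amalgamation/extension property: any isomorphism between coarse nerves extends over a sufficiently fine refinement, because both spaces are universal and homogeneous with respect to finite graph patterns — this is the Fra\"iss\'e-type content behind the uniqueness of $M$. Since the meshes of $\mathcal{P}_n$ and $\mathcal{Q}_n$ tend to $0$, the maps $\varphi_n$ assemble on the inverse limits into a bijection that is continuous with continuous inverse, giving the homeomorphism $X\cong M$.

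\textbf{Main obstacle.} The hard part is precisely the amalgamation step, and it is where ``no open planar subset'' does all the work. Abstractly it amounts to verifying Bestvina's criterion for the $1$-dimensional universal Menger compactum: that $X$ has the \emph{disjoint arcs property} (any two maps $I\to X$ are $\varepsilon$-approximable by maps with disjoint images); once this is in hand the matching closes up automatically. To route one arc off another near a point of intersection one must detour through a nonplanar neighbourhood, and I would produce the detour by embedding a subdivided \katt{} or \kapi{} near the intersection (available by nonplanarity) and reconnecting its ends (possible because there are no local cut points). This local nonplanar embedding — the same phenomenon the present paper develops for boundaries of Coxeter groups — is the crux; with it secured, Anderson's construction, or equivalently Bestvina's characterisation, yields the homeomorphism and hence Fact~\ref{f:charmenger}.
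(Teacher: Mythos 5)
First, a point of comparison: the paper does not prove this statement at all --- it is Anderson's characterisation of the Menger curve, quoted as a known fact with references \cite{An58a,An58b}, and the text explicitly skips any argument. So the benchmark here is a citation, and your attempt is an effort to reprove a classical and genuinely difficult theorem from scratch. Your outline does track the accepted modern route (inverse limits of finite graphs, with the matching step reduced to Bestvina's criterion, i.e.\ the disjoint arcs property $\mathrm{DD}^1\mathrm{P}$ for $1$-dimensional, connected, locally connected compacta), so as a roadmap it is historically and structurally sound.

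As a proof, however, it has two genuine gaps, both located exactly where you admit "the hard part" lies. First, the amalgamation step is circular as stated: you justify extending a coarse nerve isomorphism over a fine refinement by saying both spaces are "universal and homogeneous with respect to finite graph patterns" --- but for the abstract space $X$ that property \emph{is} the theorem being proved; only for the standard model $M$ may it be taken from the sponge construction. Second, the verification of the disjoint arcs property is only gestured at. Observing that a nonplanar neighbourhood should contain a subdivided $\katt$ or $\kapi$ and that one can "detour" through it is not an argument: (a) Kuratowski's theorem concerns finite graphs and does not yield an embedded nonplanar graph inside an open subset of a continuum --- for that you need Claytor's embedding theorem for Peano continua, which crucially uses the absence of local cut points (the paper invokes the analogous statement from \cite{KaKl98} for exactly this reason in its applications); and (b) even granted the embedded graph, producing $\varepsilon$-approximations of two given maps $I\to X$ with disjoint images requires a systematic rerouting scheme compatible with the partition structure --- this construction is the bulk of Anderson's two papers and cannot be closed by the one-sentence appeal you make. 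The correct course, and the one the paper takes, is to cite Anderson (or Bestvina's characterisation) rather than reprove it; if you do want a self-contained proof, the DAP verification must be carried out in full, and nothing in your sketch yet does so.
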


\section{Non-planarity and proof of the main theorem}\label{c:nieplan}
In this section we find a sufficient condition for non-planarity of the boundary of a right-angled Coxeter group.
More precisely, we show how to embed some graphs 
into the boundary.
If the embedded graph is non-planar, then, under the additional assumption that the group itself is hyperbolic, no open subset of the boundary is planar (see the last part of the proof of Theorem \ref{t:main} in Subsection \ref{c:dowod}). 

\subsection{Building blocks}\label{s:nieplanelem}

By Proposition \ref{f:brzegpodkpl}, for each cycle 
that is
a full subcomplex of the nerve $N$, there is a corresponding homeomorphic copy of the circle $S^1$ in the boundary $\bd{W_N}$. The arcs of the embeddings of graphs that we construct in this section will consist of parts of such circles.

Next, observe that 
the special subgroup $W_{\{a,b\}}$ for any 2 distinct vertices $a,b$ of $N$ not connected by an edge
is the infinite dihedral group, whose Davis complex is the real line subdivided into 1-cubes labelled alternatingly by $a$ and $b$. Therefore, its boundary consists of two points corresponding to the geodesic rays $ababab\ldots$ and $bababa\ldots$. We denote these points $\ray{a}{b}$ and $\ray{b}{a}$, respectively. 
By Proposition \ref{f:brzegpodkpl}, each pair of non-adjacent vertices in the nerve $N$ gives rise to a pair of points in the boundary $\bd{W_N}$. 
The vertices of the embeddings of graphs that we construct in this section will consist of such points.

\begin{figure}
\centering
	\centerline{\begin{tikzpicture}
\newcommand{\acol}{darkgray}
\newcommand{\ang}{60}
\newcommand{\scloc}{1.2}

\begin{scope}[xshift=-140,yshift=50]
\begin{scope}[rotate=-90,scale=0.25]
\newcommand{\rone}{5}

\draw (0,0) circle (\rone);
\foreach \x/\n in {1/$x_5$,2/$x_6$,3/$x_1$,4/$x_2$,5/$x_3$,6/$x_4$}
{
	\filldraw (\x*\ang:\rone) circle (6pt);
	\path (\x*\ang:\rone*1.18) node {\footnotesize {\n}};
}
\draw[color=\acol,very thick] (4*\ang:\rone) arc (4*\ang:6*\ang:\rone);
\end{scope}
\end{scope}

\begin{scope}[scale=0.65,rotate=-90]
\filldraw (0,0) circle (2pt);

\newcommand{\rone}{1*\scloc}

\foreach \x in {1,2,3,4,5,6}
{
	\filldraw (\x*\ang:\rone) circle (2pt);
	\draw (0,0) -- (\x*\ang:\rone);	
}

\newcommand{\rtwo}{1.5*\scloc}
\newcommand{\offstwo}{\ang*0.5}

\foreach \x in {1,2,3,4,5,6}
{
	\filldraw (\x*\ang+\offstwo:\rtwo) circle (2pt);
	\draw (\x*\ang:\rone) -- (\x*\ang+\offstwo:\rtwo) -- (\x*\ang+\ang:\rone);
}

\newcommand{\rthree}{2.0*\scloc}
\newcommand{\devthree}{15}

\foreach \x in {3}
{
	\filldraw (\x*\ang-\devthree:\rthree) circle (2pt);
	\filldraw (\x*\ang:\rthree) circle (2pt);
	\filldraw (\x*\ang+\devthree:\rthree) circle (2pt);
	\draw (\x*\ang-\devthree:\rthree) -- (\x*\ang:\rone);
	\draw (\x*\ang+\devthree:\rthree) -- (\x*\ang:\rone);
	\draw (\x*\ang:\rone) -- (\x*\ang:\rthree);
}

\newcommand{\rfour}{2.5*\scloc}
\newcommand{\devfour}{0}

\foreach \x in {2}
{
	\filldraw (\x*\ang+\offstwo+\devfour:\rfour) circle (2pt);
	
	\draw (\x*\ang+\offstwo:\rtwo) -- (\x*\ang+\offstwo+\devfour:\rfour) -- (\x*\ang+\ang-\devthree:\rthree); 
}

\foreach \x in {3}
{
	\filldraw (\x*\ang+\offstwo-\devfour:\rfour) circle (2pt);
	
	\draw (\x*\ang+\devthree:\rthree) -- (\x*\ang+\offstwo-\devfour:\rfour) -- (\x*\ang+\offstwo:\rtwo); 
}

\newcommand{\devfourmid}{10}

\foreach \x in {3}
{
	\filldraw (\x*\ang-\devfourmid:\rfour) circle (2pt);
	\filldraw (\x*\ang+\devfourmid:\rfour) circle (2pt);
	\draw (\x*\ang-\devthree:\rthree) -- (\x*\ang-\devfourmid:\rfour) -- (\x*\ang:\rthree) -- (\x*\ang+\devfourmid:\rfour) -- (\x*\ang+\devthree:\rthree);
}

\ifcolor
\newcommand{\gcol}{red}
\else
\newcommand{\gcol}{gray}
\fi
\newcommand{\gcolcoeff}{70}
\newcommand{\rout}{5}
\newcommand{\geodev}{15};
\newcommand{\sele}{10pt};
\newcommand{\amp}{3pt}

\newcommand{\geowta}{0.05}
\newcommand{\geowtb}{(1-\geowta)}
\newcommand{\geodevmult}{1.5}

\foreach \x/\n in {1/$x_5$,2/$x_6$,4/$x_2$,5/$x_3$,6/$x_4$}
{
	\draw[color=\gcol!\gcolcoeff!black,thick] (0,0) -- (\x*\ang:\rone) node[midway] {\footnotesize \textcolor{black}{{\n}}};
	
	
	\draw[color=\gcol] (\x*\ang:\rone) -- ($\geowtb*(\x*\ang:\rone)+\geowta*(\x*\ang-\geodev*\geodevmult:\rout)$);
	\draw[color=\gcol,rounded corners,decorate,decoration={random steps,segment length=\sele,amplitude=\amp}] ($\geowtb*(\x*\ang:\rone)+\geowta*(\x*\ang-\geodev*\geodevmult:\rout)$) -- (\x*\ang-\geodev:\rout);
			
	\draw[color=\gcol] (\x*\ang:\rone) -- ($\geowtb*(\x*\ang:\rone)+\geowta*(\x*\ang:\rout)$);
	\draw[color=\gcol,rounded corners,decorate,decoration={random steps,segment length=\sele,amplitude=\amp}] ($\geowtb*(\x*\ang:\rone)+\geowta*(\x*\ang:\rout)$) -- (\x*\ang:\rout);
	
	\draw[color=\gcol] (\x*\ang:\rone) -- ($\geowtb*(\x*\ang:\rone)+\geowta*(\x*\ang+\geodev*\geodevmult:\rout)$);
	\draw[color=\gcol,rounded corners,decorate,decoration={random steps,segment length=\sele,amplitude=\amp}] ($\geowtb*(\x*\ang:\rone)+\geowta*(\x*\ang+\geodev*\geodevmult:\rout)$) -- (\x*\ang+\geodev:\rout);
}

\foreach \x/\n in {3/$x_1$}
{
	\draw[color=\gcol!\gcolcoeff!black,thick] (0,0) -- (\x*\ang:\rone) node[midway] {\tiny \textcolor{black}{{\n}}};		
	
	\draw[color=\gcol!\gcolcoeff!black,thick] (\x*\ang-\devthree:\rthree) -- (\x*\ang:\rone) node[near start] {\textcolor{black}{\tiny {$x_5$}}}; 
	\draw[color=\gcol!\gcolcoeff!black,thick] (\x*\ang:\rone) -- (\x*\ang+\devthree:\rthree) node[near end] {\tiny \textcolor{black}{{$x_3$}}};
	\draw[color=\gcol!\gcolcoeff!black,thick] (\x*\ang:\rone) -- (\x*\ang:\rthree) node[midway] {\tiny \textcolor{black}{{$x_4$}}};
	
	
	\draw[color=\gcol,rounded corners,decorate,decoration={random steps,segment length=\sele,amplitude=\amp}] (\x*\ang-\devthree:\rthree) -- (\x*\ang-\geodev:\rout);
	\draw[color=\gcol,rounded corners,decorate,decoration={random steps,segment length=\sele,amplitude=\amp}] (\x*\ang:\rthree) -- (\x*\ang:\rout);
	\draw[color=\gcol,rounded corners,decorate,decoration={random steps,segment length=\sele,amplitude=\amp}] (\x*\ang+\devthree:\rthree) -- (\x*\ang+\geodev:\rout);
}

\draw (0,0) circle (\rout);
\foreach \x/\n in {1/$x_5$,2/$x_6$,3/$x_1$,4/$x_2$,5/$x_3$,6/$x_4$}
{
	\draw[decorate,decoration={text along path,text align=center,text={block of {\n}}},color=white] (\x*\ang+\geodev:\rout+0.2) arc (\x*\ang+\geodev:\x*\ang-\geodev:\rout+0.2);
}

\draw[color=\acol,ultra thick] (4*\ang:\rout) arc (4*\ang:6*\ang-\geodev:\rout);

\foreach \x in {1,2,3,4,5,6}
{
	\filldraw (\x*\ang-\geodev:\rout) circle (2pt);
	\filldraw (\x*\ang:\rout) circle (2pt);
	\filldraw (\x*\ang+\geodev:\rout) circle (2pt);
}

\path (3*\ang-\geodev-2:\rout) node[below] {\tiny $\ray{x_1}{x_5}$};
\path (3*\ang:\rout) node[below] {\tiny $\ray{x_1}{x_4}$};
\path (3*\ang+\geodev+2:\rout) node[below] {\tiny $\ray{x_1}{x_3}$};

\end{scope}

\end{tikzpicture}}

	\caption{The nerve $C$, which is a 6-cycle, 
		part of the complex $\Sigma_C$ and the boundary $\bd{W_C}$; some geodesic rays, coming from 2-vertex special subgroups of the group $W_C$, and the blocks have been marked. We put arcs $\edge{x_2}{x_4}{x_3}{C}$ and $\edgeinf{x_4}{x_2}{x_2}{x_5}{x_3}{C}$ in bold.}
	\label{fig:cykwierzch}
\end{figure}
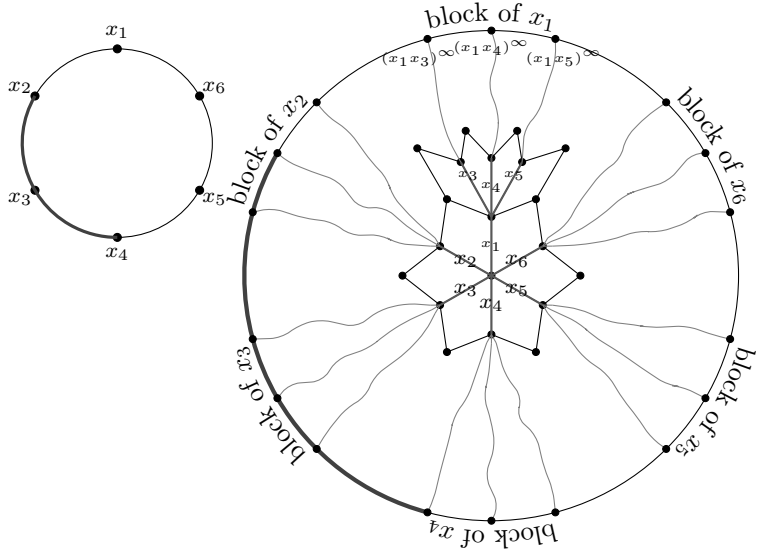

Consider a nerve $C$ homeomorphic to $S^1$, denote its 
consecutive
vertices as $x_1,\ldots,x_n$. 
For a fixed $i$, we call the set $\{\ray{x_i}{x_j}:x_i,x_j\mathrm{\;distinct,\;not\;adjacent\;in\;}C\}$ $\subseteq\bd{W_C}$ the \emph{block} of $x_i$.
In a cyclic order on $\bd{W_C}\cong S^1$, we have first the points of the block of $x_1$, then the points of the block of $x_2$, \ldots, the points of the block of $x_n$.  
Now, for vertices $x,x',y,y',z\in C^{(0)}$ such that $x,y,z$ are pairwise different, $x,x'$ distinct, non-adjacent, and $y,y'$ distinct, non-adjacent, denote by $\edgeinf{x}{x'}{y}{y'}{z}{C}$ the arc contained in the circle $\bd{W_C}$ that has endpoints $\ray{x}{x'},\ray{y}{y'}$ and contains any point (equivalently, all points) 
of the block of $z$. 
One may view it as a counterpart of the arc $\edge{x}{y}{z}{C}\subseteq C$ that has endpoints $x,y$ and contains vertex $z$.
See Figure \ref{fig:cykwierzch}.

The following lemma allows us to analyse the intersections of pairs of circles in terms of the intersections of the circles in the nerve that generate them.

\begin{lemma}\label{l:przeccyk}
Let $C_1,C_2$ be full subcomplexes of a nerve $N$ such that $C_1$ is a cycle and let 
$x,y$ be two different, non-adjacent 
vertices of $C_1$. 
Assume that there is a vertex $z\in C_1^{(0)}\setminus\{x,y\}$ such that 
we have
$\edge{x}{y}{z}{C_1}\cap C_2\subseteq\{x,y\}$. 
Then $\edgeinf{x}{y}{y}{x}{z}{C_1}\cap\bd{W_{C_2}}\subseteq\{\ray{x}{y},\ray{y}{x}\}$.
\end{lemma}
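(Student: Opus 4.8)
The plan is to reduce the statement to an intrinsic fact about the circle $\bd{W_{C_1}}$ and then exploit the planar picture of $\Sigma_{C_1}$ from Proposition~\ref{f:brzegcyklu}. The engine is the following general principle: for full subcomplexes $K_1,K_2\subseteq N$ one has $\bd{W_{K_1}}\cap\bd{W_{K_2}}=\bd{W_{K_1\cap K_2}}$. Indeed $K_1\cap K_2$ is again a full subcomplex, and on the level of Davis complexes $\Sigma_{K_1}\cap\Sigma_{K_2}=\Sigma_{K_1\cap K_2}$, since the vertex set of the intersection is $W_{K_1}\cap W_{K_2}=W_{K_1\cap K_2}$ (intersections of special subgroups are special) and the cubes match accordingly. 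By Fact~\ref{f:brzegpodkpl}(i) each $\Sigma_{K_i}$ is convex and contains the basepoint $e$, so by uniqueness of $\catz$ geodesics a boundary point lies in $\bd{W_{K_i}}$ exactly when the ray joining $e$ to it stays in $\Sigma_{K_i}$; hence a point of $\bd{W_{K_1}}\cap\bd{W_{K_2}}$ is represented by a ray lying in $\Sigma_{K_1}\cap\Sigma_{K_2}=\Sigma_{K_1\cap K_2}$, which gives the nontrivial inclusion (the reverse being immediate from Fact~\ref{f:brzegpodkpl}(ii)).

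Applying this with $K_1=C_1$, $K_2=C_2$ gives $\bd{W_{C_1}}\cap\bd{W_{C_2}}=\bd{W_{C_1\cap C_2}}$. Write $A:=\edge{x}{y}{z}{C_1}$ and let $A':=\edge{x}{y}{w}{C_1}$ be the complementary arc for some vertex $w$ of $C_1$ off $A$, so that $A\cap A'=\{x,y\}$. The subcomplex $C_1\cap C_2$ is full in the cycle $C_1$, hence an induced subgraph, and the hypothesis $\edge{x}{y}{z}{C_1}\cap C_2\subseteq\{x,y\}$ says exactly that it has no vertex in the interior of $A$; therefore $C_1\cap C_2\subseteq A'$ and $\bd{W_{C_1\cap C_2}}\subseteq\bd{W_{A'}}$. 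Since $\edgeinf{x}{y}{y}{x}{z}{C_1}\subseteq\bd{W_{C_1}}$, intersecting with $\bd{W_{C_2}}$ and using the displayed identity reduces the lemma to showing that the open arc $\edgeinf{x}{y}{y}{x}{z}{C_1}\setminus\{\ray{x}{y},\ray{y}{x}\}$ is disjoint from $\bd{W_{A'}}$.

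For this last step I would use a separating geodesic. The line $\ell:=\Sigma_{\{x,y\}}$ is the axis of the infinite dihedral group $W_{\{x,y\}}$; it is a convex bi-infinite geodesic in the disk $\Sigma_{C_1}$ (Proposition~\ref{f:brzegcyklu}) whose two ends converge to $\ray{x}{y}$ and $\ray{y}{x}$, and these two points cut the circle $\bd{W_{C_1}}$ into exactly the arc $\edgeinf{x}{y}{y}{x}{z}{C_1}$ (the one through the block of $z$) and its complement. Because $\ell$ separates the disk, the plan is to show that $\Sigma_A$ and $\Sigma_{A'}$ lie in the two closed half-disks it bounds. Near $e$ this is visible in the link, which is $C_1$ (Remark~\ref{u:komdav}(ii)): the directions of $\ell$ are the two link-vertices $x,y$, and the interiors of $A$ and of $A'$ fill the two arcs of the link they cut out; moreover $A\cap A'=\{x,y\}$ gives $\Sigma_A\cap\Sigma_{A'}=\ell$ by the engine above. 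Granting that the two convex subcomplexes do not cross to the opposite sides, $\bd{W_{A'}}$ is contained in the closed arc complementary to $\edgeinf{x}{y}{y}{x}{z}{C_1}$ and meets the latter only in its endpoints $\ray{x}{y},\ray{y}{x}$, which is precisely the required disjointness.

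The main obstacle is exactly this separation claim: that $\Sigma_{A'}$ does not bulge across $\ell$ into the half-disk carrying the block of $z$. Convexity of $\Sigma_{A'}$ together with $\Sigma_A\cap\Sigma_{A'}=\ell$ pins this down locally at $e$, but promoting it to a global statement is the delicate part, and I expect to need the explicit cyclic ordering of blocks set up before the lemma: a geodesic ray in $\Sigma_{A'}$ visits only vertices of $A'$, and one must verify that its position on the circle, read off from this combinatorial data, never enters the open arc on the $z$-side. The tightest case is a ray whose initial letter is one of the shared vertices $x,y$, where the within-block order must be compared against the endpoints $\ray{x}{y},\ray{y}{x}$ themselves; here the description of the ordering at the start of the subsection (equivalently, the planar embedding of $\Sigma_{C_1}$ from Proposition~\ref{f:brzegcyklu}) is what ultimately fixes the side.
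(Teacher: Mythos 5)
Your scaffolding is sound and close to the paper in spirit: the identity $\bd{W_{K_1}}\cap\bd{W_{K_2}}=\bd{W_{K_1\cap K_2}}$ for full subcomplexes is correct (the paper itself records the instance $\bd{W_{C_1}}\cap\bd{W_{C_2}}=\bd{W_{C_1\cap C_2}}$ in the caption of its figure), the reduction to showing that the open arc $\edgeinf{x}{y}{y}{x}{z}{C_1}\setminus\{\ray{x}{y},\ray{y}{x}\}$ misses $\bd{W_{A'}}$ is valid, and the separating geodesic $\ell=\Sigma_{\{x,y\}}$ is exactly the right object. But the proof stops precisely where the lemma's mathematical content begins. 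You write ``granting that the two convex subcomplexes do not cross to the opposite sides'' and then concede that promoting the local picture at $e$ to a global statement is ``the delicate part'' which you ``expect'' to handle via the cyclic ordering of blocks. That separation claim \emph{is} the lemma; nothing you establish implies it. In particular, convexity cannot close the gap: $\Sigma_{C_1}$ itself is a convex subcomplex containing $\ell$ that meets both half-disks, so convexity of $\Sigma_{A'}$ together with $\Sigma_A\cap\Sigma_{A'}=\ell$ is compatible with $\Sigma_{A'}$ spilling over to the $z$-side. Nor does the link decomposition at $e$ propagate for free, because the combinatorics at the vertices along $\ell$ changes as you move.

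The missing ingredient is the paper's key observation, which is a statement about \emph{labels}, not just sides: every 2-cube of $\Sigma_{C_1}$ meeting $\ell$ on one side has all its edges labelled by vertices of $\edge{x}{y}{z}{C_1}$, and every such cube on the other side is labelled by the complementary arc. This is proved by an order-reversal induction along $\ell$: at $e$ the outgoing edges occur in the cyclic order $x,a_1,\ldots,a_l,y,z_1,\ldots,z_k$, and passing through an edge of $\ell$ (say the edge labelled $x$, to which the 2-cube with labels $x,a_1$ is attached) reverses this cyclic order, so the two sides of $\ell$ carry consistent label sets all along it. With that observation the paper finishes directly, without ever isolating $A'$: a ray $\varrho\subseteq\Sigma_{C_1}\cap\Sigma_{C_2}$ with $[\varrho]$ in the open arc must eventually depart from $\ell$, and it must depart on the $z$-side (that is where its limit point lies), yet its edge labels lie in $C_2$, which by hypothesis avoids the interior of $\edge{x}{y}{z}{C_1}$, forcing departure on the other side --- a contradiction. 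To turn your sketch into a proof you would need to supply this labelling/order-reversal argument (or an equivalent one); as submitted, the proposal has a genuine gap at its crux.
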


\begin{proof}
The key to the proof is the following observation on the set of 2-cubes of the complex $\Sigma_{C_1}$ that intersect the 
geodesic line
$\Sigma_{\{x,y\}}$:
on one side of the geodesic all such 2-cubes have edges labelled with vertices of the arc $\edge{x}{y}{z}{C_1}$, on the other side all such 2-cubes have edges labelled with vertices of the other arc in $C_1$ that has endpoints 
$x,y$.
Indeed, denote the vertices of $C_1$ in a cyclic order as $x,a_1,\ldots,a_l,y,z_1,\ldots,z_k$ (where $z\in\{z_1,\ldots,z_k\}$).
Consider the 
identity
vertex $e\in\Sigma_{C_1}$. 
The outgoing edges are labelled with $x,a_1,\ldots,a_l,y,z_1,\ldots,z_k$ (in a cyclic order). Consider the vertex $x\in\Sigma_{C_1}$. 
It is connected to the vertex $e$ by an edge labelled $x$, to which the 2-cube with edges labelled $x,a_1$ is attached, therefore the 
cyclic order of the edges incident to the vertex $x$ is opposite to the cyclic order of the edges incident to $e$.
This argument 
proves the observation locally. 
To finish, one may proceed by induction using a similar argument. 
 
Consider the geodesic ray $\varrho$ such that the point $[\varrho]$ is in
$\edgeinf{x}{y}{y}{x}{z}{C_1}\cap\bd{W_{C_2}}$.
By Proposition \ref{f:brzegpodkpl}, $\varrho$ is contained in the complex $\Sigma_{C_1}$.
Assume that $[\varrho]\neq\ray{x}{y},\ray{y}{x}$. 
Then there exists $t_0\geq0$ such that $\varrho([0,t_0])\subseteq\Sigma_{\{x,y\}}$ and $\varrho((t_0,\infty))\subseteq\Sigma_{C_1}\setminus\Sigma_{\{x,y\}}$.
By the key observation, 
since $[\varrho]\in\edgeinf{x}{y}{y}{x}{z}{C_1}$ and $\varrho\subseteq\Sigma_{C_1}\cap\Sigma_{C_2}$, 
it follows 
that $\varrho((t_0,\infty))$ is contained in both connected components of $\Sigma_{C_1}\setminus\Sigma_{\{x,y\}}$.
Contradiction.
\end{proof}

\begin{figure}
	\begin{tikzpicture}
\begin{scope}[xshift=-140,yshift=50]
\begin{scope}[scale=0.25,rotate=-90]
\newcommand{\rone}{5}
\newcommand{\ang}{60}

\draw (0,0) circle (\rone);
\foreach \x/\n in {1/$a_2$,2/$a_1$,3/$x$,4/$z_2$,5/$z_1$,6/$y$}
{
	\filldraw (\x*\ang:\rone) circle (6pt);
	\path (\x*\ang:\rone*1.18) node {\footnotesize {\n}};
}
\end{scope}
\end{scope}

\begin{scope}[rotate=-90,scale=0.65]
\newcommand{\ang}{60}
\newcommand{\scloc}{1.2}

\ifcolor
\newcommand{\colleft}{bro}
\newcommand{\colright}{gray}
\else
\newcommand{\colleft}{gray}
\newcommand{\colright}{darkgray}
\fi

\newcommand{\rout}{5}

\filldraw[fill=\colleft!30] (3*\ang:\rout) arc (3*\ang:6*\ang:\rout) -- cycle;

\newcommand{\bendval}{12}
\newcommand{\canlenone}{0.25*\ang};
\newcommand{\bendvaltwo}{0.5*\bendval}
\newcommand{\candivone}{0.26}
\newcommand{\candivmida}{0.33}
\newcommand{\candivtwo}{0.40}
\newcommand{\candivthree}{0.60}
\newcommand{\candivmidb}{0.67}
\newcommand{\candivfour}{0.74}
\newcommand{\canrtwoa}{3.3}\newcommand{\canrtwob}{3.5}

\newcommand{\rone}{1*\scloc}
\newcommand{\rtwo}{1.5*\scloc}
\newcommand{\offstwo}{\ang*0.5}
\newcommand{\rthree}{2.0*\scloc}
\newcommand{\devthree}{15}
\newcommand{\rfour}{2.5*\scloc}
\newcommand{\devfour}{0}
\newcommand{\devfourmid}{10}

\filldraw[color=\colright!45] 
(6*\ang:\rout) 
arc (6*\ang:7*\ang-\canlenone-\candivfour*\ang+\candivfour*\canlenone:\rout) 
to[bend left=\bendvaltwo] (\ang-\canlenone-\candivmidb*\ang+\candivmidb*\canlenone:\canrtwoa) 
to[bend left=\bendvaltwo] (\ang-\canlenone-\candivthree*\ang+\candivthree*\canlenone:\rout) 
arc (\ang-\canlenone-\candivthree*\ang+\candivthree*\canlenone:\ang-\canlenone-\candivtwo*\ang+\candivtwo*\canlenone:\rout) 
to[bend left=\bendvaltwo] (\ang-\canlenone-\candivmida*\ang+\candivmida*\canlenone:\canrtwoa) 
to[bend left=\bendvaltwo] (\ang-\canlenone-\candivone*\ang+\candivone*\canlenone:\rout)
arc (\ang-\canlenone-\candivone*\ang+\candivone*\canlenone:\ang-\canlenone:\rout) 
to[bend left=\bendval] (6*\ang+\offstwo:\rtwo)
-- (1*\ang:\rone) 
to[bend left=\bendval] (1*\ang:\rout)		
arc (1*\ang:\ang+\candivone*0.5*\ang-\candivone*0.5*\canlenone:\rout) 
to[bend left=\bendvaltwo] (\ang+\candivmida*0.5*\ang-\candivmida*0.5*\canlenone:\canrtwob) 
to[bend left=\bendvaltwo] (\ang+\candivtwo*0.5*\ang-\candivtwo*0.5*\canlenone:\rout) 
arc (\ang+\candivtwo*0.5*\ang-\candivtwo*0.5*\canlenone:\ang+\candivthree*0.5*\ang-\candivthree*0.5*\canlenone:\rout) 
to[bend left=\bendvaltwo] (\ang+\candivmidb*0.5*\ang-\candivmidb*0.5*\canlenone:\canrtwob) 
to[bend left=\bendvaltwo] (\ang+\candivfour*0.5*\ang-\candivfour*0.5*\canlenone:\rout)
arc (\ang+\candivfour*0.5*\ang-\candivfour*0.5*\canlenone:1*\ang+\offstwo-0.5*\canlenone:\rout) 
to[bend left=\bendval] (1*\ang+\offstwo:\rtwo) 
to[bend left=\bendval] (1*\ang+\offstwo+0.5*\canlenone:\rout)
arc (1*\ang+\offstwo+0.5*\canlenone:2*\ang-\candivfour*0.5*\ang+\candivfour*0.5*\canlenone:\rout) 
to[bend left=\bendvaltwo] (2*\ang-\candivmidb*0.5*\ang+\candivmidb*0.5*\canlenone:\canrtwob) 
to[bend left=\bendvaltwo] (2*\ang-\candivthree*0.5*\ang+\candivthree*0.5*\canlenone:\rout)
arc (2*\ang-\candivthree*0.5*\ang+\candivthree*0.5*\canlenone:2*\ang-\candivtwo*0.5*\ang+\candivtwo*0.5*\canlenone:\rout) 
to[bend left=\bendvaltwo] (2*\ang-\candivmida*0.5*\ang+\candivmida*0.5*\canlenone:\canrtwob) 
to[bend left=\bendvaltwo] (2*\ang-\candivone*0.5*\ang+\candivone*0.5*\canlenone:\rout)
arc (2*\ang-\candivone*0.5*\ang+\candivone*0.5*\canlenone:2*\ang:\rout) 
to[bend left=\bendval] (2*\ang:\rone)
-- (2*\ang+\offstwo:\rtwo) 
to[bend left=\bendval] (2*\ang+\canlenone:\rout)
arc (2*\ang+\canlenone:2*\ang+\canlenone+\candivone*\ang-\candivone*\canlenone:\rout) 
to[bend left=\bendvaltwo] (2*\ang+\offstwo+\devfour:\rfour)  
to[bend left=\bendvaltwo] (2*\ang+\canlenone+\candivtwo*\ang-\candivtwo*\canlenone:\rout)
arc (2*\ang+\canlenone+\candivtwo*\ang-\candivtwo*\canlenone:3*\ang-\ang+\canlenone+\candivthree*\ang-\candivthree*\canlenone:\rout) 
to[bend left=\bendvaltwo] (3*\ang-\devthree:\rthree) 
-- (3*\ang-\devfourmid:\rfour) 
to[bend left=\bendvaltwo] (3*\ang-\ang+\canlenone+\candivfour*\ang-\candivfour*\canlenone:\rout)
arc (3*\ang-\ang+\canlenone+\candivfour*\ang-\candivfour*\canlenone:3*\ang:\rout)
-- (6*\ang:\rout);

\filldraw (0,0) circle (2pt);

\foreach \x/\n in {1/$a_2$,2/$a_1$,3/$x$,4/$z_2$,5/$z_1$,6/$y$}
{
	\filldraw (\x*\ang:\rone) circle (2pt);
	\draw (0,0) -- (\x*\ang:\rone) node[midway] {\tiny {\n}};		
}

\foreach \x in {1,2,3,4,5,6}
{
	\filldraw (\x*\ang+\offstwo:\rtwo) circle (2pt);
	\draw (\x*\ang:\rone) -- (\x*\ang+\offstwo:\rtwo) -- (\x*\ang+\ang:\rone);
}

\foreach \x in {2}
{
	\draw (\x*\ang+\offstwo:\rtwo) -- (\x*\ang+\ang:\rone) node[midway] {\tiny {$a_1$}};
}

\foreach \x in {3}
{
	\draw (\x*\ang:\rone) -- (\x*\ang+\offstwo:\rtwo) node[midway] {\tiny {$z_2$}};
}

\foreach \x in {3}
{
	\filldraw (\x*\ang-\devthree:\rthree) circle (2pt);
	\filldraw (\x*\ang:\rthree) circle (2pt);
	\filldraw (\x*\ang+\devthree:\rthree) circle (2pt);
	\draw (\x*\ang:\rone) -- (\x*\ang-\devthree:\rthree) node[near end] {\tiny {$a_2$}};
	\draw (\x*\ang:\rone) -- (\x*\ang+\devthree:\rthree) node[near end] {\tiny {$z_1$}};
	\draw (\x*\ang:\rone) -- (\x*\ang:\rthree) node[midway] {\tiny {$y$}};
}

\foreach \x in {2}
{
	\filldraw (\x*\ang+\offstwo+\devfour:\rfour) circle (2pt);
	
	\draw (\x*\ang+\offstwo:\rtwo) -- (\x*\ang+\offstwo+\devfour:\rfour) -- (\x*\ang+\ang-\devthree:\rthree); 
}

\foreach \x in {3}
{
	\filldraw (\x*\ang+\offstwo-\devfour:\rfour) circle (2pt);
	
	\draw (\x*\ang+\devthree:\rthree) -- (\x*\ang+\offstwo-\devfour:\rfour) -- (\x*\ang+\offstwo:\rtwo); 
}

\foreach \x in {3}
{
	\filldraw (\x*\ang-\devfourmid:\rfour) circle (2pt);
	\filldraw (\x*\ang+\devfourmid:\rfour) circle (2pt);
	\draw (\x*\ang-\devthree:\rthree) -- (\x*\ang-\devfourmid:\rfour) -- (\x*\ang:\rthree) -- (\x*\ang+\devfourmid:\rfour) -- (\x*\ang+\devthree:\rthree);
}

\draw[thick] (0,0) circle (\rout);
\draw[very thick,color=\colleft!75!black] (3*\ang:\rout) arc (3*\ang:6*\ang:\rout);
\draw[thick] (3*\ang:\rout) -- (6*\ang:\rout); 
\filldraw (3*\ang:\rout) circle (2pt) node[above] {{\footnotesize $\ray{x}{y}$}};
\filldraw (6*\ang:\rout) circle (2pt) node[below] {\footnotesize {$\ray{y}{x}$}};


\foreach \x in {1}
{
	\draw (\x*\ang:\rone) to[bend left=\bendval] (\x*\ang:\rout); ;		
}

\foreach \x in {2}
{
	\draw (\x*\ang:\rout) to[bend left=\bendval] (\x*\ang:\rone);		
}

\foreach \x in {6}
{
	\draw (\x*\ang+\ang-\canlenone:\rout) to[bend left=\bendval] (\x*\ang+\offstwo:\rtwo);
}

\foreach \x in {1}
{
	\draw (\x*\ang+\offstwo-0.5*\canlenone:\rout) to[bend left=\bendval] (\x*\ang+\offstwo:\rtwo) to[bend left=\bendval] (\x*\ang+\offstwo+0.5*\canlenone:\rout);
}

\foreach \x in {2}
{
	\draw (\x*\ang+\offstwo:\rtwo) to[bend left=\bendval] (\x*\ang+\canlenone:\rout);
}

\foreach \x in {2}
{
	\draw (\x*\ang+\canlenone+\candivone*\ang-\candivone*\canlenone:\rout) to[bend left=\bendvaltwo] (\x*\ang+\offstwo+\devfour:\rfour)  to[bend left=\bendvaltwo] (\x*\ang+\canlenone+\candivtwo*\ang-\candivtwo*\canlenone:\rout); 
}

\foreach \x in {3}
{
	\draw (\x*\ang-\ang+\canlenone+\candivthree*\ang-\candivthree*\canlenone:\rout) to[bend left=\bendvaltwo] (\x*\ang-\devthree:\rthree) -- (\x*\ang-\devfourmid:\rfour) to[bend left=\bendvaltwo] (\x*\ang-\ang+\canlenone+\candivfour*\ang-\candivfour*\canlenone:\rout);	
}

\draw (\ang-\canlenone-\candivfour*\ang+\candivfour*\canlenone:\rout) to[bend left=\bendvaltwo] (\ang-\canlenone-\candivmidb*\ang+\candivmidb*\canlenone:\canrtwoa) to[bend left=\bendvaltwo] (\ang-\canlenone-\candivthree*\ang+\candivthree*\canlenone:\rout); 

\draw (\ang-\canlenone-\candivtwo*\ang+\candivtwo*\canlenone:\rout) to[bend left=\bendvaltwo] (\ang-\canlenone-\candivmida*\ang+\candivmida*\canlenone:\canrtwoa) to[bend left=\bendvaltwo] (\ang-\canlenone-\candivone*\ang+\candivone*\canlenone:\rout); 

\draw (\ang+\candivone*0.5*\ang-\candivone*0.5*\canlenone:\rout) to[bend left=\bendvaltwo] (\ang+\candivmida*0.5*\ang-\candivmida*0.5*\canlenone:\canrtwob) to[bend left=\bendvaltwo] (\ang+\candivtwo*0.5*\ang-\candivtwo*0.5*\canlenone:\rout);

\draw (\ang+\candivthree*0.5*\ang-\candivthree*0.5*\canlenone:\rout) to[bend left=\bendvaltwo] (\ang+\candivmidb*0.5*\ang-\candivmidb*0.5*\canlenone:\canrtwob) to[bend left=\bendvaltwo] (\ang+\candivfour*0.5*\ang-\candivfour*0.5*\canlenone:\rout);

\draw (2*\ang-\candivfour*0.5*\ang+\candivfour*0.5*\canlenone:\rout) to[bend left=\bendvaltwo] (2*\ang-\candivmidb*0.5*\ang+\candivmidb*0.5*\canlenone:\canrtwob) to[bend left=\bendvaltwo] (2*\ang-\candivthree*0.5*\ang+\candivthree*0.5*\canlenone:\rout);

\draw (2*\ang-\candivtwo*0.5*\ang+\candivtwo*0.5*\canlenone:\rout) to[bend left=\bendvaltwo] (2*\ang-\candivmida*0.5*\ang+\candivmida*0.5*\canlenone:\canrtwob) to[bend left=\bendvaltwo] (2*\ang-\candivone*0.5*\ang+\candivone*0.5*\canlenone:\rout);

\end{scope}

\end{tikzpicture}
	\centering
	\caption{The situation in the proof of Lemma \ref{l:przeccyk} in case when the nerve $C_1$ is a 6-cycle 
	and $(C_1\cap C_2)^{(0)}=\{x,a_1,a_2,y\}$ (one may show then that $\Sigma_{C_1\cap C_2}(=\Sigma_{C_1}\cap\Sigma_{C_2})$ resembles the Cantor tree and $\bd{W_{C_1\cap C_2}}(=\bd{W_{C_1}}\cap\bd{W_{C_2}})$ is topologically the Cantor set). 
	In lighter grey we marked the fragment of the complex $\Sigma_{C_1}$, through which the geodesics giving the arc $\edgeinf{x}{y}{y}{x}{z_1}{C_1}$ go and this arc in the boundary $\bd{W_{C_1}}$, in darker grey we marked the complex $\Sigma_{C_1\cap C_2}$.}
	\label{fig:przeccyk}
\end{figure}
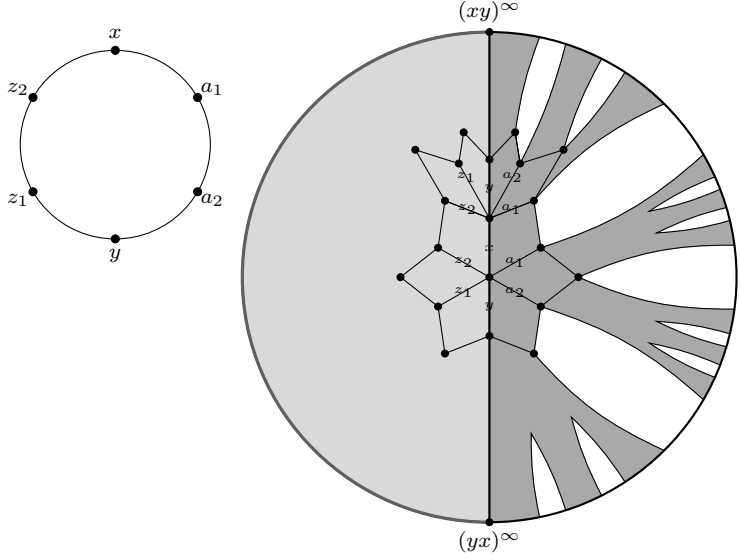

\begin{figure}
	\iflight
	\begin{tikzpicture}
\newcommand{\rone}{5}
\newcommand{\ang}{72}
\ifcolor
\newcommand{\colone}{red}
\newcommand{\coltwo}{blue}
\newcommand{\colthree}{darkgreen}
\newcommand{\colfour}{afb}
\newcommand{\colfive}{bro}
\else
\newcommand{\colone}{black!15}
\newcommand{\coltwo}{black!15}
\newcommand{\colthree}{black!50}
\newcommand{\colfour}{black!15}
\newcommand{\colfive}{black!15}
\fi

\begin{scope}[xshift=-150,yshift=-50]
\begin{scope}[rotate=90,scale=0.29]
\draw (0,0) circle (\rone);

\foreach \x/\c in {1/\colone,2/\coltwo,3/\colthree,4/\colfour,5/\colfive}
{
	\draw[color=\c] (\x*\ang:\rone) -- (\x*\ang+2*\ang:\rone);
}

\foreach \x in {1,2,3,4,5}
{
	\filldraw (\x*\ang:\rone) circle (4pt);
}

\foreach \x in {1,3,4}
{
	\path (\x*\ang-\ang:\rone*1.17) node {\footnotesize {$x_{\x}$}};
} 

\foreach \x in {1,2,3}
{
	\filldraw (0.25*\ang*\x:\rone) circle (2pt);
}

\foreach \x in {1,2}
{
	\filldraw (2*\ang+0.33*\ang*\x:\rone) circle (2pt);
}

\filldraw[color=\colone] ($0.5*(\ang:\rone)+0.5*(3*\ang:\rone)$) circle (2pt); 

\filldraw[color=\coltwo] ($0.75*(2*\ang:\rone)+0.25*(4*\ang:\rone)$) circle (2pt); 
\filldraw[color=\coltwo] ($0.5*(2*\ang:\rone)+0.5*(4*\ang:\rone)$) circle (2pt); 
\filldraw[color=\coltwo] ($0.25*(2*\ang:\rone)+0.75*(4*\ang:\rone)$) circle (2pt); 

\filldraw[color=\colthree] ($0.33*(3*\ang:\rone)+0.67*(5*\ang:\rone)$) circle (2pt); 
\filldraw[color=\colthree] ($0.67*(3*\ang:\rone)+0.33*(5*\ang:\rone)$) circle (2pt); 

\filldraw[color=\colfour] ($0.5*(4*\ang:\rone)+0.5*(6*\ang:\rone)$) circle (2pt);

\path (55:1.4*\rone) node {\Large {$\Gamma$}};
\end{scope}
\end{scope}

\begin{scope}[xshift=-150,yshift=50]
\begin{scope}[rotate=90,scale=0.22]
\draw (0,0) circle (\rone);

\foreach \x/\c in {1/\colone,2/\coltwo,3/\colthree,4/\colfour,5/\colfive}
{
	\draw[color=\c] (\x*\ang:\rone) -- (\x*\ang+2*\ang:\rone);
}

\foreach \x in {1,2,3,4,5}
{
	\filldraw (\x*\ang:\rone) circle (4pt);
}

\path (55:1.4*\rone) node {\Large {$G$}};

\end{scope}
\end{scope}

\begin{scope}
\begin{scope}[rotate=90,scale=0.5]
\newcommand{\dev}{15}

\foreach \x in {1,2,3,4,5}
{
	\draw[dashed] (\x*\ang-\dev:\rone) arc (\x*\ang-\dev:\x*\ang+\dev:\rone);
	\draw (\x*\ang+\dev:\rone) arc (\x*\ang+\dev:\x*\ang+\ang-\dev:\rone);	
}  

\foreach \x/\c in {1/\colone,2/\coltwo,3/\colthree,4/\colfour,5/\colfive}
{
	\draw[color=\c] (\x*\ang+\dev:\rone) to[bend left] (\x*\ang+2*\ang-\dev:\rone);
}

\foreach \x in {1,2,3,4,5}
{
	\filldraw (\x*\ang-\dev:\rone) circle (2pt);
	\filldraw (\x*\ang+\dev:\rone) circle (2pt);
}

\path (\dev:\rone*1.1) node {\footnotesize {$\ray{x_1}{x_3}$}};
\path (-\dev:\rone*1.1) node {\footnotesize {$\ray{x_1}{x_4}$}};
\path (2*\ang-\dev:\rone*1.13) node {\footnotesize {$\ray{x_3}{x_1}$}};
\path (3*\ang+\dev:\rone*1.15) node {\footnotesize {$\ray{x_4}{x_1}$}};

\path (-5:1.24*\rone) node[right] {\Large {$H\subseteq\bd{W_N}$}};
\end{scope}
\end{scope}

\end{tikzpicture}
	\else
	\includegraphics[width=0.9\textwidth]{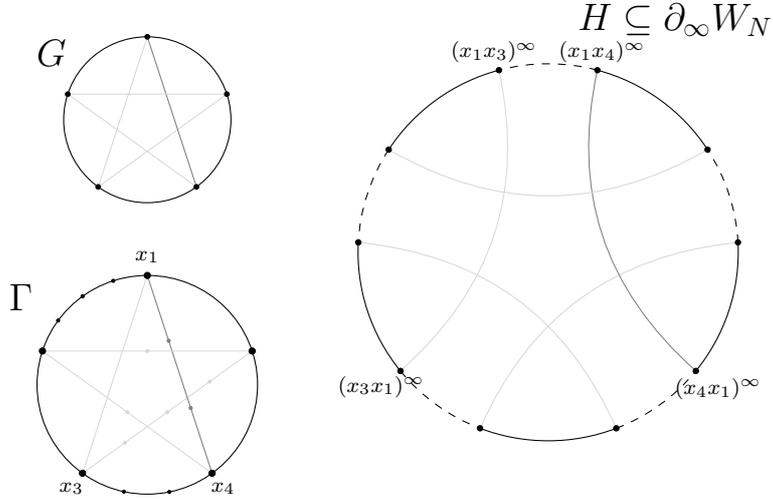}
	\fi
	\centering
	\caption{\emph{Left:} Situation in the assumption of Theorem \ref{t:nieplan}: subcomplex $\Gamma$, which is an edge subdivision of some graph $G\cong\kapi$. \emph{Right:} The embedding of the graph $H$, which is constructed in the proof. We marked the cycle $C$ (in $\Gamma$), the cycle $D$ (in $G$) and the edges of type Ia (in $H$) with continuous black lines, and the edges of type Ib (in $H$) with dashed lines. In grey we marked the arcs $S_i$ (in $\Gamma$), the edges corresponding to them in $G$ and the edges of type II (in $H$).
	In dark grey we marked one edge of $G$, and a path in $\Gamma$ and an arc in $H$ that correspond to this edge.
}
	\label{fig:tnieplan}
\end{figure}

\subsection{Construction}\label{s:nieplandowod}

Let us introduce two definitions.

\begin{definition}
\begin{enumerate}[(i)]
\item A graph is a \emph{weak minor} of the graph $G$, if it can be obtained from the graph $G$ by a sequence of edge contractions. 

\item A graph is an \emph{edge subdivision} of the graph $G$, if it can be obtained from $G$ by subdividing each of its edges (we 
use the
convention that a trivial subdivision is a subdivision).   
\end{enumerate}
\end{definition}

\begin{remark}
If some non-planar graph is a weak minor of the graph $G$, then $G$ itself is non-planar.
\end{remark}

Recall that a graph is \emph{simple} if it has no loops and no multiple edges; \emph{Hamiltonian} if it contains a \emph{Hamiltonian cycle}, that is, a cycle that passes 
through each 
vertex exactly once.

\begin{theorem}\label{t:nieplan}
Let $\Gamma$ be an edge subdivision of a Hamiltonian simple graph $G$ whose all vertices have degree at least $3$. Assume that $\Gamma$ decomposes into a cycle $C$, which is an edge subdivision of some Hamiltonian cycle $D$ in $G$, and a collection of paths $S_1,\ldots,S_k$ that intersect $C$ only at their both endpoints. Suppose that $N$ is a nerve whose $1$-skeleton contains $\Gamma$, that $C$ is a full subcomplex of $N$ (in particular, each $S_i$ has at least two edges), and that there exists a collection of arcs $L_1,\ldots,L_k$ contained in $C$ such that for each $i$ the arc $L_i$ has the same endpoints as $S_i$ and $S_i \cup L_i$ is a full subcomplex of $N$. Then some graph $H$ such that $G$ is a weak minor of $H$ embeds into the boundary $\partial_\infty W_N$.
\end{theorem}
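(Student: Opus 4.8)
The plan is to build $H$ as the union of the big circle $\bd{W_C}$ with one arc for each path $S_i$, where the $i$-th arc lives on the circle coming from the cycle $S_i\cup L_i$, and then to recover $G$ by contracting, inside the big circle, the blocks corresponding to the vertices of $G$. Throughout write $\mathcal C:=\bd{W_C}$, which is a circle by Proposition \ref{f:brzegcyklu} since $C$ is a full cycle of length $\geq 4$ (all vertices of $G$ have degree $\geq 3$, so $G$, hence $D$, has at least four vertices).

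First I set up the chord circles. For each $i$ let $u_i,w_i$ be the common endpoints of $S_i$ and $L_i$, and put $T_i:=S_i\cup L_i$. Since $S_i$ meets $C$ only in $\{u_i,w_i\}$ and $L_i\subseteq C$, the complex $T_i$ is a cycle, and by hypothesis it is a full subcomplex of $N$; so by Proposition \ref{f:brzegcyklu} its boundary $\mathcal T_i:=\bd{W_{T_i}}$ is a circle. Because $T_i$ is a full cycle and $S_i$ has at least two edges, $u_i$ and $w_i$ are non-adjacent in $N$ (an edge $u_iw_i$ would have to lie in the full complex $T_i$, contradicting that $T_i$ is a cycle), so the points $\ray{u_i}{w_i},\ray{w_i}{u_i}$ are defined and, by Proposition \ref{f:brzegpodkpl}, lie in $\mathcal C\cap\mathcal T_i$. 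Fixing an interior vertex $z_i$ of $S_i$, I let $A_i:=\edgeinf{u_i}{w_i}{w_i}{u_i}{z_i}{T_i}$ be the sub-arc of $\mathcal T_i$ running along the $S_i$-side, so that $A_i$ is an arc with endpoints $\ray{u_i}{w_i},\ray{w_i}{u_i}$.

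Next I control all intersections via Lemma \ref{l:przeccyk}. Since $\edge{u_i}{w_i}{z_i}{T_i}=S_i$ and $S_i\cap C=\{u_i,w_i\}$, applying the lemma with $(C_1,C_2)=(T_i,C)$ gives $A_i\cap\mathcal C\subseteq\{\ray{u_i}{w_i},\ray{w_i}{u_i}\}$, i.e.\ $A_i$ meets $\mathcal C$ exactly at its two endpoints. For two distinct indices I compute $S_i\cap T_j=(S_i\cap S_j)\cup(S_i\cap L_j)$; here $S_i\cap L_j\subseteq S_i\cap C=\{u_i,w_i\}$, and since $\Gamma$ subdivides the \emph{simple} graph $G$ the paths $S_i,S_j$ meet only at common endpoints, so $S_i\cap S_j\subseteq\{u_i,w_i\}$ as well. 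Thus $\edge{u_i}{w_i}{z_i}{T_i}\cap T_j\subseteq\{u_i,w_i\}$, and the lemma applied with $(C_1,C_2)=(T_i,T_j)$ and then with $(T_j,T_i)$ yields
\[
A_i\cap A_j\subseteq\{\ray{u_i}{w_i},\ray{w_i}{u_i}\}\cap\{\ray{u_j}{w_j},\ray{w_j}{u_j}\}=\varnothing,
\]
the last equality holding because distinct chords of the simple graph $G$ have distinct endpoint pairs. Hence $H:=\mathcal C\cup\bigcup_{i}A_i$ is a genuinely embedded finite graph in $\bd{W_N}$: the big circle $\mathcal C$ with $k$ pairwise-disjoint arcs $A_i$ attached at the $2k$ distinct points $\ray{u_i}{w_i},\ray{w_i}{u_i}$.

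Finally I identify $G$ as a weak minor. For each vertex $v$ of $G$ the attachment points incident to $v$ all have the form $\ray{v}{\,\cdot\,}$, so by the cyclic block-ordering of $\mathcal C$ described before Lemma \ref{l:przeccyk} they lie in one consecutive sub-arc (the block of $v$), and blocks of distinct vertices are disjoint. Contracting each such sub-arc to a point is a sequence of edge contractions; since every vertex of $G$ has degree $\geq 3$ it carries at least one chord, so no $G$-vertex is lost. After the contractions the arcs of $\mathcal C$ between blocks become the edges of $D$ and the arcs $A_i$ become the remaining edges of $G$, so $G$ is a weak minor of $H$, as required. The main obstacle is the intersection bookkeeping of the third paragraph: one must verify both that each $A_i$ reattaches to $\mathcal C$ only at its endpoints and that the $A_i$ are mutually disjoint, which is exactly what the repeated, carefully-oriented use of Lemma \ref{l:przeccyk} (choosing $z_i$ interior to $S_i$, and reducing $S_i\cap T_j$ to the shared endpoints) is designed to supply.
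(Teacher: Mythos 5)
Your proposal is correct and takes essentially the same route as the paper's proof: your vertices $\ray{u_i}{w_i},\ray{w_i}{u_i}$, your arcs of $\bd{W_C}$ and the arcs $A_i=\edgeinf{u_i}{w_i}{w_i}{u_i}{z_i}{S_i\cup L_i}$ are exactly the paper's type Ia/Ib and type II edges, the two intersection checks are the same two applications of Lemma \ref{l:przeccyk} (including the computation $A_i\cap A_j\subseteq\{\ray{u_i}{w_i},\ray{w_i}{u_i}\}\cap\{\ray{u_j}{w_j},\ray{w_j}{u_j}\}=\emptyset$), and contracting the block sub-arcs is the paper's contraction of the type Ib edges. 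One small repair: your parenthetical justification that $u_i,w_i$ are non-adjacent is incomplete, since an edge $u_iw_i$ lying in the full cycle $S_i\cup L_i$ is not by itself a contradiction with that complex being a cycle (a priori $L_i$ could be that single edge); the correct one-liner, as in the paper, is that fullness of $C$ would force such an edge to be an unsubdivided edge of the Hamiltonian cycle $D$ between $u_i$ and $w_i$, while $\{u_i,w_i\}$ already corresponds to the chord $S_i$, contradicting simplicity of $G$.
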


\begin{remark}\label{u:tnieplan}
If the graph $\Gamma$ is a full subcomplex of $N$ and we choose a cycle $C$, then the condition that $S_i\cup L_i$ is a full subcomplex of $N$ becomes trivial, so the choice of the $L_i$ does not matter. 
Additionally, if we assume that 
for
each edge of $G$ there 
is a corresponding
path of length at least $2$ in $\Gamma$, then the choice of the cycle $C$ does not matter. 
\end{remark}

\begin{definition}\label{d:nieplan}
A flag simplicial complex $N$ satisfying the assumption of Theorem \ref{t:nieplan} for some non-planar graph $G$ is called \emph{\sg-non-planar}.
\end{definition}

\begin{proof} ({\sc of Theorem \ref{t:nieplan}})
See Figure \ref{fig:tnieplan}. 
For simplicity, we do not distinguish a graph $H$ 
as in the 
claim
from its embedding in the boundary $\bd{W_N}$ and we identify the vertices of $G$ with the corresponding 
vertices of the graph $\Gamma$. 

\smallskip
\emph{Construction.} Let $x_1,\ldots,x_n$ be the vertices of the cycle $D$.
Define the vertex set of the graph $H$
to be 
$V_H=\{\ray{a_1}{b_1},\ray{b_1}{a_1},\ldots,\ray{a_k}{b_k},\ray{b_k}{a_k}\}$, where $a_i,b_i$ are the endpoints of $S_i$.
Notice that the rays are well defined since $C$ is a full subcomplex of $N$ and $G$ is simple. 
Clearly $V_H\subseteq\bd{W_C}$. 
The edges will come from two sources, we split them into three types.
The first source is $\bd{W_C}$ -- we take the subdivision of $\bd{W_C}$ induced by $V_H$. We obtain $2$ types of edges: for each $i$ we have the edge between a vertex from the block $x_i$ and a vertex from the block of $x_{i+1}$ 
(type Ia), and the edges between vertices from the 
block $x_i$ 
(type Ib). 
The second source are the subcomplexes $S_i\cup L_i$. For each path $S_i$ having endpoints $x,y$, fix any vertex $z\in S_i$ other than $x,y$ and add to $H$ an edge $\edgeinf{x}{y}{y}{x}{z}{S_i\cup L_i}$ (type II).

\smallskip
\emph{Correctness.}
Consider a pair $e_1,e_2$ of different edges. We will show that their intersection is either empty or consists of a common endpoint of $e_1$ and $e_2$. If both $e_1,e_2$ are of the types Ia or Ib, the claim is clear. If exactly one of them is of type II, the claim follows easily by Lemma \ref{l:przeccyk}.
Assume that $e_1,e_2$ correspond to two different paths $S_i,S_j$ (with endpoints $a_i,b_i,a_j,b_j$ respectively). Then, by Lemma \ref{l:przeccyk}, we have that $e_1\cap e_2\subseteq (e_1\cap\bd{W_{S_j\cup L_j}})\cap(\bd{W_{S_i\cup L_i}}\cap e_2)\subseteq\{\ray{a_i}{b_i},\ray{b_i}{a_i}\}\cap\{\ray{a_j}{b_j},\ray{b_j}{a_j}\}=\emptyset$. The last equality holds since $\{a_i,b_i\}\neq\{a_j,b_j\}$.

The graph $H$ is obtained from the graph $G$ in the following way.
We start with the cycle $D\subseteq G$. For each $i$ we 
replace
the vertex $x_i$ with a path $B_i$ having $\deg_Gx_i-2$ vertices.
Then the original edges of the cycle $D$ correspond to the edges of type Ia and the paths $B_i$ are realised by the edges of type Ib. 
Furthermore, 
for
each edge $\{x_i,x_j\}$ of the graph $G$ not belonging to the cycle $D$ 
there 
is a corresponding
edge between 
a vertex in $B_i$ and a vertex in $B_j$.
It follows that the graph $G$ is obtained from the graph $H$ by contracting all the edges of type Ib.
\end{proof}

\subsection{Proof of the main theorem}\label{c:dowod}

\begin{proof}{(\sc of Theorem \ref{t:main})}
We check that the boundary $\bd{W_N}$ satisfies the conditions from Proposition \ref{f:charmenger}.	

\smallskip
\emph{Metrisability, compactness.} See Remark \ref{u:wlbrzeg}\ref{u:wlbrzegzwme}.	
	
\smallskip
\emph{1-dimensionality.} We have the following equality: $\dim\bd{W_N}\!=\!\max\{n\!:\!\widetilde{H}^n(N)\neq0\;\mathrm{or}\;\widetilde{H}^n(N\setminus\Delta)\neq0\;\mathrm{for\;some\;simplex}\,\Delta\!\subseteq\! N\}$ (where $\widetilde{H}^*$ denotes the reduced cohomology), \cite[proof of Lemma 2.5]{Swiat16}.
By the assumption, $\dim\bd{W_N}\leq1$. 
If we had $\dim\bd{W_N}=0$, then $W_N$ would be virtually free,  \cite[Corollary 8.5.6]{DavBook08}. On the other hand, since $N$ is connected, 
not
a simplex and has no separating simplex, $W_N$ is 1-ended, \cite[Theorem 8.7.2]{DavBook08}, a contradiction.

\smallskip
\emph{Connectedness.} 
Since $W_N$ is $\catz$ and 1-ended, $\bd{W_N}$ is connected.

\smallskip
\emph{Local connectedness.} 
Since $W_N$ is 1-ended and hyperbolic, the boundary $\bd{W_N}$ has no global cut-points \cite{Swarup96,Bowditch99}, thus by \cite{BesMes91} it is locally connected.

\smallskip
\emph{No local cut-points.} 
By inseparability of $N$ and \cite{MicTsc09} the group $W_N$ does not split over a 2-ended or a finite subgroup.
By \cite{Bowditch98} the boundary $\bd{W_N}$ has no local cut-points or is a cocompact Fuchsian group.
In the latter case, by \cite[Theorem 10.9.2]{DavBook08}, $N$ is either a triangulation of $S^1$ or a join of a triangulation of $S^1$ with a simplex. Both of these cases contradict inseparability of the nerve $N$.

\smallskip
\emph{No planar open subsets.} By Theorem \ref{t:nieplan} there is an embedding of some non-planar graph $H$ into the boundary $\bd{W_N}$. By \cite[Lemma 7]{KaKl98}, if there were some planar neighbourhood of some point in the boundary $\bd{W_N}$, then the graph $H$ would embed into this neighbourhood.
\end{proof}

\begin{remark}\label{u:mainkonieczne}
\begin{enumerate}[(i)]
	\item By the formula for $\dim\bd{W_N}$ given in the above proof, the condition on cohomology in the assumption of Theorem \ref{t:main} is a necessary condition for the boundary $\bd{W_N}$ to be 1-dimensional. 
	\label{u:mainkonieczne1}	
	
	\item The condition that $N$ is inseparable and is not a simplex is necessary for the boundary $\bd{W_N}$ to be connected and have no local cut-points, \cite[Lemma 2.2]{Swiat16}.
	\label{u:mainkonieczne2}
	
	\item The observations from above two remarks and the proofs of compactness, metrisability, 1-dimensionality and connectedness do not require the assumption that the group $W_N$ is hyperbolic. 
	Omitting or weakening of the assumption of hyperbolicity in the other three parts of the proof seems to be a non-trivial task. 
	
	\item The boundary of a Coxeter group having a planar nerve is planar (this does not require hyperbolicity), \cite[Lemma 2.4]{Swiat16}, therefore non-planarity of the nerve $N$ is a necessary condition for the boundary $\bd{W_N}$ to be the Menger curve. 
	\label{u:mainkonieczne4}

\end{enumerate}
\end{remark}

\section{Applications}\label{c:prz}
In this section we use Theorem \ref{t:main} to find triangulations of some topological spaces, that give, as nerves, right-angled Coxeter groups with Menger curve boundary.

\begin{remark}
From now on, whenever we consider the boundary $\partial\sigma$ of a simplicial complex $\sigma$, the complex $\sigma$ is a triangulation of a manifold $M_\sigma$ with boundary, and by its boundary $\partial\sigma$ we mean the subcomplex of $\sigma$ that corresponds to the boundary $\partial M_\sigma$.   
In particular, if $\sigma$ is a subcomplex of some simplicial complex $\tau$, we do not mean the topological boundary of $\sigma$ in the space $\tau$.
\end{remark}

\subsection{Triangulations of 2-dimensional simplicial complexes}

\begin{definition}
	An edge of a 2-dimensional simplicial complex $X$ that is not contained in any face of $X$ is a  \emph{lonely edge}.
	\label{d:lonelyedge}   
\end{definition}

\begin{theorem}\label{t:kompleksy}
	Let $X$ be a 
	2-dimensional simplicial complex without lonely edges.
	Then the following are equivalent:
	\begin{enumerate}[(i)]
		\item $X$ admits a triangulation $N$ such that $N$ is flag and the boundary $\bd{W_N}$ is the Menger curve,
		\label{t:kompleksy1}
		
		\item $X$ admits infinitely many triangulations as in \ref{t:kompleksy1},
		\label{t:kompleksy2}
		
		\item $X$ is connected, non-planar, has no separating pair of points, and $H^2X=0$.
		\label{t:kompleksy3}
	\end{enumerate}
\end{theorem}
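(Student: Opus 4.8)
The plan is to prove $(\ref{t:kompleksy2})\Rightarrow(\ref{t:kompleksy1})\Rightarrow(\ref{t:kompleksy3})\Rightarrow(\ref{t:kompleksy2})$, the first implication being trivial, so that the content sits in the remaining two. Throughout I would use that a triangulation $N$ of $X$ has $|N|\cong X$, so every homeomorphism-invariant assertion passes freely between $X$ and $N$, and I would first dispose of the cohomological bookkeeping, which is clean because $X$ is $2$-dimensional. For $n\geq 3$ we have $H^n(N)=H^n(N\setminus\Delta)=0$ by dimension. For $n=2$, the long exact sequence of the pair $(N,N\setminus\Delta)$ gives $H^2(N)\to H^2(N\setminus\Delta)\to H^3(N,N\setminus\Delta)$, where the last group vanishes by excision (it is local near $\Delta$, hence computed in a $\leq 2$-dimensional closed star); so $H^2(N)=H^2X=0$ forces $H^2(N\setminus\Delta)=0$. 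Thus the full cohomological hypothesis of Theorem \ref{t:main} reduces, for $2$-complexes, to the single condition $H^2X=0$.

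\emph{The implication $(\ref{t:kompleksy1})\Rightarrow(\ref{t:kompleksy3})$.} Here I would read off the conclusions from the necessity statements collected in Remark \ref{u:mainkonieczne}. Since the Menger curve is $1$-dimensional, Remark \ref{u:mainkonieczne}(\ref{u:mainkonieczne1}) yields $H^2X=0$; since it is connected with no local cut-points, Remark \ref{u:mainkonieczne}(\ref{u:mainkonieczne2}) forces $N$ to be inseparable and not a simplex, whence $X$ is connected and (translating inseparability into topology) has no separating pair of points; and since it is nowhere planar, Remark \ref{u:mainkonieczne}(\ref{u:mainkonieczne4}) forces $N$, hence $X$, to be non-planar.

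\emph{The implication $(\ref{t:kompleksy3})\Rightarrow(\ref{t:kompleksy2})$.} This is the substance. Starting from an arbitrary triangulation of $X$ I would pass to a sufficiently fine subdivision $N$ and verify the five hypotheses of Theorem \ref{t:main}. Flagness is ensured by taking a barycentric subdivision; $N$ is not a simplex because $X$ is non-planar; and the cohomological hypothesis holds by the first paragraph. For hyperbolicity I would arrange the no-$\square$ condition of Remark \ref{u:tmain}(i) by subdividing finely enough that no four vertices span an empty $4$-cycle. For \sg-non-planarity I would use non-planarity of $X$ to locate, by a Kuratowski-type argument, a subdivision of $\katt$ or $\kapi$ in the $1$-skeleton, and then refine so that each of its edges becomes a path of length $\geq 2$ realised as a full subcomplex, which is precisely the sufficient form for Definition \ref{d:nieplan} recorded in Remark \ref{u:tmain}(iv). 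Inseparability I would derive from the hypothesis that $X$ has no separating pair of points. Theorem \ref{t:main} then gives that $\bd{W_N}$ is the Menger curve, and applying the construction to the iterated barycentric subdivisions of $X$ (pairwise non-isomorphic, since their simplex counts grow) produces infinitely many such triangulations, once one checks the five properties persist under further subdivision.

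\emph{Main obstacle.} The real work is the dictionary between the topological hypotheses on $X$ and the combinatorial hypotheses on $N$, above all the matching of ``$X$ has no separating pair of points'' with the inseparability of Remark \ref{u:tmain}(iii), which forbids not only separating vertices but also separating simplices and separating suspensions of simplices. The delicate point is that in a $2$-complex the faces can screen a genuine separation that the nerve sees: a configuration such as an edge lying in three faces is separable (its group is even virtually free, so the boundary is a Cantor set) and yet removing any two of its points leaves the space connected. Hence the non-separation condition must be read at the level the nerve perceives, through its vertices and simplices, and one must show, using the absence of lonely edges together with the (graph-level) non-planarity, that no such hidden separation survives; in particular this is what rules out the pathological configurations and what makes \sg-non-planarity attainable. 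For $(\ref{t:kompleksy1})\Rightarrow(\ref{t:kompleksy3})$ I would argue contrapositively, promoting a separation of $N$ to a separation of $X$, while for the construction I would conversely certify inseparability of a fine triangulation from the non-separation hypothesis on $X$. Pinning this correspondence down precisely, and confirming its compatibility with \sg-non-planarity, is the technical heart; by comparison the subdivision bookkeeping for flagness, the no-$\square$ condition and the cohomology is routine.
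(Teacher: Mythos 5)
Your global skeleton coincides with the paper's (the cycle $(\ref{t:kompleksy2})\Rightarrow(\ref{t:kompleksy1})\Rightarrow(\ref{t:kompleksy3})\Rightarrow(\ref{t:kompleksy2})$, the necessity directions read off from Remark \ref{u:mainkonieczne}, the Kuratowski-type extraction of a subdivided $\katt$ or $\kapi$, and verification of the hypotheses of Theorem \ref{t:main} for a refined triangulation; your reduction of the cohomological hypothesis via the long exact sequence of the pair is an acceptable equivalent of the paper's Mayer--Vietoris computation with the regular neighbourhood $U$). However, there is a genuine gap at the central technical step: barycentric subdivision does \emph{not} produce no-$\square$ complexes, and ``subdividing finely enough that no four vertices span an empty $4$-cycle'' is not a procedure --- refinement does not destroy empty $4$-cycles, it creates them. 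Concretely, if two $2$-simplices $ABC$ and $ABD$ share the edge $AB$, then in the barycentric subdivision the vertices $A$, $\hat{c}_1$ (barycenter of $ABC$), $B$, $\hat{c}_2$ (barycenter of $ABD$) span a full $4$-cycle: both $A\hat{c}_1$, $\hat{c}_1B$, $B\hat{c}_2$, $\hat{c}_2A$ are edges, the diagonal $AB$ is absent because that edge was subdivided, and $\hat{c}_1\hat{c}_2$ is absent because neither triangle is a face of the other. Since every interesting $X$ satisfying \ref{t:kompleksy3} (in particular every surface with boundary, as in Corollary \ref{c:powierzchnie}) has two faces sharing an edge, your $W_N$ is never hyperbolic, and iterating the subdivision only reproduces the defect. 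This is exactly why the paper abandons barycentric subdivision and imports the special subdivision $L^{\dra}$ of \cite{Dra99} (Figure \ref{fig:podrozbiciedra}), whose entire point is Lemma \ref{l:podrozbiciedra}: a single application yields a flag no-$\square$ complex, it keeps the subdivided graph $\Gamma^{\dra}$ a \emph{full} subcomplex (which is what makes \sg-non-planarity available through Remark \ref{u:tnieplan}), and a double application yields inseparability.

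Two secondary points. First, your closing claim that the five properties ``persist under further subdivision'' is unjustified for the same reason --- flagness and no-$\square$ are not preserved by arbitrary subdivision --- and the paper avoids this by taking the iterates $K^{n\times\dra}$, $n\geq2$, to which Lemma \ref{l:podrozbiciedra} applies afresh at each stage (so that every iterate, not just the first, is flag no-$\square$, has $\Gamma^{n\times\dra}$ full, and is inseparable). Second, you correctly identify the dictionary between ``no separating pair of points of $X$'' and inseparability of $N$ as the crux, and your three-pages-along-an-edge example is apt (that nerve is a join of an edge with three points, hence virtually free with Cantor set boundary, yet no two points separate the space); but you only assert the dictionary can be established, whereas the paper pins it down as the precise combinatorial statement of Lemma \ref{l:podrozbiciedra}: if $L$ is connected, has no lonely edges and no separating pair of vertices, then $L^{\dra\dra}$ is inseparable --- the hidden separations, like the one in your example, are destroyed by the second $\dra$-subdivision. (The paper omits the routine verification of this lemma, so your level of detail there is comparable; the irreparable defect in your write-up is solely the choice of subdivision, and replacing barycentric subdivision by the $\dra$-subdivision with its three stated properties would realign your argument with the paper's proof.)
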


\begin{remark}
\begin{enumerate}[(i)]
\item Having no lonely edges implies that a point $x\in X$ is a local cut-point 
if and only if it is a vertex of $X$ whose link is not connected.
Therefore, the set of local cut-points in $X$ is discrete, so any triangulation $N$ of $X$ has no lonely edges. It follows that each separating point of $X$ is a vertex of $N$.
\item If $X$ has no separating pair, then it has no separating point.
\end{enumerate}
\end{remark}

\begin{corollary}\label{c:powierzchnie}
Let $M$ be a compact surface (possibly with boundary). Then $M$ admits a triangulation (equivalently, infinitely many triangulations) that is the nerve of a right-angled Coxeter group with Menger curve boundary if and only if the boundary $\partial M$ is non-empty and $M$ is non-planar.  	
\end{corollary}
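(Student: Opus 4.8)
The plan is to deduce the corollary directly from Theorem~\ref{t:kompleksy}, applied to $X=M$. First I would fix any triangulation of the compact (connected) surface $M$, turning it into a $2$-dimensional simplicial complex $X$ homeomorphic to $M$; since every edge of a triangulated surface lies in at least one $2$-simplex (two for an interior edge, one for a boundary edge), $X$ has no lonely edges and the hypotheses of Theorem~\ref{t:kompleksy} are met. As conditions (i) and (ii) of that theorem are equivalent, the ``equivalently, infinitely many triangulations'' clause of the corollary comes for free, and because all the properties involved depend only on the homeomorphism type of $M$, it suffices to show that condition (iii) for $X$ is equivalent to ``$\partial M\neq\emptyset$ and $M$ is non-planar''.

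Condition (iii) reads: $X$ is connected, non-planar, has no separating pair of points and $H^2X=0$. Connectedness is the standing assumption on the surface $M$, and non-planarity of $X$ is literally non-planarity of $M$, so these match two of the asserted conditions. The next step is to observe that the ``no separating pair'' clause is automatic: removing two points from a connected manifold of dimension $\geq 2$ (with or without boundary) leaves it connected, since any two remaining points can be joined by a path through a chain of Euclidean or half-space charts, each of which stays connected after deleting a single point. Thus this clause imposes no restriction on $M$.

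It remains to identify the cohomological condition, where I would argue that $H^2M=0$ if and only if $\partial M\neq\emptyset$. If $\partial M\neq\emptyset$, the compact surface $M$ deformation retracts onto a graph (its spine), hence is homotopy equivalent to a wedge of circles, so $H^nM=0$ for all $n\geq 2$; in particular $H^2M=0$. Conversely, if $\partial M=\emptyset$ then $M$ is a closed surface, whose top cohomology group is nonzero (it is $\Z$ in the orientable and $\Z/2$ in the non-orientable case, and nonzero with the $\Z/2$-coefficients used throughout in either case), so $H^2M\neq 0$. Combining the four clauses, condition (iii) holds exactly when $M$ is non-planar and $\partial M\neq\emptyset$, and Theorem~\ref{t:kompleksy} then yields the corollary.

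The main obstacle I anticipate is bookkeeping rather than depth: one must be careful about the coefficients used in $H^2$, since with rational (or real) coefficients a non-orientable closed surface has $H^2=0$, which would break the equivalence. The statement is correct precisely because the cohomology entering the dimension formula is taken with coefficients (integers or $\Z/2$) for which \emph{every} closed surface has nontrivial top cohomology. The only other point needing a (standard) argument is the connectivity of a punctured surface invoked for the ``no separating pair'' clause.
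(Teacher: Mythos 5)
Your proposal is correct and takes essentially the same route as the paper: triangulate $M$, check the hypotheses of Theorem \ref{t:kompleksy} (no lonely edges, automatic absence of separating pairs in a surface), and reduce everything to the classical equivalence $H^2M=0\Leftrightarrow\partial M\neq\emptyset$. The paper compresses these verifications into two lines, whereas you spell out the punctured-manifold connectivity and the coefficient caveat (nontrivial top cohomology of closed non-orientable surfaces holds over $\Z$ or $\Z/2$, which is what the dimension formula uses), but the substance is identical.
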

\begin{proof}{\sc (of Corollary \ref{c:powierzchnie})}
We can triangulate $M$. Such a triangulation has no local cut-points and no lonely edges. In view of Theorem \ref{t:kompleksy}, it suffices to check that $\partial M\neq\emptyset$ if and only if $H^2M=0$, but this follows from classical theorems on surfaces and manifolds. 
\end{proof}

\begin{proof}{\sc (of Theorem \ref{t:kompleksy})}
\emph{\ref{t:kompleksy2}$\Rightarrow$\ref{t:kompleksy1}.}
This implication is obvious.

\medskip	
\emph{\ref{t:kompleksy1}$\Rightarrow$\ref{t:kompleksy3}.} Non-planarity of $X$ follows by Remark \ref{u:mainkonieczne}\ref{u:mainkonieczne4}. 
The condition $H^2X=0$ follows by Remark \ref{u:mainkonieczne}\ref{u:mainkonieczne1}.
Connectedness of $X$ and the lack of separating pair of points in $X$ are necessary conditions for the existence of an inseparable triangulation of $X$, which by Remark \ref{u:mainkonieczne}\ref{u:mainkonieczne2} finishes the proof of this implication.
	
\medskip	
\emph{\ref{t:kompleksy3}$\Rightarrow$\ref{t:kompleksy2}.}
In the proof, first we find a non-planar graph that is embedded in $X$, 
and then 
we make it a part of some triangulation, 
which we use to produce
produce infinitely many ones that satisfy the assumptions of Theorem \ref{t:main}.

\smallskip
\emph{Finding the graph.} 
The space $X$ is compact, metrisable, 
connected, locally connected, non-planar and has no cut-points, therefore by \cite{Claytor34} a graph isomorphic to $\katt$ or $\kapi$ 
embeds in $X$. 
(Notice that we do not need to use such a general theorem in the case when $X$ is a less general space, e.g. a surface).
We may assume that $X$ admits a triangulation $K$ that has a 1-dimensional subcomplex $\Gamma$ which is an edge subdivision of either $\katt$ or $\kapi$.

\begin{figure}
\iflight
	\centerline{\begin{tikzpicture}
\newcommand{\capx}{2}
\newcommand{\capy}{6.7}

\newcommand{\add}{0.31}
\newcommand{\base}{(1-\add)*0.3333}

\begin{scope}[scale=0.8]

\newcommand{\dran}[3]{
\coordinate (A) at (#1);
\coordinate (B) at (#2);
\coordinate (C) at (#3);

\coordinate (D) at ($0.5*(B)+0.5*(C)$);
\coordinate (E) at ($0.5*(A)+0.5*(C)$);
\coordinate (F) at ($0.5*(A)+0.5*(B)$);

\coordinate (G) at ($\base*(A)+\base*(B)+\base*(C)+\add*(A)$);
\coordinate (H) at ($\base*(A)+\base*(B)+\base*(C)+\add*(B)$);
\coordinate (I) at ($\base*(A)+\base*(B)+\base*(C)+\add*(C)$);

\draw (A) -- (B) -- (C) -- (A) -- (G) -- (F) -- (H) -- (D) -- (I) -- (E) -- (G) -- (H) -- (B) -- (H) -- (I) -- (C) -- (I) -- (G);}

\coordinate (a) at (-5,0);
\coordinate (b) at (5,0);
\coordinate (c) at (0,8);

\coordinate (d) at ($0.5*(b)+0.5*(c)$);
\coordinate (e) at ($0.5*(a)+0.5*(c)$);
\coordinate (f) at ($0.5*(a)+0.5*(b)$);

\coordinate (g) at ($\base*(a)+\base*(b)+\base*(c)+\add*(a)$);
\coordinate (h) at ($\base*(a)+\base*(b)+\base*(c)+\add*(b)$);
\coordinate (i) at ($\base*(a)+\base*(b)+\base*(c)+\add*(c)$);

\filldraw (a) circle (2pt);
\filldraw (b) circle (2pt);
\filldraw (c) circle (2pt);
\filldraw (d) circle (2pt);
\filldraw (e) circle (2pt);
\filldraw (f) circle (2pt);
\filldraw (g) circle (2pt);
\filldraw (h) circle (2pt);
\filldraw (i) circle (2pt);

\draw[semithick] (a) -- (b) -- (c) -- (a);
\draw[thick] (a) -- (g) -- (f) -- (h) -- (d) -- (i) -- (e) -- (g) -- (h) -- (b) -- (h) -- (i) -- (c) -- (i) -- (g);

\dran{a}{g}{e};
\dran{a}{g}{f};
\dran{b}{h}{f};
\dran{b}{h}{d};
\dran{c}{i}{d};
\dran{c}{i}{e};

\dran{g}{h}{f};
\dran{h}{i}{d};
\dran{i}{g}{e};

\dran{g}{h}{i};

\path (\capx,\capy) node {\huge {$\Delta^{\dra\dra}$}};
\end{scope}

\begin{scope}[xshift=-130,yshift=85]
\begin{scope}[scale=0.4]
\coordinate (a) at (-5,0);
\coordinate (b) at (5,0);
\coordinate (c) at (0,8);

\coordinate (d) at ($0.5*(b)+0.5*(c)$);
\coordinate (e) at ($0.5*(a)+0.5*(c)$);
\coordinate (f) at ($0.5*(a)+0.5*(b)$);

\coordinate (g) at ($\base*(a)+\base*(b)+\base*(c)+\add*(a)$);
\coordinate (h) at ($\base*(a)+\base*(b)+\base*(c)+\add*(b)$);
\coordinate (i) at ($\base*(a)+\base*(b)+\base*(c)+\add*(c)$);

\filldraw (a) circle (2pt);
\filldraw (b) circle (2pt);
\filldraw (c) circle (2pt);
\filldraw (d) circle (2pt);
\filldraw (e) circle (2pt);
\filldraw (f) circle (2pt);
\filldraw (g) circle (2pt);
\filldraw (h) circle (2pt);
\filldraw (i) circle (2pt);

\draw[semithick] (a) -- (b) -- (c) -- (a);
\draw[thick] (a) -- (g) -- (f) -- (h) -- (d) -- (i) -- (e) -- (g) -- (h) -- (b) -- (h) -- (i) -- (c) -- (i) -- (g);

\path (-\capx,\capy) node {\LARGE {$\Delta^{\dra}$}};

\end{scope}
\end{scope}

\begin{scope}[xshift=-160,yshift=10]
\begin{scope}[scale=0.3]
\coordinate (a) at (-5,0);
\coordinate (b) at (5,0);
\coordinate (c) at (0,8);
\draw (a) -- (b) -- (c) -- (a);

\path (-\capx,\capy) node {\Large {$\Delta$}};

\end{scope}
\end{scope}

\end{tikzpicture}}
\else
	\includegraphics[width=0.9\textwidth]{20190717641}
\fi
\centering
\caption{The simplex $\Delta$ with its subdivisions $\Delta^{\dra}$ and $\Delta^{\dra\dra}$.}
\label{fig:podrozbiciedra}
\end{figure}

\smallskip
\emph{Final triangulation.} 
We will modify $K$ in such a way that it satisfies assumptions of Theorem \ref{t:main}.
We use a method of subdividing 2-dimensional complexes that was introduced in \cite{Dra99}. It consists in subdividing each edge into two edges and subdividing each face as in Figure \ref{fig:podrozbiciedra}. We denote by $L^{\dra}$ the result of such subdivision applied to a simplicial complex $L$. 
It has the following properties.

\begin{lemma}\label{l:podrozbiciedra}
Let $L$ be a 2-dimensional simplicial complex. Then: 
\begin{enumerate}[(i)]
\item $L^{\dra}$ is flag no-$\square$,
\label{l:podrozbiciedra1}

\item if $\Gamma$ is a 1-dimensional subcomplex of $L$, then the subcomplex $\Gamma^{\dra}$ of $L^{\dra}$
is full,
\label{l:podrozbiciedra2}

\item if $L$ is connected, has no lonely edges an no separating pair of vertices, then the complex $L^{\dra\dra}$ is inseparable. 
\label{l:podrozbiciedra3} 
\end{enumerate}	
\end{lemma}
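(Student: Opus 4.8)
The plan is to treat the three items by increasingly global arguments, all resting on a common description of the combinatorics of $L^{\dra}$ inside one subdivided face (Figure \ref{fig:podrozbiciedra}) together with the way faces are glued along subdivided edges. First I would record the local data: the nine vertices of a subdivided triangle split into three types --- \emph{old} vertices (vertices of $L$), \emph{edge} vertices (midpoints of edges of $L$), and \emph{face} vertices (the three interior points per face) --- and I would list the adjacencies and the ten $2$-simplices. The features I would extract are: every $2$-simplex of $L^{\dra}$ contains at least one face vertex; two old vertices are never adjacent and two edge vertices are never adjacent; an old vertex is adjacent to an edge vertex exactly when the latter is the midpoint of an edge of $L$ incident to it; and no vertex is adjacent to all three face vertices of a face.

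For \ref{l:podrozbiciedra1} I would verify flagness and the no-$\square$ condition by a finite case check on these types. Flagness amounts to showing every $3$-clique bounds a face and that there is no $K_4$; since every face carries a face vertex and no vertex dominates all three face vertices of a triangle, a clique is confined to (the closed star of) one or two adjacent faces, where the claim reduces to inspecting the finite list above --- in fact one checks directly that every $3$-clique is one of the ten listed triangles. For no-$\square$ I would show that every $4$-cycle has a chord: a $4$-cycle lying in a single subdivided face is chorded by the local check (the mutually adjacent face vertices supply the chords), while a $4$-cycle meeting two faces glued along an edge of $L$ must cross the shared subdivided edge at its midpoint together with an adjacent endpoint, producing a chord along that edge. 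This last point is exactly where subdividing each edge into \emph{two} pieces is used.

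Item \ref{l:podrozbiciedra2} is then almost immediate. The subcomplex $\Gamma^{\dra}$ consists only of old and edge vertices, so it contains no face vertex, and hence spans no $2$-simplex of $L^{\dra}$; fullness therefore reduces to edges. But two vertices of $\Gamma^{\dra}$ are adjacent in $L^{\dra}$ only when one is an old vertex $u$ and the other is the midpoint of an edge of $L$ at $u$, and when both lie in $\Gamma^{\dra}$ that edge is an edge of $\Gamma$, so the connecting edge already belongs to $\Gamma^{\dra}$. Thus no simplex of $L^{\dra}$ spanned by vertices of $\Gamma^{\dra}$ fails to lie in $\Gamma^{\dra}$, i.e. $\Gamma^{\dra}$ is full.

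The real work is \ref{l:podrozbiciedra3}, and I expect it to be the main obstacle. Connectedness of $L^{\dra\dra}$ is inherited from $L$. For the remaining three clauses of inseparability I would first prove a local connectivity lemma for the double subdivision: using that $L$ has no lonely edges (every edge of $L$ lies in a face), the link of each vertex of $L^{\dra\dra}$ is a connected graph, and the fine structure provides, around any single simplex, alternate paths joining the faces that meet it. This rules out a separating simplex and shows that any separation must be \emph{global}. For a separating pair of non-adjacent vertices, or a separating full subcomplex that is a suspension of a simplex, the strategy is to push the disconnection down to the space $|L|=|L^{\dra\dra}|$: a genuine disconnection of $L^{\dra\dra}\setminus K$ for such a small $K$ would exhibit either a separating point or a separating pair of points of the connected, locally connected space $|L|$, contradicting the assumption that $L$ has no separating pair of vertices (hence no separating point). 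The delicate step is classifying which full subcomplexes of $L^{\dra\dra}$ can be suspensions of simplices --- here the \emph{double} subdivision is essential, since it forces the two suspension points to be far apart and their common simplex to sit inside a single subdivided face, so that the complement cannot be walled off --- and then checking, via the local connectivity lemma, that removing such a $K$ leaves the complement connected. Organising this type analysis cleanly, rather than the individual verifications, is where the difficulty lies.
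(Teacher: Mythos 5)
A preliminary remark: the paper never proves this lemma --- it is explicitly omitted as ``a combination of well known properties and some routine combinatorial reasonings'' --- so your proposal can only be judged against what a complete routine proof would require. By that measure your parts (i) and (ii) are correct: the local inventory of a subdivided face (old, edge and face vertices, ten $2$-simplices, every $2$-simplex containing a face vertex, no vertex adjacent to all three face vertices of a face, old--old and edge--edge pairs never adjacent) is accurate, and the finite checks you describe do yield flagness, the no-$\square$ condition (you should additionally dispose of $4$-cycles passing through two faces that share only a vertex, and through three faces, but both die immediately on common-neighbour grounds), and the fullness of $\Gamma^{\dra}$ exactly as you state it.

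Part (iii), however, has a genuine gap, and it sits exactly at the point you flag as delicate. Your reduction --- a disconnection of $L^{\dra\dra}\setminus K$ exhibits a separating point or a separating pair of points of $|L|$ --- is sound only when $K$ is a pair of vertices (and even there, connectedness of vertex links does \emph{not} follow from the absence of lonely edges alone: two triangles sharing a single vertex have no lonely edges and a disconnected link at that vertex; you need the no-separating-pair hypothesis as well). For $K$ a positive-dimensional simplex or a suspension of a simplex the reduction is simply false: removing a one- or two-dimensional closed set can disconnect a space that has no separating pair of points. Concretely, let $L$ be the square, i.e.\ two triangles glued along the diagonal $ac$; then $|L|$ is a disk, so no pair of points separates it, yet already in the \emph{single} subdivision $L^{\dra}$ the full subcomplex spanned by the midpoints of $ab$ and $bc$ together with the face vertex nearest $b$ is a two-edge path (a suspension of a $0$-simplex, since the two midpoints are non-adjacent), and deleting this closed path cuts the corner at $b$ off from the rest of the disk. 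So separation by such a $K$ need not descend to $|L|$ at all, and this example is precisely why the lemma asserts inseparability only after the \emph{double} subdivision. Note also that the suspension points of a full suspension subcomplex are always at distance exactly $2$, so the second subdivision cannot ``force them far apart''; what it actually buys is that no such distance-$2$ configuration can form a crosscut (for instance, no interior vertex of $L^{\dra\dra}$ has two non-adjacent neighbours on the boundary or singular part). The same issue infects your separating-simplex step: link-connectivity rules out cut \emph{vertices}, but a closed edge or $2$-simplex is not a point, and excluding it again comes down to the crosscut analysis. A correct proof must therefore classify the full subcomplexes of $L^{\dra\dra}$ that are simplices or suspensions of simplices and exclude each by a finite type analysis in the style of your part (i) --- the same kind of case-by-case verification the paper carries out explicitly for its triangulation of $D^3$ in the proof of Theorem \ref{t:dysk} --- rather than by passage to $|L|$.
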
    

We omit the proof of Lemma \ref{l:podrozbiciedra} since it is a combination of well known properties and some routine combinatorial reasonings.
As the required infinitely many triangulations, we take $n$-fold subdivisions $K^{n\times\dra}$ for $n\geq2$ --
by Lemma \ref{l:podrozbiciedra} each of these triangulations satisfies all the assumptions of Theorem \ref{t:main} 
apart from
the cohomology condition (for \sg-non-planarity, recall Remark \ref{u:tnieplan}). The latter can be shown by the following standard reasoning using the Mayer-Vietoris sequence. 
Let $\Delta$ be any simplex of $K^{n\times\dra}$.
By the properties of simplicial complexes, there exists a neighbourhood $U$ of $\Delta$ such that the boundary $\partial U$ is homeomorphic to a 1-dimensional simplicial complex, $\Delta$ is a deformation retract of $U$, and $U\setminus\Delta$ is homotopy equivalent to $\partial U$. 
Then, considering the following part of the Mayer-Vietoris sequence: $H^2(K^{n\times\dra})\to H^2(K^{n\times\dra}\setminus\Delta)\oplus H^2(U)\to H^2(U\cap(K^{n\times\dra}\setminus\Delta))$ whose left term is 0 by assumption and the right term is 0 by the choice of $U$, we have 
$H^2(K^{n\times\dra}\setminus\Delta)=0$.
\end{proof}

\subsection{Triangulations of disks $D^n$}

In this section we prove the following theorem.

\begin{theorem}\label{t:dysk}
	The disk $D^n$ admits a triangulation that is a nerve of a right-angled Coxeter group with Menger curve boundary if and only if $n\geq3$.
\end{theorem}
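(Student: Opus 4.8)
\emph{Necessity.} First I would dispose of $n\le 2$ by planarity. Every triangulation of $D^1$ is a path, and every triangulation of $D^2$ is a finite complex realised in the plane, so in either case the nerve is a planar graph, resp.\ a planar complex. By Remark~\ref{u:mainkonieczne}\ref{u:mainkonieczne4} a right-angled Coxeter group with planar nerve has planar boundary, whereas the Menger curve has no planar open subset; hence $\bd{W_N}$ cannot be the Menger curve, and the existence of such a triangulation forces $n\ge 3$.

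\emph{Sufficiency, the cohomological reduction.} For $n\ge3$ the plan is to produce a nerve $N$ triangulating $D^n$ and to check the hypotheses of Theorem~\ref{t:main}; the delicate one is the $1$-dimensionality (cohomology) condition $\widetilde H^k(N\setminus\Delta)=0$ for $k\ge2$. The key point I would isolate is that, since $N\simeq D^n$ is contractible, a Mayer--Vietoris argument with the closed star of $\Delta$ gives $\widetilde H^k(N\setminus\Delta)\cong\widetilde H^k(\mathrm{lk}_N\Delta)$ for every simplex $\Delta$ and every $k$. In a triangulation of $D^n$ the link of an interior $d$-simplex is $S^{\,n-d-1}$ and that of a boundary $d$-simplex is a disk, so the whole cohomology condition becomes \emph{equivalent} to: $N$ has no interior simplex of dimension $\le n-3$, i.e.\ its $(n-3)$-skeleton lies on $\partial N$. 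In particular $N$ may have no interior vertices at all -- which is exactly why the Dranishnikov subdivision used in Section~\ref{c:prz} cannot be applied naively here, as it introduces barycentres in the interior whose links are spheres $S^{n-1}$ and would push $\dim\bd{W_N}$ above $1$.

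\emph{Sufficiency, the base complex.} As a model meeting this constraint I would take $N_0:=C_m*\Delta^{n-2}$, the simplicial join of an $m$-gon $C_m$ ($m\ge5$) with an $(n-2)$-simplex. Then $|N_0|=S^1*D^{n-2}=D^n$; being a join of flag complexes $N_0$ is flag; since $\Delta^{n-2}$ is a simplex (no non-adjacent pair) and $C_m$ has no induced $4$-cycle, $N_0$ contains no empty $\square$, so $W_{N_0}$ is hyperbolic; and $N_0$ is not a simplex. All vertices lie on $\partial N_0=C_m*\partial\Delta^{n-2}$, and every link $\mathrm{lk}_{N_0}(\sigma*\tau)=\mathrm{lk}_{C_m}(\sigma)*\mathrm{lk}_{\Delta^{n-2}}(\tau)$ is contractible except $\mathrm{lk}_{N_0}(\Delta^{n-2})=C_m\cong S^1$, which is $1$-dimensional. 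Hence $\widetilde H^{\ge2}$ vanishes on all links, and the cohomology condition of Theorem~\ref{t:main} holds for every $n\ge3$.

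\emph{Sufficiency, the remaining conditions and the main obstacle.} It remains to secure inseparability and $\sg$-non-planarity, and here $N_0$ by itself is inadequate: the simplex $c_i*\Delta^{n-2}$ separates $N_0$ (its link is $S^0$), and because each apex of $\Delta^{n-2}$ is joined to \emph{every} vertex of $C_m$, there is no long \emph{induced} cycle through an apex, so one cannot realise the chords of a subdivided $\kapi$ or $\katt$ as arcs $S_i\cup L_i$ that are full subcomplexes in the sense of Theorem~\ref{t:nieplan}. The plan is therefore to pass to a subdivision that breaks these universal adjacencies and thickens the complex enough to kill separating simplices, while adding new vertices \emph{only on the boundary sphere} $\partial N_0$, so that the condition ``no interior simplex of dimension $\le n-3$'', and hence $1$-dimensionality of the boundary, is preserved. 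Reconciling these two demands is precisely the main obstacle: an ordinary interior subdivision would restore inseparability and create the full subdivided non-planar subgraph needed for $\sg$-non-planarity, but at the cost of interior vertices that violate the link computation above. Once a boundary-supported subdivision $N$ is shown to be flag, no-$\square$, inseparable, and to contain a full edge-subdivision of $\kapi$ (or $\katt$) whose Hamiltonian cycle is carried by $C_m$ (cf.\ Remark~\ref{u:tnieplan}), Theorem~\ref{t:main} yields that $\bd{W_N}$ is the Menger curve; varying $m$ then gives the triangulations for every $n\ge3$.
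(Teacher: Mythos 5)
Your necessity argument ($n\le2$ via planarity of the nerve) matches the paper's. The sufficiency half, however, has two genuine problems. First, the ``cohomological reduction'' is false as stated. Your Mayer--Vietoris with the closed star is not excisive when $\dim\Delta\ge1$: the closed star $\Delta*\mathrm{lk}_N\Delta$ is not a neighbourhood of $\Delta$ in $N$, because points of $\partial\Delta$ lie in simplices containing a proper face of $\Delta$ but not $\Delta$ itself. (For a \emph{vertex} the open star is open and the identity $\widetilde H^k(N\setminus v)\cong\widetilde H^k(\mathrm{lk}_N v)$ holds; for higher-dimensional simplices it fails.) The correct statement is that $N\setminus\Delta$ deformation retracts onto the full subcomplex spanned by the vertices outside $\Delta$, and when $\Delta$ is disjoint from $\partial D^n$ a regular neighbourhood of the (collapsible) simplex $\Delta$ is an $n$-ball, so $N\setminus\Delta\simeq S^{n-1}$ \emph{regardless of} $\dim\Delta$. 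Concretely, for an interior edge with interior endpoints in a triangulation of $D^3$, your formula predicts $\widetilde H^2(N\setminus\Delta)=\widetilde H^2(S^1)=0$, whereas in fact $N\setminus\Delta\simeq S^2$. Consequently your claimed \emph{equivalence} of the cohomology hypothesis with ``the $(n-3)$-skeleton lies on $\partial N$'' is unestablished: one direction is the paper's remark following the theorem, but sufficiency is delicate (a single edge of a triangulation of $D^3$ with both endpoints on the boundary can even be knotted, \`a la Furch, and for $n\ge4$ links do not control $H^{\ge2}$ of complements at all). The paper never proves such an equivalence; it verifies the vanishing only for its specific triangulation, by a different Mayer--Vietoris — embedding $\triang$ in a larger $3$-disk $D$, removing a normal neighbourhood $U$ of $\Delta$, and using that every simplex meets $\partial\triang$ (all vertices lie on the boundary), so that $\partial\triang\setminus U$ is a surface with boundary and hence has vanishing $H^{\ge2}$.

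Second, and more fundamentally, the proof stops exactly where the theorem's content begins. You correctly diagnose that $N_0=C_m*\Delta^{n-2}$ fails inseparability and \sg-non-planarity (indeed $N_0$ is an iterated simplicial cone over $C_m$, so $\bd{W_{N_0}}$ is a circle), and you correctly identify the tension — gaining inseparability and a full subdivided $\kapi$ or $\katt$ without creating interior vertices — but your final sentence merely restates the required properties (``once a boundary-supported subdivision $N$ is shown to be flag, no-$\square$, inseparable, \dots'') without constructing $N$. That construction is the bulk of the paper's proof: the explicit six-stage triangulation $\triang$ of $D^3$ with all vertices on $\partial D^3$, together with the lengthy verifications of no-$\triangle$/no-$\square$, of inseparability (via the observations reducing separation of $\triang$ to separation of $\partial\triang$), of a full edge-subdivision of $\katt$ (giving \sg-non-planarity via Remark \ref{u:tnieplan}), and of the cohomology condition. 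You also miss the paper's inexpensive reduction of $n>3$ to $n=3$: since $W_{\Cone{N}}\cong W_N\oplus\Z_2$, one has $\bd{W_{\Cone{N}}}\cong\bd{W_N}$ and $\Cone{D^n}\cong D^{n+1}$, so the hypotheses of Theorem \ref{t:main} never need to be re-verified in higher dimensions; your per-$n$ plan would force exactly those re-verifications, which — given the failure of the link criterion — are substantially harder than your sketch suggests.
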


Note that, by the cohomology condition in Theorem \ref{t:main}, 
the desired triangulation should have all its $(n-3)$-simplicies contained in the boundary $\partial D^n$.

\begin{proof}
The 
essence
of the proof is the case $n=3$. To cover this case, we construct a triangulation of $D^3$ that satisfies the conditions of Theorem \ref{t:main}. When $n<3$, the disk $D^n$ is planar, so 
any flag triangulation of $D^n$
(viewed as a nerve) yields a right-angled Coxeter group with planar
boundary, \cite[Lemma 2.4]{Swiat16}.
On the other hand, if $N$ is a flag no-$\square$ complex, then 
so is the simplicial cone $\Cone{N}$ over $N$,
and we have $W_{\Cone{N}}\cong W_N\oplus\Z_2$.
This implies that $\bd{W_N}\cong\bd{W_{\Cone{N}}}$. Since $\Cone{D^n}\cong D^{n+1}$, in order to obtain an appropriate triangulation of $D^n$ for $n>3$ it suffices to take $n-3$ times the simplicial cone over the triangulation for $D^3$. From now on we concentrate on the case 
$n=3$.

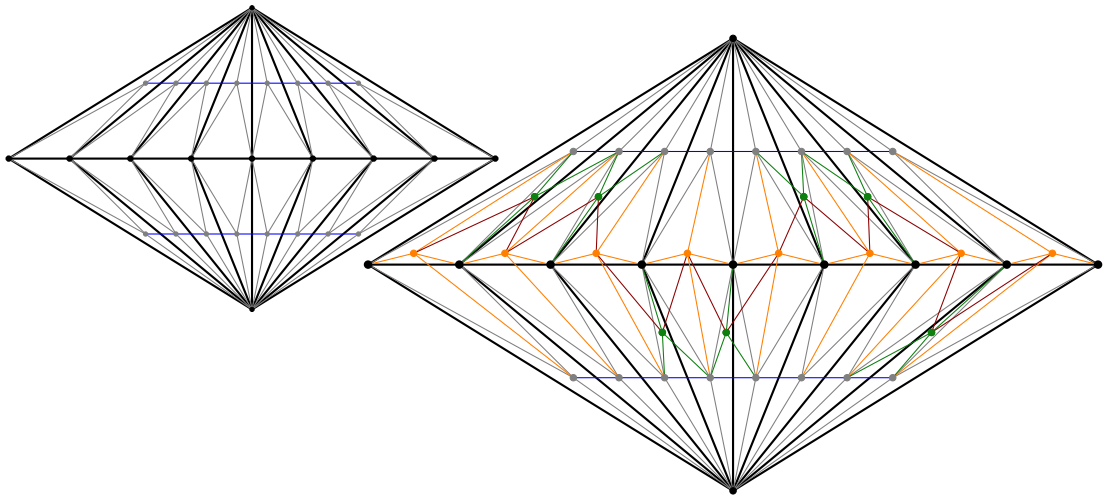
\begin{figure}
\centering
	\centerline{\begin{tikzpicture}
\begin{scope}[xshift=-180,yshift=40]
\begin{scope}[scale=0.4]
\newcommand{\h}{5}

\newcommand{\w}{2}
\foreach \x in {-4,-3,-2,-1,0,1,2,3,4}
{
	\draw[color=\cone,thick] (0,\h) -- (\x*\w,0) -- (0,-\h);
}
\draw[color=\cone,thick] (-4*\w,0) -- (4*\w,0);

\newcommand{\htwoy}{0.5}
\newcommand{\htwox}{0.5*(1-\htwoy)}
\foreach \x in {-4,-3,-2,-1,0,1,2,3}
{
	
	\draw[color=\ctwo] ($\htwox*(\x*\w,0)+\htwox*(\x*\w+\w,0)+\htwoy*(0,\h)$) -- (\x*\w,0);
	\draw[color=\ctwo] ($\htwox*(\x*\w,0)+\htwox*(\x*\w+\w,0)+\htwoy*(0,\h)$) -- (\x*\w+\w,0);
	\draw[color=\ctwo] ($\htwox*(\x*\w,0)+\htwox*(\x*\w+\w,0)+\htwoy*(0,\h)$) -- (0,\h);
	
	\draw[color=\ctwo] ($\htwox*(\x*\w,0)+\htwox*(\x*\w+\w,0)+\htwoy*(0,-\h)$) -- (\x*\w,0);
	\draw[color=\ctwo] ($\htwox*(\x*\w,0)+\htwox*(\x*\w+\w,0)+\htwoy*(0,-\h)$) -- (\x*\w+\w,0);
	\draw[color=\ctwo] ($\htwox*(\x*\w,0)+\htwox*(\x*\w+\w,0)+\htwoy*(0,-\h)$) -- (0,-\h);
}

\foreach \x in {-4,-3,-2,-1,0,1,2}
{
	\draw[color=\cthree] ($\htwox*(\x*\w,0)+\htwox*(\x*\w+\w,0)+\htwoy*(0,\h)$) -- ($\htwox*(\x*\w+\w,0)+\htwox*(\x*\w+2*\w,0)+\htwoy*(0,\h)$);
	\draw[color=\cthree] ($\htwox*(\x*\w,0)+\htwox*(\x*\w+\w,0)+\htwoy*(0,-\h)$) -- ($\htwox*(\x*\w+\w,0)+\htwox*(\x*\w+2*\w,0)+\htwoy*(0,-\h)$);
}

\filldraw[color=\cone] (0,\h) circle (2pt);
\filldraw[color=\cone] (0,-\h) circle (2pt);

\foreach \x in {-4,-3,-2,-1,0,1,2,3,4}
{
	\filldraw[color=\cone,thick] (\x*\w,0) circle (2pt);
}

\foreach \x in {-4,-3,-2,-1,0,1,2,3}
{
	\filldraw[color=\ctwo] ($\htwox*(\x*\w,0)+\htwox*(\x*\w+\w,0)+\htwoy*(0,\h)$) circle (2pt);
	\filldraw[color=\ctwo] ($\htwox*(\x*\w,0)+\htwox*(\x*\w+\w,0)+\htwoy*(0,-\h)$) circle (2pt);
}


\end{scope}
\end{scope}

\begin{scope}[scale=0.6]
\newcommand{\h}{5}

\newcommand{\w}{2}
\foreach \x in {-4,-3,-2,-1,0,1,2,3,4}
{
	\draw[color=\cone,thick] (0,\h) -- (\x*\w,0) -- (0,-\h);
}
\draw[color=\cone,thick] (-4*\w,0) -- (4*\w,0);

\newcommand{\htwoy}{0.5}
\newcommand{\htwox}{0.5*(1-\htwoy)}
\foreach \x in {-4,-3,-2,-1,0,1,2,3}
{
	
	\draw[color=\ctwo] ($\htwox*(\x*\w,0)+\htwox*(\x*\w+\w,0)+\htwoy*(0,\h)$) -- (\x*\w,0);
	\draw[color=\ctwo] ($\htwox*(\x*\w,0)+\htwox*(\x*\w+\w,0)+\htwoy*(0,\h)$) -- (\x*\w+\w,0);
	\draw[color=\ctwo] ($\htwox*(\x*\w,0)+\htwox*(\x*\w+\w,0)+\htwoy*(0,\h)$) -- (0,\h);

	\draw[color=\ctwo] ($\htwox*(\x*\w,0)+\htwox*(\x*\w+\w,0)+\htwoy*(0,-\h)$) -- (\x*\w,0);
	\draw[color=\ctwo] ($\htwox*(\x*\w,0)+\htwox*(\x*\w+\w,0)+\htwoy*(0,-\h)$) -- (\x*\w+\w,0);
	\draw[color=\ctwo] ($\htwox*(\x*\w,0)+\htwox*(\x*\w+\w,0)+\htwoy*(0,-\h)$) -- (0,-\h);
}

\foreach \x in {-4,-3,-2,-1,0,1,2}
{
	\draw[color=\cthree] ($\htwox*(\x*\w,0)+\htwox*(\x*\w+\w,0)+\htwoy*(0,\h)$) -- ($\htwox*(\x*\w+\w,0)+\htwox*(\x*\w+2*\w,0)+\htwoy*(0,\h)$);
	\draw[color=\cthree] ($\htwox*(\x*\w,0)+\htwox*(\x*\w+\w,0)+\htwoy*(0,-\h)$) -- ($\htwox*(\x*\w+\w,0)+\htwox*(\x*\w+2*\w,0)+\htwoy*(0,-\h)$);
}

\newcommand{\hfour}{0.25}
\foreach \x in {-4,-3,-2,-1,0,1,2,3}
{
	
	\draw[color=\cfour] (\x*\w,0) -- (\x*\w+0.5*\w,\hfour) -- (\x*\w+\w,0);
	\draw[color=\cfour] ($\htwox*(\x*\w,0)+\htwox*(\x*\w+\w,0)+\htwoy*(0,\h)$) -- (\x*\w+0.5*\w,\hfour) -- ($\htwox*(\x*\w,0)+\htwox*(\x*\w+\w,0)+\htwoy*(0,-\h)$);
}

\newcommand{\hfivey}{0.4}
\newcommand{\hfivex}{0.5*(1-\hfivey)}
\newcommand{\devfive}{0.15}


\coordinate (A) at ($\hfivex*\htwox*(-4*\w,0)+\hfivex*\htwox*(-4*\w+\w,0)+\hfivex*\htwoy*(0,\h)+\hfivex*\htwox*(-4*\w+\w,0)+\hfivex*\htwox*(-4*\w+2*\w,0)+\hfivex*\htwoy*(0,\h)+\hfivey*(-4*\w+\w,0)-(\devfive,0)$);
\filldraw[color=\cfive] (A) circle (2pt);

\coordinate (B) at ($\hfivex*\htwox*(-3*\w,0)+\hfivex*\htwox*(-3*\w+\w,0)+\hfivex*\htwoy*(0,\h)+\hfivex*\htwox*(-3*\w+\w,0)+\hfivex*\htwox*(-3*\w+2*\w,0)+\hfivex*\htwoy*(0,\h)+\hfivey*(-3*\w+\w,0)-(\devfive,0)$);
\filldraw[color=\cfive] (B) circle (2pt); 

\coordinate (C) at ($\hfivex*\htwox*(-2*\w,0)+\hfivex*\htwox*(-2*\w+\w,0)+\hfivex*\htwoy*(0,-\h)+\hfivex*\htwox*(-2*\w+\w,0)+\hfivex*\htwox*(-2*\w+2*\w,0)+\hfivex*\htwoy*(0,-\h)+\hfivey*(-2*\w+\w,0)-(\devfive,0)$);
\filldraw[color=\cfive] (C) circle (2pt); 

\coordinate (D) at ($\hfivex*\htwox*(-1*\w,0)+\hfivex*\htwox*(-1*\w+\w,0)+\hfivex*\htwoy*(0,-\h)+\hfivex*\htwox*(-1*\w+\w,0)+\hfivex*\htwox*(-1*\w+2*\w,0)+\hfivex*\htwoy*(0,-\h)+\hfivey*(-1*\w+\w,0)-(\devfive,0)$);
\filldraw[color=\cfive] (D) circle (2pt); 

\coordinate (E) at ($\hfivex*\htwox*(0*\w,0)+\hfivex*\htwox*(0*\w+\w,0)+\hfivex*\htwoy*(0,\h)+\hfivex*\htwox*(0*\w+\w,0)+\hfivex*\htwox*(0*\w+2*\w,0)+\hfivex*\htwoy*(0,\h)+\hfivey*(0*\w+\w,0)+(\devfive,0)$);
\filldraw[color=\cfive] (E) circle (2pt); 

\coordinate (F) at ($\hfivex*\htwox*(1*\w,0)+\hfivex*\htwox*(1*\w+\w,0)+\hfivex*\htwoy*(0,\h)+\hfivex*\htwox*(1*\w+\w,0)+\hfivex*\htwox*(1*\w+2*\w,0)+\hfivex*\htwoy*(0,\h)+\hfivey*(1*\w+\w,0)+(\devfive,0)$);
\filldraw[color=\cfive] (F) circle (2pt); 

\coordinate (G) at ($\hfivex*\htwox*(2*\w,0)+\hfivex*\htwox*(2*\w+\w,0)+\hfivex*\htwoy*(0,-\h)+\hfivex*\htwox*(2*\w+\w,0)+\hfivex*\htwox*(2*\w+2*\w,0)+\hfivex*\htwoy*(0,-\h)+\hfivey*(2*\w+\w,0)+(\devfive,0)$);
\filldraw[color=\cfive] (G) circle (2pt);

\foreach \x/\a in {-4/A,-3/B,0/E,1/F}
{
	\draw[color=\cfive] (\a) -- (\x*\w+\w,0);
	\draw[color=\cfive] (\a) -- ($\htwox*(\x*\w,0)+\htwox*(\x*\w+\w,0)+\htwoy*(0,\h)$);
	\draw[color=\cfive] (\a) -- ($\htwox*(\x*\w+\w,0)+\htwox*(\x*\w+2*\w,0)+\htwoy*(0,\h)$);
} 

\foreach \x/\a in {-2/C,-1/D,2/G}
{
	\draw[color=\cfive] (\a) -- (\x*\w+\w,0);
	\draw[color=\cfive] (\a) -- ($\htwox*(\x*\w,0)+\htwox*(\x*\w+\w,0)+\htwoy*(0,-\h)$);
	\draw[color=\cfive] (\a) -- ($\htwox*(\x*\w+\w,0)+\htwox*(\x*\w+2*\w,0)+\htwoy*(0,-\h)$);
}

\foreach \x/\a in {-4/A,-3/B,-2/C,-1/D,0/E,1/F,2/G}
{
	\draw[color=\csix] (\x*\w+0.5*\w,\hfour) -- (\a) -- (\x*\w+\w+0.5*\w,\hfour); 
}

\filldraw[color=\cone] (0,\h) circle (2pt);
\filldraw[color=\cone] (0,-\h) circle (2pt);

\foreach \x in {-4,-3,-2,-1,0,1,2,3,4}
{
	\filldraw[color=\cone,thick] (\x*\w,0) circle (2pt);
}

\foreach \x in {-4,-3,-2,-1,0,1,2,3}
{
	\filldraw[color=\ctwo] ($\htwox*(\x*\w,0)+\htwox*(\x*\w+\w,0)+\htwoy*(0,\h)$) circle (2pt);
	\filldraw[color=\ctwo] ($\htwox*(\x*\w,0)+\htwox*(\x*\w+\w,0)+\htwoy*(0,-\h)$) circle (2pt);
}


\foreach \x in {-4,-3,-2,-1,0,1,2,3}
{
	\filldraw[color=\cfour] (\x*\w+0.5*\w,\hfour) circle (2pt);
	}

\filldraw[color=\cfive] (A) circle (2pt);
\filldraw[color=\cfive] (B) circle (2pt); 
\filldraw[color=\cfive] (C) circle (2pt); 
\filldraw[color=\cfive] (D) circle (2pt); 
\filldraw[color=\cfive] (E) circle (2pt); 
\filldraw[color=\cfive] (F) circle (2pt); 
\filldraw[color=\cfive] (G) circle (2pt);

\end{scope}

\end{tikzpicture}}
\caption{\emph{Left:} 1-skeleton after stages \eone--\ethree. \emph{Right:} Final 1-skeleton. We removed the \tone{axis} from the picture and cut complex along a \tone{meridian}, removing some edges and vertices around this cutting.}
\label{fig:konstrukcja}
\end{figure}

\smallskip	
\emph{Construction.} The construction is divided into 6 stages \eone--\esix.
To each stage we associate a colour, which
will be used in the pictures. The simplices created in the stage (s$i$) will be coloured in the colour associated to this stage and will be called (s$i$)-simplices (such a convention is extended to subcomplexes). 
Note that not all the edges of a (s$i$)-simplex are necessarily (s$i$)-edges, only those that do not belong to any (s$j$)-simplex for some $j<i$.
Each stage
can be subdivided into a sequence of steps, each consisting in gluing some 3-simplex to a 3-dimensional complex $\triang$ along one or two neighbouring faces contained in the boundary $\partial\triang$. Thus, at each step $\triang$ is a triangulation of the disk $D^3$ having all of its vertices on the boundary $\partial D^3$.
We call a subcomplex $\sigma$ \emph{external} if it is contained in the boundary $\partial\triang$,  \emph{internal} otherwise.
See Figure \ref{fig:konstrukcja}. The schematic view of the complex that is present there will be used throughout the whole proof.
\begin{enumerate}
\item[\eone] 
Take a \tone{simplicial join of an 8-cycle $C_8$ 
with an edge $E$}. 
The edge $E$ is called the \tone{axis}, the cycle $C_8$ is the \tone{equator}, the endpoints of the axis are the north and south pole. The remaining part consists of 
eight 2-paths 
with endpoints in the poles; we call them meridians.
	
\item[\etwo]  
Glue to each external \eone-face a \ttwo{3-simplex} (by one of its faces).
		
\item[\ethree] 
For each pair of \etwo-3-simplices that share an edge $e$ of a \tone{meridian}, glue a \tthree{3-simplex} 
with one of its faces glued to 
one of the external faces containing $e$, 
and other face glued to the other of the external faces containing $e$.

\item[\efour] 
For each pair of external \etwo-2-simplices $\ttwo{\Delta_1},\ttwo{\Delta_2}$ that share an edge of the \tone{equator},
add a simplicial cone over $\Delta_1\cup\Delta_2$. 
	
\item[\efive]  
We glue \tfive{3-simplices} to some of the external faces of \ethree-3-simplices, that intersect the \tone{equator}, in the following way.
Denote by $\tthree{N_1},\ldots,\tthree{N_8}$ ($\tthree{S_1},\ldots,\tthree{S_8}$) the faces described above that are above (below) the \tone{equator} (in cyclic order) in such a way that the face $\tthree{N_i}$ shares an edge with the face $\tthree{S_i}$. We glue a \tfive{3-simplex} to each of the faces $\tthree{N_1},\tthree{N_2},\tthree{S_3},\tthree{S_4},\tthree{N_5},\tthree{N_6},\tthree{S_7},\tthree{S_8}$.
	
\item[\esix] 
For each pair of 3-simplices \tfour{$\Delta_4$}, \tfive{$\Delta_5$} such that $\Delta_i$ is a (s$i$)-3-simplex, if \tfour{$\Delta_4$} and \tfive{$\Delta_5$} share an \etwo-edge $e$, we glue a \tsix{3-simplex} in such a way that one of its faces is glued to the external face of the \efour-simplex \tfour{$\Delta_4$} that contains the \etwo-edge \ttwo{$e$}, and 
another
face is glued to the external face of the \efive-simplex \tfive{$\Delta_5$} that contains the \etwo-edge \ttwo{$e$}.
Note that 
the newly added edges form the pattern
\tsix{$\mathsf{{}_W\!{}^M\!{}_W\!{}^M}$}.
\end{enumerate}

Henceforth we will denote the constructed triangulation by $\triang$ and use the notions of internal and external with respect to $\triang$.
In particular, all \eone-edges are internal, all \ethree-, \efour-, \efive- and \esix-edges are external and there are both internal and external \etwo-edges.

\smallskip
\emph{Non-planarity.} 
The complex $\triang$ contains a subcomplex 
that is
an edge subdivision $\Gamma$ of $\katt$: take all \esix-edges (i.e. \tsix{$\mathsf{{}_W\!{}^M\!{}_W\!{}^M}$}), the \tone{axis}, the two external 2-paths 
that join the south pole with the middle \efour-vertices of the fragments \tsix{$\mathsf{M}$} and the two external 2-paths 
that join the north pole with the middle \efour-vertices of the fragments \tsix{$\mathsf{W}$}.
One can check that $\Gamma$ is a full subcomplex of $\triang$ and is an edge subdivision of the graph $\katt$ such that the axis of $\triang$ corresponds to one of the edges of $\katt$
and for each of the remaining edges of $\katt$ there 
is a corresponding
path of length at least 2 in $\Gamma$. 
Therefore, in view of Remark \ref{u:tnieplan}, the complex $\triang$ is \sg-non-planar.

\begin{figure}
	\centerline{\input{fighiperbolicznosc.tex}}
\centering
\caption{The four remaining cases of checking the no-$\triangle$ and no-$\square$ conditions for $\triang$. Symbols \wiea{} and \wieb{} denote the endpoints of an edge potentially belonging to some 
empty triangle or some empty square,
symbols \sasa{} and \sasb{} denote the neighbours of \wiea{} and \wieb{}, respectively. It suffices to check that each vertex denoted by both \sasa{} and \sasb{} spans a 2-simplex together with \wiea{} and \wieb{}, and that there is no edge with one endpoint marked only with \sasa{} and the other endpoint marked only with \sasb.}
\label{fig:hiperbolicznosc}
\end{figure}	

\smallskip
\emph{No-$\triangle$ and no-$\square$.} The no empty triangle condition (\emph{no-$\triangle$}) is a part of checking flagness of the complex $\triang$. It means that there is no 3-cycle 
in $\triang$ that does not span a 2-simplex.

We inductively check that after each stage \eone--\esix{} of the construction both no-$\triangle$ and no-$\square$
are satisfied. 
Stage \eone{} may be viewed as two consecutive applications of the simplicial cone operation
to
an 8-cycle. 
The 8-cycle has no $\triangle$ and no $\square$, the operation of taking a cone preserves these properties. 
Since the stages \etwo, \efive{} consist in gluing 3-simplices along single faces, they cannot introduce a $\triangle$ or a $\square$.
Stage \efour{} may be viewed as a composition of two substages.
The first consists of a sequence of gluings of 8 \efour-simplices above the equator, each along one face.
The second consists of 
gluing 8 \efour-simplices below the equator, each along two faces.
Therefore a potential new $\triangle$ or $\square$ contains some \efour-edge having an end in some of \etwo-vertices. 
Similarly, stages \ethree{} and \esix, give 3 types of edges (up to symmetry), that potentially may introduce a new
$\triangle$ or $\square$.
Figure \ref{fig:hiperbolicznosc} 
contains all 
the cases not yet considered
and its description finishes the proof of this part. 

\begin{figure}
	\centerline{\begin{tikzpicture}
\begin{scope}[xshift=-200]
\begin{scope}[scale=0.6,rotate=90]
\newcommand{\rt}{3.5}
\newcommand{\ru}{3.8}
\newcommand{\rd}{5}

\draw[thick] (0,0) circle (\rt);
\draw[thick] (0,0) circle (\rd);

\filldraw[fill=gray!70,draw=black,thick] (30:\rt) -- (160:\rt) arc (160:270:\rt) -- cycle;

\draw[thick,densely dashed] (30:\ru*1.05) -- (157:\ru) arc (157:273:\ru) -- cycle;

\path (-30:0.3*\rt) node {{\Large $\Delta$}};
\path (-30:0.75*\rt) node {{\Large $U$}};
\path (-30:1.15*\rt) node {{\Large $\triang$}};
\path (-30:1.1*\rd) node {{\Large $D$}};

\end{scope}
\end{scope}

\newcommand{\h}{5}
\newcommand{\w}{3}

\newcommand{\htwoy}{0.5}
\newcommand{\htwox}{0.5*(1-\htwoy)}

\newcommand{\hfour}{0.25}

\newcommand{\hfivey}{0.4}
\newcommand{\hfivex}{0.5*(1-\hfivey)}
\newcommand{\devfive}{0.15}

\newcommand{\ocol}{violet}

\newcommand{\colinner}{black}
\newcommand{\colouter}{black}
\newcommand{\selesn}{1mm}
\newcommand{\ampsn}{0.3mm}

\begin{scope}[scale=0.6]
\renewcommand{\w}{2.5}
\foreach \x in {-2,-1}
{
	\draw[color=\cone,thick] (0,\h) -- (\x*\w,0) -- (0,-\h);
	\draw
	[color=\colinner,decorate,decoration={crosses}] 
	(0,\h) -- (\x*\w,0);
	\draw[color=\colinner,decorate,decoration={crosses}] 
	 (\x*\w,0) -- (0,-\h);
}
\draw[color=\cone,ultra thick] (-2*\w,0) -- (-1*\w,0);
\draw[color=\cone,thick,dashed] (0,\h) -- (0,-\h);

\foreach \x in {-2}
{
	\draw[color=\ctwo] ($\htwox*(\x*\w,0)+\htwox*(\x*\w+\w,0)+\htwoy*(0,\h)$) -- (\x*\w,0);
	\draw[color=\ctwo] ($\htwox*(\x*\w,0)+\htwox*(\x*\w+\w,0)+\htwoy*(0,\h)$) -- (\x*\w+\w,0);
	\draw[color=\ctwo,dashed] ($\htwox*(\x*\w,0)+\htwox*(\x*\w+\w,0)+\htwoy*(0,\h)$) -- (0,\h);
	
	\draw[color=\ctwo] ($\htwox*(\x*\w,0)+\htwox*(\x*\w+\w,0)+\htwoy*(0,-\h)$) -- (\x*\w,0);
	\draw[color=\ctwo] ($\htwox*(\x*\w,0)+\htwox*(\x*\w+\w,0)+\htwoy*(0,-\h)$) -- (\x*\w+\w,0);
	\draw[color=\ctwo,dashed] ($\htwox*(\x*\w,0)+\htwox*(\x*\w+\w,0)+\htwoy*(0,-\h)$) -- (0,-\h);
}

\foreach \x in {-2}
{
	\draw[color=\cfour,thick] (\x*\w,0) -- (\x*\w+0.5*\w,\hfour) -- (\x*\w+\w,0);
	\draw[color=\colouter,decorate,decoration={snake,segment length=\selesn,amplitude=\ampsn}] (\x*\w,0) -- (\x*\w+0.5*\w,\hfour);
	\draw[color=\colouter,decorate,decoration={snake,segment length=\selesn,amplitude=\ampsn}] (\x*\w+0.5*\w,\hfour) -- (\x*\w+\w,0);
	\draw[color=\cfour,dashed] ($\htwox*(\x*\w,0)+\htwox*(\x*\w+\w,0)+\htwoy*(0,\h)$) -- (\x*\w+0.5*\w,\hfour) -- ($\htwox*(\x*\w,0)+\htwox*(\x*\w+\w,0)+\htwoy*(0,-\h)$);
}

\filldraw[color=\cone] (0,\h) circle (2pt);
\filldraw[color=\cone] (0,-\h) circle (2pt);

\foreach \x in {-2,-1}
{
	\filldraw[color=\cone,thick] (\x*\w,0) circle (2pt);
}

\foreach \x in {-2}
{
	\filldraw[color=\ctwo] ($\htwox*(\x*\w,0)+\htwox*(\x*\w+\w,0)+\htwoy*(0,\h)$) circle (2pt);
	\filldraw[color=\ctwo] ($\htwox*(\x*\w,0)+\htwox*(\x*\w+\w,0)+\htwoy*(0,-\h)$) circle (2pt);
}

\foreach \x in {-2}
{
	\filldraw[color=\cfour] (\x*\w+0.5*\w,\hfour) circle (2pt);
}

\end{scope}

\begin{scope}[xshift=50,yshift=20]
\begin{scope}[scale=0.75]
\draw[color=\cone,thick] (0,-\h) -- (0*\w,0);
\draw[color=\colinner,decorate,decoration={crosses}] 
 (0,-\h) -- (0*\w,0);
\draw[color=\cone,thick,dashed] (0,-\h) -- (1*\w,0);

\draw[color=\cone,thick] (0*\w,0) -- (1*\w,0);
\draw[color=\colinner,decorate,decoration={crosses}] 
 (0*\w,0) -- (1*\w,0);

\foreach \x in {-1}
{
	\draw[color=\ctwo,thick] ($\htwox*(\x*\w,0)+\htwox*(\x*\w+\w,0)+\htwoy*(0,-\h)$) -- (\x*\w+\w,0);
	\draw[color=\colinner,decorate,decoration={crosses}] 
	 ($\htwox*(\x*\w,0)+\htwox*(\x*\w+\w,0)+\htwoy*(0,-\h)$) -- (\x*\w+\w,0);
	\draw[color=\ctwo,dashed] ($\htwox*(\x*\w,0)+\htwox*(\x*\w+\w,0)+\htwoy*(0,-\h)$) -- (0,-\h);
}

\foreach \x in {0}
{
	\draw[color=\ctwo,ultra thick] ($\htwox*(\x*\w,0)+\htwox*(\x*\w+\w,0)+\htwoy*(0,-\h)$) -- (\x*\w,0);
	\draw[color=\ctwo] ($\htwox*(\x*\w,0)+\htwox*(\x*\w+\w,0)+\htwoy*(0,-\h)$) -- (\x*\w+\w,0);
	\draw[color=\ctwo,thick] ($\htwox*(\x*\w,0)+\htwox*(\x*\w+\w,0)+\htwoy*(0,-\h)$) -- (0,-\h);
	\draw[color=\colouter,decorate,decoration={snake,segment length=\selesn,amplitude=\ampsn}] ($\htwox*(\x*\w,0)+\htwox*(\x*\w+\w,0)+\htwoy*(0,-\h)$) -- (0,-\h);
}

\foreach \x in {-1}
{
	\draw[color=\cthree,thick] ($\htwox*(\x*\w,0)+\htwox*(\x*\w+\w,0)+\htwoy*(0,-\h)$) -- ($\htwox*(\x*\w+\w,0)+\htwox*(\x*\w+2*\w,0)+\htwoy*(0,-\h)$);
	\draw[color=\colouter,decorate,decoration={snake,segment length=\selesn,amplitude=\ampsn}] ($\htwox*(\x*\w,0)+\htwox*(\x*\w+\w,0)+\htwoy*(0,-\h)$) -- ($\htwox*(\x*\w+\w,0)+\htwox*(\x*\w+2*\w,0)+\htwoy*(0,-\h)$);
}

\foreach \x in {0}
{
	\draw[color=\cfour,thick] (\x*\w,0) -- (\x*\w+0.5*\w,\hfour);
	\draw[color=\colouter,decorate,decoration={snake,segment length=\selesn,amplitude=\ampsn}] (\x*\w,0) -- (\x*\w+0.5*\w,\hfour); 
	\draw[color=\cfour,dashed] (\x*\w+0.5*\w,\hfour) -- (\x*\w+\w,0);
	\draw[color=\cfour,thick] (\x*\w+0.5*\w,\hfour) -- ($\htwox*(\x*\w,0)+\htwox*(\x*\w+\w,0)+\htwoy*(0,-\h)$);
	\draw[color=\colouter,decorate,decoration={snake,segment length=\selesn,amplitude=\ampsn}] (\x*\w+0.5*\w,\hfour) -- ($\htwox*(\x*\w,0)+\htwox*(\x*\w+\w,0)+\htwoy*(0,-\h)$);
}

\coordinate (D) at ($\hfivex*\htwox*(-1*\w,0)+\hfivex*\htwox*(-1*\w+\w,0)+\hfivex*\htwoy*(0,-\h)+\hfivex*\htwox*(-1*\w+\w,0)+\hfivex*\htwox*(-1*\w+2*\w,0)+\hfivex*\htwoy*(0,-\h)+\hfivey*(-1*\w+\w,0)-(\devfive,0)$);

\foreach \x/\a in {-1/D}
{
	\draw[color=\cfive,thick] (\a) -- (\x*\w+\w,0);
	\draw[color=\colouter,decorate,decoration={snake,segment length=\selesn,amplitude=\ampsn}] (\a) -- (\x*\w+\w,0);
	\draw[color=\cfive,dashed] (\a) -- ($\htwox*(\x*\w,0)+\htwox*(\x*\w+\w,0)+\htwoy*(0,-\h)$);
	\draw[color=\cfive,thick] (\a) -- ($\htwox*(\x*\w+\w,0)+\htwox*(\x*\w+2*\w,0)+\htwoy*(0,-\h)$);
	\draw[color=\colouter,decorate,decoration={snake,segment length=\selesn,amplitude=\ampsn}] (\a) -- ($\htwox*(\x*\w+\w,0)+\htwox*(\x*\w+2*\w,0)+\htwoy*(0,-\h)$); 
}

\foreach \x/\a in {-1/D}
{
	\draw[color=\csix,dashed] (\a) -- (\x*\w+\w+0.5*\w,\hfour); 
}

\filldraw[color=\cone] (0,-\h) circle (2pt);

\foreach \x in {0,1}
{
	\filldraw[color=\cone,thick] (\x*\w,0) circle (2pt);
}

\foreach \x in {-1,0}
{
	\filldraw[color=\ctwo] ($\htwox*(\x*\w,0)+\htwox*(\x*\w+\w,0)+\htwoy*(0,-\h)$) circle (2pt);
}


\foreach \x in {0}
{
	\filldraw[color=\cfour] (\x*\w+0.5*\w,\hfour) circle (2pt);
}

\filldraw[color=\cfive] (D) circle (2pt); 

\end{scope}
\end{scope}

\end{tikzpicture}}
\centering
\caption{\emph{Left:} situation in the proof of 1-dimensionality, for simplicity drawn in dimension 2. \emph{Middle, right:} the complex $\rys{e}$ in the remaining 2 cases from the proof of inseparability with 2-complexes. In the middle $e$ is an \eone-edge, on the right $e$ is an \etwo-edge. The bold lines mark the edge $e$, dashed lines mark the edges of the graph $\gra{e}$, the line
\raisebox{0.15em}{\protect\tikz{\protect\draw (0,0) -- (0.8,0);\protect\draw[decorate,decoration={snake,segment length=1mm,amplitude=0.3mm}] (0,0) -- (0.8,0)}}	(\raisebox{0.15em}{\protect\tikz{\protect\draw (0,0) -- (0.8,0);\protect\draw[decorate,decoration={crosses}] (0,0) -- (0.8,0)}}) 
marks the edges of the 2-paths that are external (internal) in all cases.}
\label{fig:niesep}
\end{figure}

\smallskip
\emph{1-dimensionality.} 
In the remaining part of the proof we consider $\triang$ more often as a topological space than as a simplicial complex.

Let $\Delta$ be a simplex of $\triang$. 
Embed $\triang$ as a 3-disk contained in the interior of a bigger 3-disk $D$ in a standard way with respect to the piecewise linear topology.
Let $U$ be a standard open normal neighbourhood of $\Delta$ in $D$.
See Figure \ref{fig:niesep}.
Then $U$ is homeomorphic 
to
the interior of the 3-disk and $\triang\setminus U$ is 
a deformation retract
of 
$\triang\setminus\Delta$.
The Mayer-Vietoris sequence for the sets $\triang\setminus U$ and $(D\setminus\intr\triang)\setminus U$ yields an exact sequence $H^k(D\setminus U)\to H^k(\triang\setminus U)\oplus H^k((D\setminus\intr\triang)\setminus U)\to H^k(\partial\triang\setminus U)$. 
Since $U$ is contractible and contained in the interior $\intr D$, the space $D\setminus U$ is a deformation retract of a 3-disk with one point removed, therefore the space $D\setminus U$ is homotopy equivalent to the boundary $\partial D\cong S^2$.
The space $(D\setminus\intr\triang)\setminus U$ is a deformation retract of $D\setminus\intr\triang$, which is homotopy equivalent to $S^2$.
Since $U\cap\partial\triang\neq\emptyset$, the space $\partial T\setminus U$ is a 2-manifold, whose each connected component has a non-empty boundary, so it is homotopy equivalent to some 1-complex.
Putting $k=2$ we get an exact sequence $\Z\to H^2(\triang\setminus U)\oplus\Z\to0$, so $H^2(\triang\setminus U)=0$.
For $k>2$ we have zeros on both sides, therefore $H^k(\triang\setminus U)=0$.

\smallskip
\emph{Inseparability.}
It is clear that $\triang$ is inseparable by a vertex or a pair of non-adjacent vertices.
The remaining part of the proof of inseparability relies on the following observation.

\begin{observation}\label{f:niesep}
Let $\sigma$ be a full subcomplex of $\,\triang$ of dimension at least 1 that is either a simplex or a suspension of a simplex.
If $\sigma$ separates $\triang$, then it also separates its boundary $\partial\triang$. 	
\end{observation}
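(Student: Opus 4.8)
The plan is to reduce the three-dimensional separation problem to a purely combinatorial comparison of two full subcomplexes that turn out to share the same vertex set. Two inputs do all the work. The first is the feature of the construction recorded just above: every vertex of $\triang$ lies on $\partial\triang$, so in particular every vertex of $\sigma$ lies on $P:=\partial\triang$. The second is the standard deformation-retraction lemma for full subcomplexes: if $\rho$ is a full subcomplex of a simplicial complex $K$, then $|K|\setminus|\rho|$ deformation retracts onto the full subcomplex of $K$ spanned by the vertices \emph{not} in $\rho$ (radially push the barycentric weight off the $\rho$-vertices and renormalise the remaining weight; fullness guarantees that the target is a simplex of that complement and that the straight-line homotopy never enters $|\rho|$). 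In particular $|K|\setminus|\rho|$ and that complement have the same number of connected components.

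First I would extract the consequences of $V(\sigma)\subseteq V(P)$. Since $\sigma$ is full in $\triang$ and $P\subseteq\triang$, the intersection $\sigma\cap P$ is a full subcomplex of $P$ (a simplex of $P$ all of whose vertices lie in $V(\sigma)$ is a simplex of $\triang$ with the same property, hence lies in $\sigma$, hence in $\sigma\cap P$), and $|P|\setminus|\sigma|=|P|\setminus|\sigma\cap P|$. Let $L$ be the full subcomplex of $\triang$ spanned by $V:=V(\triang)\setminus V(\sigma)$, and $L_P$ the full subcomplex of $P$ spanned by $V(P)\setminus V(\sigma\cap P)$. The crucial observation is that $L$ and $L_P$ have the \emph{same} vertex set $V$: every vertex of $\triang$ outside $\sigma$ already lies on $P$, so $V(P)\setminus V(\sigma\cap P)=V$. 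Moreover $L_P=L\cap P\subseteq L$, because a simplex of $P$ with all vertices in $V$ is in particular a simplex of $\triang$ with all vertices in $V$.

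Then I would apply the lemma twice: $\triang\setminus\sigma\simeq L$ and $\partial\triang\setminus\sigma=|P|\setminus|\sigma\cap P|\simeq L_P$. Connectedness of a simplicial complex is detected by its $1$-skeleton, and $L$, $L_P$ share the vertex set $V$ while $L_P$ carries a subset of the edges of $L$. Hence any disconnection $V=V_1\sqcup V_2$ witnessing that $L$ is disconnected (both parts nonempty, no edge of $L$ across) is a fortiori a disconnection of $L_P$. Thus $L$ disconnected implies $L_P$ disconnected, which through the two homotopy equivalences is exactly the statement that if $\sigma$ separates $\triang$ then $\sigma$ separates $\partial\triang$.

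I would flag the careful points rather than a deep obstacle. The load-bearing hypotheses are precisely that $\sigma$ is \emph{full} (used both for the retraction lemma and for the fullness of $\sigma\cap P$) together with the all-vertices-on-$\partial\triang$ property of the construction; notably, the assumption that $\sigma$ is a simplex or a suspension of a simplex is not needed for this reduction and only enters in the subsequent step, where one checks that no such $\sigma$ can separate the sphere $\partial\triang\cong S^2$. The single non-formal ingredient is the deformation-retraction lemma, so the only item deserving a careful write-up is verifying that the radial retraction off $\sigma$ is well defined and continuous on $\triang\setminus\sigma$; this is routine but is where fullness is genuinely used.
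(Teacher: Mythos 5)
Your proof is correct, but it takes a genuinely different route from the paper's. The paper argues geometrically: it replaces $\sigma$ by a standard open normal neighbourhood $U$, uses that $U$ is an open $3$-disk whose frontier is connected and meets $\partial\triang$ (this is where the hypothesis that $\sigma$ is a simplex or a suspension of a simplex enters, via regular-neighbourhood theory), and shows that every point of $\triang\setminus U$ can be joined inside $\triang\setminus U$ to a point of $\partial\triang\setminus U$, by rerouting an arbitrary path towards $\partial\triang$ along the frontier $\partial U$ whenever it would cross $U$; the contrapositive is exactly the observation. You instead combine two facts: the construction keeps every vertex of $\triang$ on $\partial\triang$, and the complement of a full subcomplex deformation retracts onto the full subcomplex spanned by the complementary vertices. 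Your verifications of the side conditions are sound: $\sigma\cap\partial\triang$ is full in $\partial\triang$ because $\sigma$ is full in $\triang$, and $V(\partial\triang)=V(\triang)$ forces $L$ and $L_P$ to share the vertex set $V$, with $L_P$ a spanning subcomplex of $L$, so disconnectedness passes from $L$ down to $L_P$ trivially. This makes your argument more elementary (no PL machinery, only the standard retraction lemma, whose continuity check you correctly flag as the one point needing care) and strictly more general: you never use that $\sigma$ is a simplex or a suspension of a simplex, nor that $\triang$ is a $3$-disk --- only fullness of $\sigma$ and the all-vertices-on-the-boundary property --- so you in fact prove the conclusion for an arbitrary full subcomplex $\sigma$, as you note. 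What the paper's route buys in exchange is brevity for a reader with PL background, and stylistic continuity with the neighbourhood/retraction arguments reused immediately afterwards in Observations \ref{f:niesep2} and \ref{f:niesep3}, which genuinely do rely on the specific shapes of the separating complexes.
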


\begin{proof}

Let $U$ be a standard open normal neighbourhood of $\sigma$ in $\mathcal{T}$, so that $U$ separates $\partial\triang$ iff $\sigma$ does. It is sufficient to show that each path component of $\triang\setminus U$ intersects $\partial\triang$ non-trivially. Take a path $\gamma$ that connects (in $\mathcal{T}$) a point $x\in \triang\setminus U$ with a point in $\partial\triang$. 
If it crosses the boundary $\partial U$, then, 
since $U$ is an open 3-disk whose boundary intersects the boundary
$\partial\triang$, we can get to the boundary $\partial\triang$ by a path contained in the boundary $\partial U$.
\end{proof}

The proof of inseparability consists in checking whether the intersections of appropriate complexes with the boundary $\partial\triang$ make it disconnected.
One can immediately prove inseparability by 1-simplices and suspensions of 0-simplices.

\smallskip
\emph{Inseparability by 2-complexes, no empty $\kacz$ and no $\kapi$.}
The no empty $\kacz$ and no $\kapi$ conditions are a part of checking flagness of $\triang$.
No empty $\kacz$ means that there is no subgraph isomorphic to $\kacz$ in the 1-skeleton $\triang^{(1)}$ that does not span a 3-simplex in $\triang$. No $\kapi$ means that there is no subgraph isomorphic to $\kapi$ in the 1-skeleton $\triang^{(1)}$.

The proof of inseparability by 2-complexes uses the following observation.

\begin{observation}\label{f:niesep2}
\begin{enumerate}[(i)]
\item A 2-simplex $\Delta$ disconnects the boundary $\partial\triang$ if and only if $\Delta\cap\partial\triang=\partial\Delta$.

\item If the suspension $\sigma$ of some 1-simplex disconnects the boundary $\partial\triang$, then either some 2-simplex of the complex $\sigma$ disconnects $\partial\triang$ or $\sigma\cap\partial\triang=\partial\sigma$. 

\end{enumerate}
\end{observation}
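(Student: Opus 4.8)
The plan is to reduce the separation statement about the sphere $\partial\triang$ to a homological computation and then to read off the two alternatives from the combinatorics of the small complexes involved. Throughout I use that $\triang$ triangulates $D^3$ with all its vertices on $\partial\triang$, so $\partial\triang$ is a triangulated $2$-sphere $S^2$ and every vertex of any subcomplex is external. The basic principle I would establish first is: for a subcomplex $Y\subseteq\partial\triang$, the complement $\partial\triang\setminus Y$ is disconnected if and only if $\widetilde H^1(Y)\neq 0$. This is Alexander duality $\widetilde H_0(S^2\setminus Y)\cong\widetilde H^1(Y)$ (equivalently a Jordan-curve argument), valid since a finite simplicial complex is a compact ANR. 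As "$\sigma$ separates $\partial\triang$'' means precisely that $\partial\triang\setminus(\sigma\cap\partial\triang)$ is disconnected, the whole problem becomes detecting a nontrivial $1$-cycle in $\sigma\cap\partial\triang$ and locating it.

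For part (i): $\Delta\cap\partial\triang$ is a subcomplex of the $2$-simplex $\Delta$ containing all three of its vertices. The only such subcomplex with $\widetilde H^1\neq 0$ is the full boundary circle $\partial\Delta$, because the whole $\Delta$ is contractible while any proper subcomplex of $\partial\Delta$ containing the vertices is a forest. By the principle above this yields: $\Delta$ separates $\partial\triang$ if and only if $\Delta\cap\partial\triang=\partial\Delta$.

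For part (ii): I first note that $\sigma$ is the union of two $2$-simplices $\Delta_1=[p,a,b]$ and $\Delta_2=[q,a,b]$ glued along $[a,b]$ (the poles $p,q$ and the suspended edge $[a,b]$), a triangulated $2$-disk whose manifold boundary $\partial\sigma$ is the $4$-cycle $p\,a\,q\,b$. Assume $\sigma$ separates $\partial\triang$, i.e. $\widetilde H^1(\sigma\cap\partial\triang)\neq 0$, and split according to how many of $\Delta_1,\Delta_2$ are external. If both are external, then $\sigma\cap\partial\triang=\sigma$ is contractible, contradicting $\widetilde H^1\neq 0$, so this does not occur. If exactly one, say $\Delta_1$, is external, then $[a,b],[p,a],[p,b]$ are external and $\sigma\cap\partial\triang$ is $\Delta_1$ together with whichever of $[q,a],[q,b]$ is external; a short homotopy computation shows that $\widetilde H^1\neq 0$ forces both to be external, so $\partial\Delta_2$ is entirely external while $\Delta_2$ is internal, and part (i) makes $\Delta_2$ separate $\partial\triang$. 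If neither face is external, $\sigma\cap\partial\triang$ is a subgraph of the $1$-skeleton of $\sigma$ containing all vertices, and $\widetilde H^1\neq 0$ means it contains one of the three simple cycles $\partial\Delta_1,\partial\Delta_2,\partial\sigma$; in the first two cases part (i) again produces a separating $2$-simplex, and in the remaining case one checks the diagonal $[a,b]$ cannot be present (otherwise $\partial\Delta_1$ would reappear), forcing $\sigma\cap\partial\triang=\partial\sigma$.

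The only genuinely delicate point is the middle case, where the intersection swallows an entire external $2$-face: since that disk is contractible it contributes nothing to $\widetilde H^1$, so one must verify that the surviving homology is carried by the boundary $\partial\Delta_2$ of the \emph{internal} face, which is exactly what lets part (i) close the argument. The remaining cases are a routine translation of "contains a cycle'' into "contains a triangle or the whole boundary $4$-cycle,'' so I expect no further obstacle there.
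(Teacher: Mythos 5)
Your proposal is correct, but there is nothing in the paper to compare it against line by line: the paper states this observation without proof, treating it as routine, and only its neighbours (the observation reducing separation of $\triang$ to separation of $\partial\triang$, and the later one about $3$-simplices) receive proofs, both by elementary deformation-retract and path arguments rather than duality. Your route is a legitimate and complete substitute. The reduction of ``$\sigma$ disconnects $\partial\triang\cong S^2$'' to $\widetilde{H}^1(\sigma\cap\partial\triang)\neq 0$ via Alexander duality (Čech and simplicial cohomology agreeing on finite complexes) is sound, and you correctly exploit the construction's property that \emph{all} vertices of $\triang$ lie on $\partial\triang$, which is what guarantees the intersection contains every vertex of $\Delta$ or $\sigma$ and makes the enumeration finite. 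In (i) the enumeration is airtight: a subcomplex of $\Delta$ containing all three vertices either is all of $\Delta$ (contractible) or is a subgraph of the $3$-cycle $\partial\Delta$, and only the full cycle has nontrivial $\widetilde{H}^1$. In (ii) your three cases by externality of the faces $\Delta_1=[p,a,b]$, $\Delta_2=[q,a,b]$ exhaust everything, and the key steps check out: with exactly one face external, $\widetilde{H}^1\neq 0$ forces both $[q,a]$ and $[q,b]$ external, so the internal face $\Delta_2$ has $\Delta_2\cap\partial\triang=\partial\Delta_2$ and (i) applies; with neither face external, every simple cycle in the $1$-skeleton is one of $\partial\Delta_1$, $\partial\Delta_2$, $\partial\sigma$, and in the last case the diagonal $[a,b]$ must be absent, pinning down $\sigma\cap\partial\triang=\partial\sigma$. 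One cosmetic gloss: in the ``exactly one face external'' case the apex $q$ lies in $\sigma\cap\partial\triang$ even when neither $[q,a]$ nor $[q,b]$ does; this is harmless since an isolated vertex contributes nothing to $\widetilde{H}^1$, but it should be said. Compared with the Jordan-curve-style reasoning the paper implicitly has in mind, your duality criterion buys uniformity --- one mechanical test settles both directions of (i) and the whole disjunction of (ii) --- at the cost of invoking a heavier tool than the situation strictly requires.
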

 
If a 2-simplex $\Delta$ disconnects $\partial\triang$, then by the above observation its intersection with the boundary $\partial\triang$ consists of all of its edges. 
We highlight the one that appeared in the earliest of the stages \eone--\esix{} (in case of having several such edges, highlight an arbitrary one). 
Otherwise, if some suspension $\sigma$ of a 1-simplex $e$ disconnects $\triang$, but none of its 2-simplices does, then highlight the edge $e$.
The method of the proof
is as follows. We consider each 1-simplex $e$ of the complex $\triang$ and check that it was not highlighted in the above procedure. 
More precisely, if $e$ is internal, then we check the condition (\en1): there are no two 2-paths 
contained in the boundary $\partial\triang$ with endpoints in 
$e$ such that their middle vertices do not span an edge (recall fullness in the definition of inseparability). If $e$ is external, we check the condition (\en2): there is no 2-path $\gamma$
in $\partial\triang$
with endpoints in 
$e$ 
consisting
of edges from stages not earlier than the one where $e$ 
first appears, such that
$\gamma\cup e$ does not span an external face.

Consider a subgraph of $\triang^{(1)}$ isomorphic to $\kacz$ or $\kapi$. Highlight one of its edges $e$ that 
appears
in the earliest of the stages \eone--\esix. Note that in the case of $\kapi$ 
the complex $\triang$ contains
the 1-skeleton of a join of the edge $e$ with a 3-cycle.

Now 
for each edge $e$ we define
the complex $\rys{e}$, that will 
allow us to decide
whether $e$ has been highlighted due to one of the four above reasons.
If $e$ is an internal edge, then we define $\rys{e}$ to be the full subcomplex of $\triang$ spanned by all 2-paths 
that connect the endpoints of the edge $e$.
If $e$ is external, then $\rys{e}$ is the full subcomplex of $\triang$ spanned by all 2-paths 
that connect the endpoints of the edge $e$ and 
consist
of edges that were created in the stages 
not earlier than the stage when the edge $e$ was created.
For each edge $e$ we define the graph $\gra{e}$ as the induced subgraph of the 1-skeleton $\rys{e}^{(1)}$ spanned by the vertices that are not the endpoints of the edge $e$ (i.e. the middle points of the appropriate 2-paths).
In order to check that there are no empty $\kacz$, we check the condition (\ky1): for each edge $e'$ of the graph $\gra{e}$ the complex $\rys{e}$ has the simplicial join of $e$ with $e'$ as one of its 3-simplices. 
In order to check that there is no $\kapi$, we check the condition (\ky2): there is no 3-cycle 
in the graph $\gra{e}$.
We split 
the proof
in cases depending on 
the stage in which $e$ first appears.
Our goal is to describe $\rys{e}$ and $\gra{e}$ 
in sufficient detail,
so that one can easily check the conditions (\ky1), (\ky2) and the appropriate one of (\en1), (\en2).
\begin{enumerate}
\item[\eone] 
There are
3 cases.
If \tone{$e$} is the \tone{axis}, the complex $\rys{e}$ is the complex created in the stage \eone. 
If \tone{$e$} is an \tone{edge of a meridian}, the complex $\rys{e}$ consists of 2 \etwo-paths with their middle vertices spanning an \ethree-edge, and 3 \eone-paths (which are internal). The graph $\gra{e}$ is a 5-cycle.
The case of \tone{$e$} being an \tone{edge of the equator} is shown in Figure \ref{fig:niesep}. The path consisting of 2 \efour-edges is external and at most one of the paths consisting of \etwo-edges is external, but the middle vertices of both of these paths are neighbours of the middle vertex of the path consisting of \efour-edges. 

\item[\etwo] There are 3 cases.
If \ttwo{$e$} contains a \tone{pole} or is an external edge containing some vertex of the \tone{equator}, then $\rys{e}$ consists of two neighbouring \ethree-2-simplices or an \ethree-2-simplex and an \efour-2-simplex, respectively.
If \ttwo{$e$} contains a vertex of the \tone{equator} and is internal, see Figure \ref{fig:niesep}. 

\item[\ethree] There are 2 cases. The complex $\rys{e}$ 
is either empty
or consists of an \efive-2-simplex containing the edge \tthree{$e$}.

\item[\efour] If \tfour{$e$} is an \efour-edge with 
endpoints not on the \tone{equator},
we have 3 cases: the complex $\rys{e}$ 
is either empty,
or consists of a single \esix-2-simplex, 
or consists of two \esix-2-simplices. 
If \tfour{$e$} is an \efour-edge with one endpoint on the \tone{equator}, we have 2 cases, in each of them the complex $\rys{e}$ consists of a single \esix-2-simplex.

\item[\efive] All \efive-edges are external and there is no 2-simplex whose edges are only \efive- or 
\esix-edges, therefore the complex $\rys{e}$ 
is empty.

\item[\esix] As above, the complex $\rys{e}$ is empty.

\end{enumerate}

\smallskip
\emph{Inseparability by 3-complexes.}
The proof of this part uses the following observation that enables us to reduce the problem to the already considered case of inseparability by 2-complexes.

\begin{observation}\label{f:niesep3}
Let $\Delta$ be a 2-face of a 3-simplex $\sigma$ of $\triang$. 
Define (a subspace) $\tra{\sigma}{\Delta}:=(\sigma\cap\partial\triang)\setminus\Delta$.
Assume that the
intersection
$\overline{\tra{\sigma}{\Delta}}\cap\Delta$ is either empty or is a path.
Then: 

\begin{enumerate}[(i)]
	\item if a 3-simplex $\sigma$ disconnects the boundary $\partial\triang$, then the 2-simplex $\Delta$ disconnects the boundary $\partial\triang$, 
	\label{f:niesep31}
	
	\item if $\sigma'$ is a 3-simplex such that the complex $\sigma\cup\sigma'$ is a full subcomplex of $\triang$ that is a suspension of the face $\Delta$   
	and
	the complex $\sigma\cup\sigma'$ disconnects the boundary $\partial\triang$, then the 3-simplex $\sigma'$ disconnects the boundary $\partial\triang$.
	\label{f:niesep32}
	
\end{enumerate}	
\end{observation}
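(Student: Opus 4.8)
The plan is to reduce both claims to a single cohomological criterion on the $2$-sphere $\partial\triang$. Since all separation statements here are understood as in Observation \ref{f:niesep} (removing an open regular neighbourhood $U$, which deformation retracts onto the complex), and since $\partial\triang\cong S^2$, Alexander duality on the sphere translates disconnectedness into non-vanishing of first cohomology: for a subcomplex $A\subseteq\partial\triang$ the number of components of $\partial\triang\setminus A$ equals $1+\operatorname{rank}H^1(A)$ (the neighbourhood is homotopy equivalent to its core, and $\tilde H_0(S^2\setminus A)\cong H^1(A)$). In particular ``$A$ disconnects $\partial\triang$'' is equivalent to $H^1(A)\neq0$; this is exactly the content of Observation \ref{f:niesep2}(i), where $\Delta$ disconnects iff $\Delta\cap\partial\triang=\partial\Delta$, i.e.\ iff $H^1(\Delta\cap\partial\triang)=H^1(S^1)\neq0$. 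So I would restate (i) as $H^1(\sigma\cap\partial\triang)\neq0\Rightarrow H^1(\Delta\cap\partial\triang)\neq0$, and similarly for (ii).

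For part \ref{f:niesep31}, write $X=\sigma\cap\partial\triang$, $\Delta_\partial=\Delta\cap\partial\triang$ and $Y=\overline{\tra{\sigma}{\Delta}}$. Since $X\subseteq\partial\sigma$ and every vertex of $\sigma$ lies on $\partial\triang$, we get a decomposition into subcomplexes $X=\Delta_\partial\cup Y$ whose seam $\Delta_\partial\cap Y$ equals $\overline{\tra{\sigma}{\Delta}}\cap\Delta$, which by hypothesis is empty or an arc, hence contractible. The Mayer--Vietoris sequence for $X=\Delta_\partial\cup Y$ then gives $H^1(X)\cong H^1(\Delta_\partial)\oplus H^1(Y)$: the $H^1$ of the seam vanishes, and the connectedness of the seam makes the map $H^0(\Delta_\partial)\oplus H^0(Y)\to H^0(\text{seam})$ surjective, so no spurious class survives from the $H^0$ terms. (This is precisely where the ``path'' hypothesis is used: a seam with two or more components could glue two contractible pieces into a circle and manufacture cohomology, as already happens for a tetrahedron whose six edges are all external.) It therefore remains to prove $H^1(Y)=0$.

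The vanishing $H^1(Y)=0$ is the geometric heart of the argument. Here $Y$ is a subcomplex of the $2$-disk $D=\partial\sigma\setminus\intr\Delta$, the union of the three faces of $\sigma$ other than $\Delta$, whose boundary circle is $\partial\Delta$ and whose only interior vertex is the apex opposite $\Delta$. The three interior edges (the spokes from the apex) form a tree, so every $1$-cycle of $Y$ must traverse a boundary edge of $D$, i.e.\ an edge of $\partial\Delta$. But an edge of $\Delta$ belongs to $\overline{\tra{\sigma}{\Delta}}=Y$ only if the adjacent face of $\sigma$ is external, and in that case the whole adjacent $2$-face lies in $Y$; consequently any cycle of $Y$ bounds inside $Y$, whence $H^1(Y)=0$. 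Combining with the previous paragraph gives $H^1(X)\cong H^1(\Delta_\partial)$, so $\sigma$ disconnects $\partial\triang$ iff $\Delta$ does, which proves \ref{f:niesep31}.

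For part \ref{f:niesep32} the same machinery applies with almost no new work. Writing the suspension as $\sigma\cup\sigma'=\Delta\ast\{a,b\}$ with $\sigma\cap\sigma'=\Delta$, and setting $X'=\sigma'\cap\partial\triang$, one has $\Delta_\partial\subseteq X'$, hence $Z:=(\sigma\cup\sigma')\cap\partial\triang=X\cup X'=X'\cup Y$, and the intersection $X'\cap Y$ is again the same seam $\overline{\tra{\sigma}{\Delta}}\cap\Delta$, empty or an arc. The identical Mayer--Vietoris computation together with $H^1(Y)=0$ yields $H^1(Z)\cong H^1(X')$, so $\sigma\cup\sigma'$ disconnects $\partial\triang$ iff $\sigma'$ does, giving \ref{f:niesep32}. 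I expect the only genuinely delicate points to be the justification of $H^1(Y)=0$ (the local face/edge bookkeeping on the tetrahedron) and the observation that the connected-seam hypothesis is exactly what licenses the Mayer--Vietoris splitting; the Alexander-duality translation and the reduced/neighbourhood conventions are routine.
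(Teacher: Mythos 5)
Your proof is correct, but it runs through different machinery than the paper's. The paper's own argument is a two-line homotopy argument: it observes that $\overline{\tra{\sigma}{\Delta}}$ is the cone, with apex the vertex of $\sigma$ opposite $\Delta$, over the seam $\overline{\tra{\sigma}{\Delta}}\cap\Delta$ (this is immediate once one notes that every simplex of $\sigma$ not contained in $\Delta$ contains that opposite vertex), so when the seam is a path the cone deformation retracts onto it, giving homotopy equivalences $\partial\triang\setminus\Delta\simeq\partial\triang\setminus\sigma$ and, in case (ii), $\partial\triang\setminus\sigma\simeq\partial\triang\setminus(\sigma\cup\sigma')$; disconnection then transfers directly, with the empty-seam case handled separately since the cone is then at most a point, whose removal cannot change the number of components of a surface. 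You instead dualize: Alexander duality on $\partial\triang\cong S^2$ converts ``disconnects'' into $H^1\neq0$ of the removed subcomplex, and Mayer--Vietoris over the contractible (or empty) seam splits $H^1(\sigma\cap\partial\triang)\cong H^1(\Delta\cap\partial\triang)\oplus H^1(Y)$, with the path hypothesis entering exactly where you say it does. Your approach buys a stronger ``if and only if'' in both parts and treats (i) and (ii) by one uniform computation, at the cost of invoking duality; the paper's is more elementary and shorter. Note also that your hardest step, $H^1(Y)=0$ via the spoke-tree and homotoping cycles across external faces (which as written is slightly informal at ``consequently any cycle of $Y$ bounds''), is subsumed by the cone observation above: $Y=v\ast(\overline{\tra{\sigma}{\Delta}}\cap\Delta)$ is contractible whenever nonempty, with no hypothesis on the seam needed, and the same observation shows your decomposition $X=\Delta_\partial\cup Y$ without appealing to all vertices of $\sigma$ lying on $\partial\triang$ (true for this $\triang$, but unnecessary).
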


\begin{proof}
The complex $\overline{\tra{\sigma}{\Delta}}$ is a cone over $\overline{\tra{\sigma}{\Delta}}\cap\Delta$.
If the latter is empty, then the former is a single vertex and the observation follows.
Otherwise $\overline{\tra{\sigma}{\Delta}}$ is a deformation retract of $\overline{\tra{\sigma}{\Delta}}\cap\Delta$, which gives a homotopy equivalence between
$\partial\triang\setminus\Delta$
and $\partial\triang\setminus\sigma$ and, in case
\ref{f:niesep32},
a homotopy equivalence between $\partial\triang\setminus\sigma$ and $\partial\triang\setminus(\sigma\cup\sigma')$. 	
\end{proof}

The proof consists in assigning to each 3-simplex some of its 2-faces $\Delta_{\sigma}$ and then assigning to each internal 2-simplex $\Delta$ a 3-simplex $\sigma_{\Delta}$ in such a way that the pairs $(\sigma,\Delta_{\sigma})$ and $(\sigma_{\Delta},\Delta)$ satisfy the assumptions of Observation \ref{f:niesep3}.
Then the inseparability by 3-complexes follows in 
the following
way. If $\sigma$ is a 3-simplex of $\triang$ that disconnects the boundary $\partial\triang$, then by Observation \ref{f:niesep3}\ref{f:niesep31} 
its 2-dimensional face $\Delta_{\sigma}$ disconnects the boundary $\partial\triang$. We have 
ruled that out 
in the previous part of the proof.
If the suspension  $\tau$ of a 2-simplex $\Delta$ is a subcomplex of $\triang$ disconnecting its boundary $\partial\triang$, then the 3-simplex $\sigma_{\Delta}$ is one of simplices of $\tau$ and, denoting by $\sigma'$ the other one, 
it follows by 
Observation \ref{f:niesep3}\ref{f:niesep32} that
the 3-simplex $\sigma'$ disconnects the boundary $\partial\triang$. We have just 
ruled this out
above.
Below we choose some pairs $(\sigma,\Delta)$ as above and describe the spaces $\overline{\tra{\sigma}{\Delta}}$. 
We do this in the order given by the stage in which the 
simplex
$\sigma$ was constructed.
We leave checking that the described pairs satisfy the assumptions of Observation \ref{f:niesep3} and verifying that each internal face has been paired with some 3-simplex to the reader.

\begin{enumerate}
\item[\eone] The intersection of any \eone-3-simplex \tone{$\sigma$} with the boundary $\partial\triang$ consists of 4 points, in particular $\overline{\tra{\mone{\sigma}}{\mone{\Delta}}}$ is a single point for any \eone-face \tone{$\Delta$} of the \eone-simplex \tone{$\sigma$}.   

\item[\etwo] For any \etwo-3-simplex \ttwo{$\sigma$} and \efour-3-simplex that share an \etwo-face \ttwo{$\Delta$},
$\overline{\tra{\ttwo{\sigma}}{\ttwo{\Delta}}}$ is a single edge.

\item[\ethree] Consider an \ethree-3-simplex \tthree{$\sigma$}. Its \ethree-face \tthree{$\Delta$} that
intersects
the \tone{equator} non-trivially has the property that $\overline{\tra{\tthree{\sigma}}{\tthree{\Delta}}}$ is a single face. Consider any \etwo-face \ttwo{$\Delta'$}, which is shared by the \ethree-3-simplex \tthree{$\sigma$} and an \etwo-3-simplex. If \tthree{$\sigma$} has 2 external faces, then $\overline{\tra{\tthree{\sigma}}{\ttwo{\Delta'}}}$ 
is the
suspension of an edge. If \tthree{$\sigma$} has one external face, $\overline{\tra{\tthree{\sigma}}{\ttwo{\Delta'}}}$ is a single face.  

\item[\efour] Let \tfour{$\Delta$} be an \efour-face of an \efour-3-simplex \tfour{$\sigma$}, which \tfour{$\sigma$} shares with an \efour-3-simplex. The simplex \tfour{$\sigma$} can lie in $\triang$ in 3 ways, 
and
$\overline{\tra{\tfour{\sigma}}{\tfour{\Delta}}}$ is either a  single edge or a single face or a suspension of an edge.

\item[\efive] Let \tfive{$\sigma$} be a  \efive-3-simplex and \tthree{$\Delta$} its \ethree-face. Then $\overline{\tra{\tfive{\sigma}}{\tthree{\Delta}}}$ is a single face.
 
\item[\esix] For each \esix-3-simplex \tsix{$\sigma$} and its face $\Delta$ that is internal, 
$\overline{\tra{\tsix{\sigma}}{\Delta}}$ is a suspension of an edge. 
\end{enumerate}

\smallskip
\emph{Flagness.} Consider an $n$-clique subgraph of the 1-skeleton $\triang^{(1)}$, that does not span a simplex. We have proved that $n\geq 5$. Therefore the 1-skeleton $\triang^{(1)}$ contains a  subgraph isomorphic to $\kapi$, which has also been 
ruled out.
\end{proof}

\begin{remark}
	A slight modification of the triangulation $\triang$ gives infinitely many triangulations of the disk $D^3$ that give a Menger curve boundary.
	Indeed, fix $n\geq2$. In stage \eone{} take a join of a $4n$-cycle 
	with an edge, leave the stages \etwo--\efour{} unchanged, and execute the stages \efive{} and \esix{} so that the pattern \tsix{$\mathsf{{}_W\!{}^M}$}, that exists around the equator, repeats $n$ times (note that for $n=2$ we get $\triang$).
	The proof that such a triangulation satisfies the desired properties is almost the same as for $\triang$.
	For example, the proof of inseparability uses that fact that the equator is of length greater than 4 and 
	analyses
	the shape of some small fragments of the considered triangulation that do not depend on $n$.
	
\end{remark}

\bibliographystyle{alpha}
\bibliography{matbibDD}

\begin{thebibliography}{{A}nd58b}

\bibitem[And58a]{An58a}
R.~D. Anderson.
\newblock A characterisation of the universal curve and a proof of its
  homogeneity.
\newblock {\em Annals of Mathematics}, 67:313--324, 1958.

\bibitem[{A}nd58b]{An58b}
R.~D. {A}nderson.
\newblock One-dimensional continuous curves and a homogeneity theorem.
\newblock {\em Annals of Mathematics}, 68:1--16, 1958.

\bibitem[BH99]{BriHaeBook99}
M.~R. Bridson and A.~Haefliger.
\newblock {\em Metric spaces of non-positive curvature}.
\newblock Grundlehren der mathematischen {W}issenschaften. Springer-Verlag,
  Berlin Heidelberg, 1999.

\bibitem[BM91]{BesMes91}
M.~Bestvina and G.~Mess.
\newblock The boundary of negatively curved groups.
\newblock {\em Journal of the American Mathematical Society}, 4(3):469--481,
  1991.

\bibitem[Bow98]{Bowditch98}
B.~H. Bowditch.
\newblock Cut points and canonical splittings of hyperbolic groups.
\newblock {\em Acta Mathematica}, 180(2):145--186, 1998.

\bibitem[Bow99]{Bowditch99}
B.~H. Bowditch.
\newblock Connectedness properties of limit sets.
\newblock {\em Transactions of the American Mathematical Society},
  351(9):3673--3686, 1999.

\bibitem[Cla34]{Claytor34}
S.~Claytor.
\newblock Topological immersion of {P}eanian continua in a spherical surface.
\newblock {\em Annals of Mathematics. Second Series}, 35(4):809--835, 1934.

\bibitem[Dav08]{DavBook08}
M.~Davis.
\newblock {\em The geometry and topology of {C}oxeter groups}.
\newblock London Mathematical Society Monograph Series. Princeton University
  Press, Princeton, NJ, 2008.

\bibitem[DGP11]{DaGuP11}
F.~Dahmani, V.~Guirardel, and P.~Przytycki.
\newblock Random groups do not split.
\newblock {\em Mathematische Annalen}, 349:657–673, 2011.

\bibitem[DHW19]{DaHaWa19}
P.~Dani, M.~Haulmark, and G.~Walsh.
\newblock Right-angled coxeter groups with non-planar boundary.
\newblock ArXiv 1902.01029, 2019.

\bibitem[DJ91]{DavJan91}
M.~Davis and T.~Januszkiewicz.
\newblock Hyperbolization of polyhedra.
\newblock {\em Journal of Differential Geometry}, 34:347--388, 1991.

\bibitem[DK18]{DruKaBook18}
C.~Dru\c{t}u and M.~Kapovich.
\newblock {\em Geometric group theory}.
\newblock American Mathematical Society colloquium publications. American
  Mathematical Society, Providence, RI, 2018.

\bibitem[Dra99]{Dra99}
A.~N. Dranishnikov.
\newblock Boundaries of {C}oxeter groups and simplicial complexes with given
  links.
\newblock {\em Journal of Pure and Applied Algebra}, 137:139--151, 1999.

\bibitem[Hat01]{HatBook01}
A.~Hatcher.
\newblock {\em Algebraic Topology}.
\newblock 2001.

\bibitem[HHS19]{HaHrSa19}
M.~Haulmark, G.~C. Hruska, and B.~Sathaye.
\newblock Nonhyperbolic {C}oxeter groups with {M}enger boundary.
\newblock {\em L'Enseignement Math\'{e}matique}, 65(1-2):207--220, 2019.

\bibitem[KK00]{KaKl98}
M.~Kapovich and B.~Kleiner.
\newblock Hyperbolic groups with low-dimensional boundary.
\newblock {\em Annales scientifiques de l'\'Ecole normale sup\'erieure},
  33:647--669, 2000.

\bibitem[Mou88]{Moussong88}
G.~Moussong.
\newblock {\em Hyperbolic {C}oxeter groups}.
\newblock ProQuest LLC, Ann Arbor, MI, 1988.
\newblock Thesis (Ph.D.)--The Ohio State University.

\bibitem[MT09]{MicTsc09}
M.~Mihalik and S.~Tschantz.
\newblock Visual decompositions of {C}oxeter groups.
\newblock {\em Groups, Geometry, and Dynamics}, 3(1):173--198, 2009.

\bibitem[\'S16]{Swiat16}
J.~\'Swi\k{a}tkowski.
\newblock Hyperbolic {C}oxeter groups with {S}ierpi\'nski carpet boundary.
\newblock {\em Bulletin of the London Mathematical Society}, 48:708--716, 2016.

\bibitem[Swa96]{Swarup96}
G.~A. Swarup.
\newblock On the cut point conjecture.
\newblock {\em Electronic Research Announcements of the American Mathematical
  Society}, 2(2):98--100, 1996.

\end{thebibliography}

\end{document}